\documentclass[10pt,reqno]{amsart}

\usepackage{amsmath,amscd,amssymb,amsthm}
\usepackage{tikz-cd}
\usepackage{comment}

\makeatletter
\def\th@plain{%
  \thm@notefont{}% same as heading font
  \itshape % body font
}
\def\th@definition{% 
  \thm@notefont{}% same as heading font
  \normalfont % body font
}
\makeatother

\usepackage[pdftex,hyperindex]{hyperref}

\usepackage{enumerate}
\usepackage{todonotes}
\usepackage{color}

%\bibliography{relative.bib}

\newtheorem{proposition}{Proposition}[section]
\newtheorem{lemma}[proposition]{Lemma}
\newtheorem{theorem}[proposition]{Theorem}
\newtheorem{corollary}[proposition]{Corollary}

\theoremstyle{definition}
\newtheorem{remark}[proposition]{Remark}
\newtheorem{definition}[proposition]{Definition}
\newtheorem{example}[proposition]{Example}

\numberwithin{equation}{section} \setcounter{tocdepth}{1}

\newcommand{\N}{\mathbb{N}}
\newcommand{\R}{\mathbb{R}}
\newcommand{\C}{\mathbb{C}}

\newcommand{\pr}{\mathbb{P}}

\newcommand{\ddbar}{\partial\bar{\partial}}

\newcommand{\B}{\mathcal{B}}
\newcommand{\scL}{\mathcal{L}}

\newcommand{\scH}{\mathcal{H}}
\newcommand{\scM}{\mathcal{M}}

\newcommand{\scX}{\mathcal{X}}
\newcommand{\X}{\mathcal{X}}
\newcommand{\Y}{\mathcal{Y}}

\newcommand{\scO}{\mathcal{O}}

\newcommand{\scB}{\mathcal{B}}
\newcommand{\scY}{\mathcal{Y}}

\newcommand{\scA}{\mathcal{A}}

\newcommand{\scS}{\mathcal{S}}

\newcommand{\scP}{\mathcal{P}}

\newcommand{\co}{\Omega}
\renewcommand{\L}{\mathcal{L}}

\DeclareMathOperator{\Aut}{Aut}

\DeclareMathOperator{\Ric}{Ric}

\DeclareMathOperator{\DF}{DF}

\DeclareMathOperator{\Pic}{Pic}

\DeclareMathOperator{\tr}{tr}

\DeclareMathOperator{\Lie}{Lie}

\pagestyle{headings} \setcounter{tocdepth}{2}
\title[Relative K-stability for K\"ahler manifolds]{Relative K-stability for K\"ahler manifolds}

\author[Ruadha\'i Dervan]{Ruadha\'i Dervan}
\address{Ruadha\'i Dervan, D\'epartement de Math\'ematique, Universit\'e Libre de Bruxelles and Department of Pure Mathematics and Mathematical Statistics, University of Cambridge.}
\email{R.Dervan@dpmms.cam.ac.uk}
%\date{\today}

\begin{document}

\begin{abstract} 

We study the existence of extremal K\"ahler metrics on K\"ahler manifolds. After introducing a notion of relative K-stability for K\"ahler manifolds, we prove that K\"ahler manifolds admitting extremal K\"ahler metrics are relatively K-stable. Along the way, we prove a general $L^p$ lower bound on the Calabi functional involving test configurations and their associated numerical invariants, answering a question of Donaldson.

When the K\"ahler manifold is projective, our definition of relative K-stability is stronger than the usual definition given by Sz\'ekelyhidi. In particular our result strengthens the known results in the projective case (even for constant scalar curvature K\"ahler metrics), and rules out a well known counterexample to the ``na\"ive'' version of the Yau-Tian-Donaldson conjecture in this setting. 

\end{abstract}

\maketitle

\section{Introduction}

In 1982, Calabi posed the problem of finding an extremal metric in a given K\"ahler class on a K\"ahler manifold \cite{EC}. Extremal metrics are K\"ahler metrics $\omega$ whose scalar curvature $S(\omega)$ satisfies $$\bar\partial \nabla^{1,0}S(\omega) = 0,$$ i.e. the $(1,0)$-part of the gradient of $S(\omega)$ is a holomorphic vector field. When they exist, extremal metrics give a \emph{canonical} choice of K\"ahler metric in their class. However, a given K\"ahler manifold may not admit an extremal metric in certain K\"ahler classes \cite{ACGTF}, or even in any K\"ahler class at all \cite{MC}. A fundamental question in K\"ahler geometry is therefore to characterise the K\"ahler manifolds which admit extremal metrics.

An important special case of extremal metrics are constant scalar curvature K\"ahler (cscK) metrics, and hence K\"ahler-Einstein metrics. Suppose now that the K\"ahler manifold is a smooth projective variety $X$, and the K\"ahler class is the first Chern class of an ample line bundle $L$. In this case, a deep conjecture of Yau-Tian-Donaldson states that $(X,L)$ admits a cscK metric if and only if it satisfies an algebro-geometric condition called \emph{K-stability} \cite{STY,GT,SD2}. This conjecture was extended to the setting of extremal K\"ahler metrics by Sz\'ekelyhidi, who conjectured that $(X,L)$ admits an extremal K\"ahler metric if and only if it is \emph{relatively K-stable} \cite{GS}. The importance of K-stability has been underlined by Chen-Donaldson-Sun's proof that K-stability is equivalent to the existence of a K\"ahler-Einstein metric on Fano manifolds \cite{CDS}. 

One direction of these conjectures is now essentially proven, namely the existence of a cscK (resp. extremal) metric  on a projective variety implies K-stability \cite{JS,SD,BDL,CS} (resp. relative K-stability \cite{StSz}). 

In \cite{DR}, we defined a notion of K-stability for K\"ahler manifolds, and proved:

\begin{theorem}\label{introdiscrete}\cite{DR} Suppose $(X,[\omega])$ is a K\"ahler manifold with discrete automorphism group. If $(X,[\omega])$ admits an cscK metric, then it is K-stable.\end{theorem}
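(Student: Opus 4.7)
The plan is to adapt the variational approach to K-stability from the polarized setting (Stoppa, Berman, Chen--Sun, Berman--Darvas--Lu) to the cohomological framework for K\"ahler test configurations introduced in \cite{DR}. The strategy has three steps: (i) associate a weak geodesic ray in the space of K\"ahler potentials to any test configuration; (ii) prove a slope inequality bounding the asymptotic slope of the Mabuchi functional by the Donaldson--Futaki invariant; (iii) combine this with properness of the Mabuchi functional, a consequence of the cscK hypothesis together with the discreteness of $\Aut(X,[\omega])$.

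For step (i), given a smooth K\"ahler test configuration $(\X,[\co])$ and a cscK potential $\varphi_0 \in \scH(\omega)$, I would solve the homogeneous complex Monge--Amp\`ere equation on $\X$ restricted to a punctured disk, using the relative K\"ahler form as reference, to obtain a weak $C^{1,1}$ subgeodesic ray $\varphi_t$ emanating from $\varphi_0$. For step (ii), I would express $\DF(\X,[\co])$ as an intersection number on a smooth compactification $\overline{\X}/\pr^1$ following \cite{DR}, and compare it with Berman--Berndtsson's intersection-theoretic formula for the asymptotic slope of the Mabuchi functional along weak geodesic rays. This should yield
\[
\limsup_{t \to \infty} \frac{M(\varphi_t)}{t} \leq \DF(\X,[\co]).
\]

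Step (iii) exploits that the existence of a cscK metric, together with $\Aut(X,[\omega])$ discrete, implies that the Mabuchi functional $M$ is proper on $\scH(\omega)$ modulo constants (the Berman--Darvas--Lu resolution of a conjecture of Tian, whose transcendental version is available in the K\"ahler setting). In particular $M(\varphi_t)$ is bounded below along the ray, so the slope inequality forces $\DF(\X,[\co]) \geq 0$, yielding K-semistability. To promote semistability to stability, I would invoke Stoppa's blow-up trick: given a non-product test configuration with vanishing Donaldson--Futaki invariant, blow up a generic point of $X$ to produce a test configuration for the blow-up with strictly negative Donaldson--Futaki invariant; since the blow-up still admits a cscK metric in a nearby class (Arezzo--Pacard) and has discrete automorphisms, this contradicts semistability in that class.

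The main obstacle will be step (i): without a global polarization, the standard constructions of geodesic rays via Bergman-kernel approximation or Phong--Sturm $\mathbb{C}^*$-orbits in a projective embedding are unavailable, so one must work transcendentally and solve the Cauchy--Dirichlet problem for the homogeneous Monge--Amp\`ere equation on a non-projective total space with only relative K\"ahler data. A secondary difficulty is that the slope formula in step (ii) must hold for test configurations with possibly singular central fibre; handling this requires a resolution argument and careful control of the Donaldson--Futaki invariant under birational modifications of $\X$, of the type developed in \cite{DR}.
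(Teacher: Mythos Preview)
Your proposal is essentially correct and matches the strategy of \cite{DR} (from which the theorem is quoted; the present paper does not reprove it in full but recalls the ingredients and offers a variant). The architecture --- construct a ray from the test configuration, establish the slope inequality for the Mabuchi functional (Theorem~\ref{Mabuchi-expansion} here, due to \cite{ZSD,DR,BHJ2}), deduce semistability from a lower bound on $\scM$, and upgrade to stability via the Stoppa blow-up combined with Arezzo--Pacard --- is exactly the one used.

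Two small remarks. First, in step~(iii) you invoke properness of the Mabuchi functional via Berman--Darvas--Lu; this is stronger than needed. For semistability it suffices that $\scM$ is bounded below, which follows already from Berman--Berndtsson convexity along $C^{1,\bar 1}$ geodesics together with the fact that a cscK potential is a critical point (hence a global minimum) of $\scM$. The original argument in \cite{DR} does not rely on properness. Second, the present paper gives a slightly different route to the semistability half: Theorem~\ref{introcalabi} with $p=q=2$ shows $\|S(\omega)-\hat S\|_2 \geq -\DF(\X,\scA)/\|(\X,\scA)\|_2$, and a cscK metric makes the left-hand side vanish; the Stoppa blow-up step is then identical. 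Your geodesic/slope approach and the Calabi-functional approach are closely related (both rest on Berman--Berndtsson convexity), but the latter bypasses any appeal to properness or even boundedness of $\scM$ and yields the more general inequality of Theorem~\ref{introcalabi} as a bonus.
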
 \noindent This was independently proven by Sj\"ostr\"om Dyrefelt \cite{ZSD}.

In the present article, we define a notion of relative K-stability for K\"ahler manifolds using K\"ahler techniques. Our main result is:

\begin{theorem}\label{intromaintheorem} If $(X,[\omega])$ admits an extremal metric, then it is relatively K-stable.\end{theorem}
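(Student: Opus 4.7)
The plan is to extend to the extremal setting the strategy used in \cite{DR} for the cscK case, which in turn follows Donaldson's approach from the projective setting. The central analytic input, highlighted in the abstract, is a general $L^p$ lower bound on the Calabi functional in terms of numerical invariants of test configurations, proved here in the relative form required by the presence of the extremal vector field.

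First I would fix a maximal compact torus $T\subset\Aut(X,[\omega])$ whose Lie algebra contains the extremal vector field $V$ of the given extremal metric $\omega_e$, and formulate relative K-stability, following Sz\'ekelyhidi's projective definition, using $T$-equivariant K\"ahler test configurations $(\X,\Omega)$ and the \emph{relative Donaldson-Futaki invariant}
\[
\DF_{\mathrm{rel}}(\X,\Omega)=\DF(\X,\Omega)-\frac{\langle\X,V\rangle}{\|V\|^2}\,\DF(V),
\]
where $\langle\cdot,\cdot\rangle$ is the Futaki-type bilinear pairing between the $\C^*$-action generated by $\X$ and elements of $\Lie T$. Relative K-stability then demands $\DF_{\mathrm{rel}}(\X,\Omega)\geq 0$ for every such test configuration, with equality only for products.

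The key step is the relative $L^p$ lower bound on the Calabi functional:
\[
\bigl\|S(\omega)-\bar S-\theta_V\bigr\|_{L^p(\omega^n)}\;\geq\;-\frac{\DF_{\mathrm{rel}}(\X,\Omega)}{\|\X\|_{\mathrm{rel},q}}
\]
for every $T$-invariant K\"ahler metric $\omega\in[\omega]$ and every $T$-equivariant test configuration, with $\tfrac1p+\tfrac1q=1$, where $\theta_V$ is the holomorphy potential for $V$ and $\|\X\|_{\mathrm{rel},q}$ is a suitable relative $L^q$ norm obtained by subtracting the $T$-equivariant projection from the $L^q$ norm of the induced $\C^*$-action. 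Specialising to $\omega=\omega_e$, the extremal condition gives $S(\omega_e)-\bar S=\theta_V$, so the left-hand side vanishes and one obtains $\DF_{\mathrm{rel}}(\X,\Omega)\geq 0$. Strict positivity for non-product configurations, if included in the notion of relative K-stability, should follow by the standard perturbation argument of Stoppa-Sz\'ekelyhidi.

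To prove the lower bound I would, given $(\X,\Omega)$, construct a weak subgeodesic ray $\{\omega_t\}_{t\geq 0}$ of $T$-invariant K\"ahler metrics in $[\omega]$ whose asymptotic behaviour is prescribed by $(\X,\Omega)$, using the pluripotential and intersection-theoretic framework developed by Sj\"ostr\"om Dyrefelt and already exploited in \cite{DR}. A Chen-Tian type identity should identify the asymptotic slope at infinity of the \emph{relative Mabuchi functional} $\mathcal M_{\mathrm{rel}}$ along the ray with $-\DF_{\mathrm{rel}}(\X,\Omega)$, and H\"older's inequality applied to
\[
\tfrac{d}{dt}\mathcal M_{\mathrm{rel}}(\omega_t)=-\int_X\bigl(S(\omega_t)-\bar S-\theta_{V,t}\bigr)\,\dot\varphi_t\,\omega_t^n,
\]
combined with convexity of $\mathcal M_{\mathrm{rel}}$ along the ray, yields the required estimate. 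The main difficulty is analytic: producing an approximating ray with enough regularity to justify both the Chen-Tian slope computation and the H\"older bound, equivariantly with respect to $T$, and arranging the relative norm so that the inequality is sharp precisely at the extremal metric. The extension of the projective-case regularity results of Phong-Sturm to K\"ahler test configurations, together with the relative refinement of the intersection-theoretic calculus of \cite{DR}, is where I expect the essential technical work to lie.
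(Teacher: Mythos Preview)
Your outline conflates two logically separate halves of the argument and misidentifies where the real difficulty lies.

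For the \emph{semistability} step, the paper does \emph{not} prove a relative lower bound of the form
\[
\|S(\omega)-\hat S-\theta_V\|_p\geq -\frac{\DF_{\mathrm{rel}}(\X,\scA)}{\|(\X,\scA)\|_{\mathrm{rel},q}}.
\]
Instead it proves only the \emph{absolute} bound $\|S(\omega)-\hat S\|_2\geq -\DF(\X,\scA)/\|(\X,\scA)\|_2$ (Theorem~\ref{introcalabi}), and then recovers $\DF_T\geq 0$ by the Stoppa--Sz\'ekelyhidi twisting trick: assume $\langle\alpha,\chi\rangle=0$ and $\DF<0$, replace $\alpha$ by $\lambda\alpha-\chi$, and derive a contradiction using $\|S(\omega_e)-\hat S\|_2=\|\chi\|_2$. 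This requires checking that the Donaldson--Futaki invariant and the inner products behave additively under twisting by possibly \emph{irrational} $\beta\in\mathfrak t$ (Propositions~\ref{propa}, \ref{norm-independence}, Lemma~\ref{propb}). Your proposed relative inequality may well be provable, but it is not what the paper does, and it introduces an object $\|\X\|_{\mathrm{rel},q}$ whose definition and asymptotic relation to the geodesic you leave vague; the paper's route sidesteps this entirely.

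The more serious gap is in your treatment of \emph{strict} stability. You write that it ``should follow by the standard perturbation argument of Stoppa--Sz\'ekelyhidi'', but in the K\"ahler setting this is precisely where the paper's new work concentrates. The projective argument finds a torus-invariant destabilising point via finite-dimensional GIT, which is unavailable here; the paper instead proves Proposition~\ref{destabilisingpoint} by a direct hamiltonian argument. One then blows up along the orbit closure $C$, but since $\X$ and $C$ need not be smooth one must pass to an equivariant log resolution, control the Donaldson--Futaki invariant, inner products, and Chow weight under both the resolution and the blowup (Propositions~\ref{DFcomparison}, \ref{norm-ip-pert}), and finally invoke Arezzo--Pacard--Singer (Theorem~\ref{APS}) to reach a contradiction. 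None of this is routine in the K\"ahler category, and your proposal does not address it.
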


Strictly speaking, our definition of relative K-stability should be called ``K-polystability relative to a maximal torus''. The following is therefore an immediate corollary.

\begin{corollary}\label{introcor} If $(X,[\omega])$ admits an cscK metric, then it is equivariantly K-polystable. \end{corollary}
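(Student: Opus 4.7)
The plan is to derive equivariant K-polystability directly from Theorem \ref{intromaintheorem}. The first observation is that a cscK metric is automatically extremal: since the scalar curvature $S(\omega)$ is constant, its gradient $\nabla^{1,0}S(\omega)$ vanishes identically, so trivially $\bar\partial\nabla^{1,0}S(\omega) = 0$. Therefore Theorem \ref{intromaintheorem} applies and $(X,[\omega])$ is relatively K-stable in the sense specified in the paper, namely K-polystable relative to a maximal torus $T \subset \Aut(X,[\omega])$.

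Next I would analyse what "relative" means when $\omega$ is cscK. On any $T$-equivariant test configuration $\X$, the relative Donaldson--Futaki invariant should take the form $\DF_T(\X) = \DF(\X) - \langle \chi, \mu(\X)\rangle$, where $\chi \in \Lie(T)$ is the extremal vector field of $([\omega], T)$ and $\mu(\X)$ records the $T$-weight data of $\X$ in the pairing dual to $\chi$. The extremal vector field is characterised numerically as the $\Lie(T)$-component of $S(\omega) - \underline{S}$ under the natural $L^2$ pairing, hence it vanishes whenever $[\omega]$ admits a cscK representative. With $\chi = 0$ the correction term disappears and $\DF_T(\X) = \DF(\X)$ for every $T$-equivariant test configuration.

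Combining the two observations, K-polystability relative to a maximal torus in the cscK case reduces to ordinary K-polystability among $T$-equivariant test configurations, which is exactly the definition of equivariant K-polystability. The only step requiring genuine verification -- and the place where one must be careful -- is the identification of the extremal vector field as an invariant purely of $([\omega], T)$ that vanishes on cscK classes, together with the confirmation that the relative invariant defined in the body of the paper truly reduces to $\DF$ under $\chi = 0$. Both are normalisation issues rather than substantive obstacles; once they are in place, the corollary follows immediately from Theorem \ref{intromaintheorem}, as advertised.
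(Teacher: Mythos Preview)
Your approach is correct and essentially the same as the paper's. The paper's proof is slightly more direct: it simply observes that a cscK metric forces $F(\beta_i)=0$ for every $\beta_i$, so the correction terms in the definition $\DF_T(\X,\scA)=\DF(\X,\scA)-\sum_i \frac{\langle\alpha,\beta_i\rangle}{\langle\beta_i,\beta_i\rangle}F(\beta_i)$ vanish and the extra condition in the definition of equivariant K-polystability is satisfied; your route via the extremal vector field $\chi$ (whose vanishing is equivalent to $F(\beta_i)=0$ for all $i$) is a standard reformulation of the same fact.
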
 \noindent This extends the results of \cite{DR,ZSD} to the setting of K\"ahler manifolds admitting automorphisms. 

Perhaps the most important aspect of Theorem \ref{intromaintheorem} (and Corollary \ref{introcor}) is that when our K\"ahler manifold is projective, our definition of relative K-stability is \emph{stronger} than the definition given by Sz\'ekelyhidi \cite{GS}. Our results therefore strength\-en the previously known results in the projective case, both for extremal metrics and cscK metrics. Roughly speaking, K-stability of projective varieties involves a set of auxiliary varieties $\scX$, called test configurations, together with line bundles $\scL$. A very influential example of \cite{ACGTF} strongly suggests that for relative K-stability to be equivalent to the existence of an extremal metric, one needs to allow test configurations together with \emph{irrational} line bundles, that is, formal tensor powers of line bundles with $\R$-coefficients. As the first Chern class of such an object makes sense (as the sum of the first Chern classes of the combination), our K\"ahler theory of relative K-stability naturally incorporates these objects. In particular, the example of \cite{ACGTF} is \emph{not} relatively K-stable in our K\"ahler sense. This is the first time this example has been ruled out (it would also be ruled out by relative analogues of other stronger notions of K-stability \cite{GS2,RD,BHJ1}, however it appears to be a very challenging problem to prove that the existence of an extremal metric implies these notions). Although our stronger K\"ahler notion of relative K-stability rules out the phenomenon described in \cite{ACGTF}, it may perhaps be too optimistic to conjecture that it implies the existence of an extremal metric; we discuss this further in Remark \ref{YTD-remark}.

A key part of our proof of Theorem \ref{intromaintheorem} is to prove the following generalisation of Donaldson's lower bound on the Calabi functional \cite{SD}, which new for general $p$ even when $X$ is projective:

\begin{theorem}\label{introcalabi} We have \[\inf_{\omega\in [\omega]} \|S(\omega)-\hat S\|_p \geq -\sup_{(\X,\scA)\in \mathbb{T}} \frac{\DF(\X,\scA)}{\|(\X,\scA)\|_q}.\]\end{theorem}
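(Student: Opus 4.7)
The plan is to adapt Donaldson's $L^2$ slope argument \cite{SD} to general $p$ and to the K\"ahler (non-projective) setting. The conceptual backbone is that each test configuration $(\X,\scA) \in \mathbb{T}$ should give rise to a (weak) geodesic ray $\omega_t = \omega_0 + \ddbar\phi_t$ issuing from any chosen smooth representative $\omega_0 \in [\omega]$, along which the Mabuchi K-energy $M$ has asymptotic slope $\DF(\X,\scA)$ and the velocity $\dot\phi_t$ has $L^q$ norm equal to $\|(\X,\scA)\|_q$. One can then combine these facts with the derivative identity
\[ \frac{dM}{dt}(\omega_t) = -\int_X \dot\phi_t\bigl(S(\omega_t)-\hat S\bigr)\,\omega_t^n \]
and H\"older's inequality to produce the stated bound.

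Concretely, I would proceed in four steps. First, construct a smooth subgeodesic ray from $\omega_0$ associated to $(\X,\scA)$; in the projective case this is the Phong--Sturm construction, but in the K\"ahler case one uses the analytic machinery for producing relative K\"ahler forms on test configurations (as developed in \cite{DR}), extended to allow the irrational $\scA$ needed in $\mathbb{T}$. Second, establish the slope formula $\lim_{t\to\infty} M(\omega_t)/t = \DF(\X,\scA)$, which identifies the K\"ahler Donaldson--Futaki invariant with the Mabuchi asymptotic slope; this should follow from a CM-line bundle computation on the relative compactification together with the Chern--Weil type formulas of Sj\"ostr\"om Dyrefelt \cite{ZSD}. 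Third, use convexity of $M$ along the subgeodesic to bound the initial derivative by the slope, and apply H\"older:
\[ \left|\int_X \dot\phi_0 \bigl(S(\omega_0)-\hat S\bigr)\omega_0^n\right| \leq \|\dot\phi_0\|_{L^q(\omega_0^n)} \cdot \|S(\omega_0)-\hat S\|_{L^p(\omega_0^n)}. \]
Fourth, identify the analytic quantity $\|\dot\phi_0\|_{L^q(\omega_0^n)}$ with the algebraically-defined $\|(\X,\scA)\|_q$, and rearrange. Chaining these estimates yields an inequality of the form $-\DF(\X,\scA) \leq \|(\X,\scA)\|_q \cdot \|S(\omega_0)-\hat S\|_p$ for every smooth $\omega_0$ and every $(\X,\scA)\in \mathbb{T}$, from which the infimum/supremum statement follows directly.

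The main obstacle, and the step that is genuinely new beyond \cite{SD}, is the identification of the analytic velocity norm with the algebraic norm $\|(\X,\scA)\|_q$ for all $q \in (1,\infty]$. In the $L^2$/projective case Donaldson used a Hermitian matrix identity between the Bergman kernel and the weights of a $\C^*$-action; for arbitrary $q$ and for K\"ahler classes with irrational (hence non-line-bundle) coefficients, no such algebraic shortcut is available. I expect this step to require defining $\|(\X,\scA)\|_q$ as an explicit intersection-theoretic pairing on the relative compactification $\bar\X$ involving the maximum and minimum weights in a continuous sense, and then establishing an asymptotic equidistribution result matching this pairing to the $L^q(\omega_0^n)$ norm of $\dot\phi_0$. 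A secondary, regularity-type difficulty is that weak geodesic rays coming from singular test configurations have only $C^{1,\bar 1}$ regularity, so convexity of $M$ and the derivative formula must be justified via Berndtsson positivity together with a smoothing argument.
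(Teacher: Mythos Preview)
Your overall architecture matches the paper's: H\"older plus convexity of the Mabuchi functional along a ray, the slope identification with $\DF$, and the norm identification with $\|(\X,\scA)\|_q$. You are also right that the norm identification is the genuinely new step. However, there are two real gaps and one substantive methodological divergence.

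First, in step 1 you propose a \emph{smooth subgeodesic} and in step 3 you invoke convexity of $M$ along it. Berman--Berndtsson convexity holds along the genuine $C^{1,\bar 1}$ geodesic, not along arbitrary smooth subgeodesics; along a smooth path induced by a fixed relatively K\"ahler metric on $\X$ the Mabuchi functional need not be convex. The paper works with the actual geodesic throughout and uses its $C^{1,\bar 1}$ regularity on $\X\setminus\X_0$. Second, you do not address how to pass from smooth $\X$ with reduced central fibre to arbitrary normal $\X$. The Mabuchi slope along the geodesic is $M^{NA}(\X,\scA)$, which equals $\DF(\X,\scA)$ only when $\X_0$ is reduced; and the geodesic regularity and norm identification are only established when $\X$ is smooth. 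The paper closes both gaps by a perturbation argument: resolve $\X$, perturb the class by $-\delta[E]$, and check that both $\DF$ and $\|\cdot\|_q$ change by $O(\delta)$; separately, perform a base change (semi-stable reduction) to make $\X_0$ reduced and track how both numerator and denominator scale by the degree $d$. Without this, your inequality is only proved over the subset of smooth test configurations with reduced central fibre.

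On the norm identification itself, your proposed route via an intersection-theoretic definition plus equidistribution is not what the paper does and is left as an open expectation there. The paper's argument is more direct: for any $C^1$ function $f$, the integral $\int_X f(h_\omega)\omega^n$ is independent of $\omega\in[\omega]$ (a one-line computation on smooth compact K\"ahler manifolds), hence on a resolution with simple normal crossings central fibre the $L^q$-norm computed with the geodesic form agrees with that computed with any smooth form in $\scA$; combined with Berndtsson's lemma that the $L^q$-norm of a geodesic is constant in $t$, this gives $\|\varphi_t\|_q = \|(\X,\scA)\|_q$ without any equidistribution.
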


\noindent Here $\mathbb{T}$ denotes the set of test configurations for $(X,[\omega])$, $(p,q)$ is an arbitrary H\"older conjugate pair, $\hat S$ is the average scalar curvature of $\omega$ and $\DF(\X,\scA)$ (resp. ${\|(\X,\scA)\|_q}$) denotes an important numerical invariant called the Donaldson-Futaki invariant of the test configuration $(\X,\scA)$ (resp. the $L^q$-norm of  $(\X,\scA)$). Donaldson proved the above result when $X$ is projective and $[\omega] = c_1(L)$ for an ample line bundle $L$, provided $q$ is an even integer \cite{SD}. Donaldson also asked whether the above result holds for general H\"older conjugate pairs $(p,q)$; this answers his question. When $(X,[\omega])$ admits a cscK metric, this implies $(X,[\omega])$ is K-semistable, giving a slightly different proof of the main results of \cite{DR,ZSD}. However, the main interest in Theorem \ref{introcalabi} is in the case that $(X,[\omega])$ does \emph{not} admit a cscK metric. We conjecture equality holds in Theorem \ref{introcalabi} when $p=q=2$, by analogy with Donaldson's conjecture in the projective case \cite{SD}. Remark that equality does not hold for other $(p,q)$ even when $(X,[\omega])$ admits an extremal metric that is not cscK.

Analogues of Theorem \ref{introcalabi} can be proven using similar methods for twisted cscK metrics and the J-flow. Here one replaces the left hand side with $\|S(\omega) - \Lambda_{\alpha}\omega -c_1\|_p$ for twisted cscK metrics, and with $\|\Lambda_{\alpha}\omega -c_2\|_p$ for the J-flow (where $c_1,c_2$ are the appropriate topological constants and $\alpha$ is an auxiliary K\"ahler metric in an arbitrary K\"ahler class), and replaces the right hand side with the corresponding numerical invariants \cite{RD,LS} (for the J-flow one should use the numerical invariants as formulated in \cite[Section 4.2]{DK} rather than the original formulation in \cite{LS}, as in \cite[Section 6]{DR}). In the projective case, these results were proven in \cite{RD,LS} for $p=q=2$.

The techniques we develop will also clarify some aspects of K\"ahler K-stability and will lead to results which are of independent interest. Firstly, we will relate the norms of test configurations and their corresponding geodesics. Using this, we will also be able to characterise the trivial K\"ahler test configurations, clarifying the definition of K-stability given in \cite{DR} (by the pathological examples of Li-Xu \cite{LX}, it is a rather subtle problem to understand what it means for a test configuration to be trivial, even in the projective case). 

\begin{theorem}\label{intro-norms} Let $(\scX,\scA)$ be a test configuration. The following are equivalent:
\begin{itemize}
\item[(i)] the $L^p$-norm $\|(\scX,\scA)\|_p$ vanishes for some $p$,
\item[(ii)] the $L^p$-norm $\|(\scX,\scA)\|_p$ vanishes for all $p$,
\item[(iii)] the minimum norm $\|(\scX,\scA)\|_m$ vanishes,
\end{itemize}
If $\X$ is smooth, then the $L^p$-norm of $(\scX,\scA)$ equals the $L^p$-norm of the associated geodesic, and hence these are also equivalent to the geodesic associated to $(\scX,\scA)$ being trivial. Finally, the $L^1$-norm of a test configuration is Lipschitz equivalent to the minimum norm. 
\end{theorem}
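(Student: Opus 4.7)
The plan is to express every quantity in the theorem as a functional of a single compactly-supported probability measure $\nu$ on $\R$, namely the Duistermaat-Heckman measure of $(\X,\scA)$. In the K\"ahler setting $\nu$ can be defined directly from any smooth representative $\Omega$ of $\scA$: restricting $\Omega$ to the $S^1$-fixed central fibre $\X_0$ and pushing forward the normalised K\"ahler measure by the associated moment map $\rho$ yields $\nu := \rho_*(\Omega|_{\X_0}^n/V)$. Standard equivariant intersection theory then gives
\[
\|(\X,\scA)\|_p^p \;=\; \int_\R |\lambda - \bar\lambda|^p \, d\nu(\lambda), \qquad \bar\lambda := \int_\R \lambda\,d\nu,
\]
and $\|(\X,\scA)\|_m$ can likewise be written as a linear functional of $\nu$ which vanishes precisely when $\nu$ is a Dirac mass (essentially $\bar\lambda - \inf\Supp(\nu)$, up to sign and normalisation).

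Given these formulas, (i)$\Leftrightarrow$(ii)$\Leftrightarrow$(iii) reduces to the observation that each vanishing condition is equivalent to $\nu = \delta_{\bar\lambda}$: for (i), the integrand $|\lambda-\bar\lambda|^p$ vanishes $\nu$-a.e.\ iff $\nu$ concentrates at $\bar\lambda$, which then gives (ii) automatically, while (iii) translates into all mass sitting at the infimum of the support and again forces a Dirac mass. The Lipschitz equivalence of the $L^1$- and minimum norms follows by a direct integration comparison: splitting $\nu$ into its restrictions to $\{\lambda<\bar\lambda\}$ and $\{\lambda>\bar\lambda\}$ shows $\|(\X,\scA)\|_1$ is controlled by $\|(\X,\scA)\|_m$ up to a universal constant, and conversely the minimum-norm functional is dominated by the first absolute moment of $\nu$, with constants independent of the test configuration because the structural properties of $\nu$ force the two one-sided moments to balance.

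The most substantial step is identifying, when $\X$ is smooth, the $L^p$-norm of $(\X,\scA)$ with the $L^p$-norm of the associated weak geodesic ray $\varphi_t$. Smoothness of $\X$ guarantees $C^{1,1}$ regularity of $\varphi_t$ by Phong-Sturm, so $\dot\varphi_t$ is bounded and well-defined. The plan is then to prove the pushforward identity
\[
(\dot\varphi_t)_*(\omega_{\varphi_t}^n / V) \;=\; \nu
\]
on any smooth fibre: on such a fibre $\dot\varphi_t$ plays the role of a Hamiltonian for the $\R$-action tangent to the geodesic ray with respect to $\omega_{\varphi_t}$, and the identity becomes a fibrewise Duistermaat-Heckman statement coming from the coarea formula applied to the equivariant submersion $\X\to\R$, combined with $\C^*$-invariance of $\Omega^{n+1}$. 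Once the identity is in place, integrating $|\lambda-\bar\lambda|^p$ against both sides gives the $L^p$-norm equality, and triviality of the geodesic---constancy of $\dot\varphi_t$---is exactly $\nu = \delta_{\bar\lambda}$, which closes the loop back to (i)--(iii).

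The main obstacle is pushing the Duistermaat-Heckman/coarea computation through the limited $C^{1,1}$ regularity of $\varphi_t$ and across the possibly singular central fibre. I expect to handle this by smooth approximation: either via $\varepsilon$-geodesics, where $\dot\varphi$ is smooth and the pushforward identity is a direct computation that survives in the $\varepsilon\to 0$ limit thanks to uniform $C^{1,1}$ bounds and weak convergence of the fibrewise measures, or by establishing the identity first on a smooth generic fibre---where it is classical---and then transporting it to the central fibre using $\C^*$-invariance of both sides.
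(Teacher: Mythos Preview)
Your approach via the Duistermaat--Heckman measure is genuinely different from the paper's, and is essentially the Hisamoto/Boucksom--Hisamoto--Jonsson strategy from the projective case. The paper explicitly remarks that Hisamoto's method ``does not straightforwardly extend to the K\"ahler setting'' and instead argues as follows: (i)$\Leftrightarrow$(ii) is immediate from the integral definition of the norm; for (ii)$\Leftrightarrow$(iii) and the Lipschitz equivalence the paper invokes the Darvas--Rubinstein comparison between $dJ(\varphi_s)/ds$ and $\|\varphi_s\|_1$ on the level of potentials, takes $s\to\infty$ using the known slope formula $\lim_s s^{-1}J(\varphi_s)=\|(\X,\scA)\|_m$, and passes from smooth to general $\X$ by perturbation; for the geodesic norm equality (Theorem~\ref{geodesic-test-config-norm}) the paper uses Sz\'ekelyhidi's observation that $\int_X f(h_\omega)\omega^n$ depends only on the class, applied componentwise on an SNC resolution of $\X_0$, together with Berndtsson's constancy of the geodesic norm.

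There is a real gap in your treatment of the minimum norm. First, the identification of $\|(\X,\scA)\|_m$ with a functional of $\nu$ such as $\lambda_{\max}-\bar\lambda$ is not ``standard equivariant intersection theory'' in the K\"ahler setting: the paper's definition of $\|(\X,\scA)\|_m$ is an intersection number on a resolution of indeterminacy of $X\times\pr^1\dashrightarrow\X$, and linking the term $(g^*\scA).[q^*\omega]^n$ to $\sup_{\X_0}h_\alpha$ requires an argument you have not supplied. Second, and more seriously, the Lipschitz bound $\|(\X,\scA)\|_m\leq C\|(\X,\scA)\|_1$ is \emph{false} for a general probability measure $\nu$: for $\nu=(1-\epsilon)\delta_0+\epsilon\delta_1$ one has $\lambda_{\max}-\bar\lambda=1-\epsilon$ while the first centred moment is $2\epsilon(1-\epsilon)$, so no universal constant exists. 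In the projective case BHJ obtain this bound from structural properties of $\nu$ coming from convex geometry (Okounkov bodies, concavity of volume), and your appeal to unspecified ``structural properties of $\nu$'' is precisely where the K\"ahler argument would need new input. The paper sidesteps this entirely by working with the $J$-functional and the $d_1$-metric on $\scH_\omega$, where the required Lipschitz comparison is already an analytic theorem of Darvas--Rubinstein, independent of any measure-theoretic structure of $\nu$.
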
 \noindent The minimum norm \cite{RD} is also called the ``non-Archimedean J-functional'' \cite{BHJ1}. It follows that uniform K-stability with respect to the $L^1$-norm (in the sense of \cite{GS4}) is equivalent to uniform K-stability with respect to the minimum norm (in the sense of \cite{RD,BHJ1}) in this general K\"ahler setting, extending the corresponding projective result \cite{BHJ1} (the advantage of the minimum norm being that it is closely related to analytic functionals and intersection theory). The result relating the norm of a test configuration to the norm of the associated geodesic is due to Hisamoto in the projective case (without any smoothness assumption), who proved this by relating both quantities to an associated Duistermatt-Heckman measure \cite{TH}; we give a more direct proof which applies in the K\"ahler setting. The remaining results in Theorem \ref{intro-norms} extend to the K\"ahler setting, and give somewhat different proofs of, some of the main results of \cite{RD,BHJ1}, which were proven in the projective case. 

\subsection{Comparison with other work}

Although this article is essentially a sequel to \cite{DR,ZSD}, where Theorem \ref{introdiscrete} was proven, the techniques used are very different. In \cite{DR,ZSD} the main theme was to differentiate energy functionals on the space of K\"ahler metrics along certain paths induced by test configurations. To prove Theorems \ref{intromaintheorem} and \ref{introcalabi} we instead use a combination of the results obtained in \cite{DR,ZSD} together with a delicate use hamiltonian geometry to obtain precise information about how the invariants of a test configuration (such as the norm) change when one perturbs the test configuration.

Similarly, although our result recovers and extends the corresponding projective results \cite{SD,StSz}, the arguments involve very different techniques. Donaldson's proof the lower bound on the Calabi functional for projective varieties involves the use of Bergman kernels to reduce to a finite dimensional problem \cite{SD}. Donaldson then uses a convexity result  in finite dimensions, arising from geometric invariant theory, to obtain his result. This avoids the use of the convexity of the Mabuchi functional \cite{BB}, which was not available at the time. Stoppa-Sz\'ekelyhidi similarly use embeddings into projective space and finite dimensional geometric invariant theory \cite{StSz} to prove their projective analogue of Theorem \ref{intromaintheorem}. As these are not available to us in the K\"ahler setting, we instead use analytic arguments using geodesics.

In forthcoming work, Sj\"ostr\"om Dyrefelt independently proves Corollary \ref{introcor}, regarding cscK metrics, using very different techniques \cite{ZSD2}. Interestingly, Sj\"ostr\"om Dyrefelt proves the stronger result that the existence of a cscK metric implies K-polystability rather than the equivariant K-polystability that we prove. As his proof uses deep analytic results on the Mabuchi functional for which the corresponding results for extremal metrics are not known, it would be difficult to use his techniques to prove Theorem \ref{intromaintheorem}. Sz\'ekelyhidi has proven a weak version of Theorem \ref{intromaintheorem}, for test configurations with smooth central fibre \cite[Section 4.1]{GS6}. Most test configurations of interest (for example those arising from deformation to the normal cone) will be highly singular, for example they will typically not even have irreducible central fibre. 

Analogues of our lower bound on the Calabi functional using the Mabuchi functional have been proven before \cite{TH,XC3}, being suggested first by Chen and Donaldson \cite[p3]{SD}. Chen proved a result similar to Theorem \ref{introcalabi} along smooth geodesics using the $\Psi$-invariant, which is defined as the limit derivative of the Mabuchi functional along a smooth geodesic \cite{XC3}. As geodesics are rarely smooth \cite{LV}, it is an essential point in our argument to work with general (singular) geodesics.  Hisamoto uses similar ideas and the Ding functional to prove an analogue of Theorem \ref{introcalabi} in the Fano case, with $[\omega]=c_1(X)$ and a different functional replacing the Calabi functional \cite{TH}. The first appearance of a result similar to Theorem \ref{introcalabi} was in the seminal work of Atiyah-Bott on Yang-Mills theory \cite{AB}. Essentially these results arise from the point of view of moment maps and geometric invariant theory; see for example the survey \cite{GRS}. Our lower bound is closely related to this moment map theory, and our proof is an infinite dimensional analogue of the usual proof in geometric invariant theory.

\subsection{Outline}

In Section \ref{relative-projective-sec} we discuss relative K-stability for projective varieties, following Sz\'ekelyhidi \cite{GS}. We then define a notion of relative K-stability for K\"ahler manifolds in Section \ref{rel-kstab-kahler-sec}. Section \ref{prelim-mabuchi-sec} contains preliminaries on the Mabuchi functional and geodesics, which are then used in Section \ref{calabi-sec} to prove Theorem \ref{introcalabi}. In Section \ref{norms-sec} we use the techniques developed to prove Theorem \ref{intro-norms}. Section \ref{relative-proof-sec} contains the proofs of Theorem \ref{intromaintheorem} and Corollary \ref{introcor}.

\vspace{4mm}

\noindent {\bf Notation and conventions:} We work throughout over the complex numbers. For notational convenience we ignore certain dimensional constants and factors of $2\pi$ which play no important role, so for example the right hand side of Theorem \ref{introcalabi} should have a factor which is a function only of $\dim X$. We use the language of K\"ahler geometry on analytic spaces, for this we refer to \cite{JPD} for an introduction.  For closed $(1,1)$-forms $\co_0,\hdots,\co_n$ on an $(n+1)$-dimensional analytic space $\X$, we often denote $$[\co_0].\hdots .[\co_n]= \int_{\X}\co_0\wedge\hdots\wedge \co_n.$$ We also sometimes call this an intersection number, borrowing the terminology of the case that $\co_i \in c_1(\L_i)$ for some line bundles $\L_i\to \X$. For an analytic space $\X\to\C$ (or $\pr^1$), we will denote $\X_t$ the fibre over $t\in \C$ (or $t\in\pr^1$). Likewise for a $(1,1)$-form $\co$ on $\X$, its restriction to a fibre will be denoted $\co_t$.

\vspace{4mm}

\noindent {\bf Acknowledgements:} I would like to thank Vestislav Apostolov, Joel Fine, Julius Ross, Zakarias Sj\"ostr\"om Dyrefelt, Jacopo Stoppa, G\'abor Sz\'ekelyhidi and Xiaowei Wang for helpful discussions. This work was mostly done while the author was a Fondation Wiener Anspach scholar at the Universit\'e libre de Bruxelles.

\section{Relative K-stability}
\subsection{Relative K-stability for projective varieties}\label{relative-projective-sec}

Let $(X,L)$ be a normal polarised variety, i.e. $L\to X$ is an ample line bundle. In this section, following Sz\'ekelyhidi \cite{GS}, we briefly recall what it means for $(X,L)$ to be relatively K-stable. First of all we define a set of degenerations of $(X,L)$, called \emph{test configurations}.

\begin{definition}\cite[Definition 2.1.1]{SD2} A \emph{test configuration} $(\X,\L)$ for $(X,L)$ is a normal variety $\X$ together with
\begin{itemize} 
\item[(i)] a flat (i.e. surjective) morphism $\pi: \scX \to \C$,
\item[(ii)] a $\C^*$-action $\alpha$ on $\scX$ covering the natural action on $\C$,
\item[(iii)] and an equivariant relatively ample line bundle $\scL$ on $\scX$, 
\end{itemize}
such that the fibre $(\scX_t,\scL_t)$ over $t$ is isomorphic to $(X,L^r)$ for one, and hence all, $t \in \C^*$ and for some $r>0$. We call $r$ the \emph{exponent} of  $(\X,\L)$.
\end{definition}

We will now extract numerical invariants from this data. The $\C^*$-action $\alpha$ induces a $\C^*$-action on the central fibre $(\X_0,\L_0)$, and hence on $H^0(\X_0,\L_0^k)$ for all $k$. Denote by $A_k$ the infinitesimal generator of this $\C^*$-action. Suppose moreover that $\X$ admits a \emph{vertical} $\C^*$-action $\beta$ lifting to $\scL$, i.e. $\beta$ fixes the fibres of $\pi$. Let $B_k$ be the infinitesimal generator of the action of $\beta$ on $H^0(\X_0,\L_0^k)$. From this data, we define polynomials for $k\gg 0$ as follows: \begin{align*} \dim H^0(\X_0,\L_0^k) &= a_0k^n+a_1k^{n-1}+O(k^{n-2}), \\ \tr (A_k) &= b_0 k^{n+1} + b_1 k^n + O(k^{n-1}), \\ \tr(B_k) &= c_0k^{n+1}+O(k^{n}), \\ \tr(A_k^2) &= d_0k^{n+2} + O(k^{n+1}), \\ \tr(A_k B_k) &= e_0 k^{n+2} + O(k^{n+1}).\end{align*}

Remark that by flatness, for $k\gg 0$ the dimension $\dim H^0(\X_0,\L_0^k)$ equals the Hilbert polynomial of $(X,L^r)$. That the other polynomials are indeed polynomials for $k\gg 0$ follows from equivariant Riemann Roch and its variants; one way of proving this is to use \cite[Section 5.1]{SD}.  

From these polynomials, we define various numerical invariants associated to the test configuration. The most important is the Donaldson-Futaki invariant.
 
\begin{definition}\cite{SD2} We define the \emph{Donaldson-Futaki invariant} of $(\X,\L)$ to be $$\DF(\X,\L) = \frac{b_0a_1 - b_1a_0}{a_0}.$$ \end{definition}

\begin{example} If $(X,L)$ admits a $\C^*$-action $\beta$, one obtains a \emph{product test configuration} by taking the induced $\C^*$-action on $(X\times \C, L)$. The Donaldson-Futaki invariant of such test configurations was introduced by Futaki (using holomorphic vector fields), and we denote it in this case by $F(\beta)$.\end{example}

Next are the norm and inner product.

\begin{definition}\label{inner-prods-norms-RR}\cite{SD,GS} We define the $L^2$\emph{-norm} $\|(\X,\L)\|_2$ of $(\X,\L)$ to be $$\|(\X,\L)\|^2_2 = \frac{d_0a_0 - b_0^2}{a_0}.$$ Similarly we define the \emph{inner product} of the $\C^*$-actions $\alpha$ and $\beta$ to be $$\langle \alpha, \beta \rangle = \frac{e_0a_0-b_0c_0}{a_0}.$$\end{definition}

\begin{definition} Let $T\subset \Aut(X,L)$ be a torus of automorphisms. We say that a test configuration $(\X,\L)$ is $T$-\emph{invariant} if it admits a vertical torus action which commutes with $\alpha$ and restricts to the usual action of $T$ on $(\X_t,\L_t)$ for all $t\neq 0$.\end{definition}

Pick an orthogonal basis $\beta_1,\hdots, \beta_d$ of $\C^*$-actions generating $T$. We denote \begin{equation}\DF_T(\X,\L) = \DF(\X,\L) - \sum_{i=1}^d \frac{\langle\alpha,\beta_i\rangle}{\langle \beta_i, \beta_i \rangle}F(\beta_i).\end{equation}

\begin{definition}\label{rel-kstab-proj}\cite[Definition 2.2]{GS} We say that $(X,L)$ is \emph{K-stable relative to} $T$ if for all test configurations with $\|(\X,\L)\|_2>0$, we have $\DF_T(\X,\L)>0.$ When $T$ is a \emph{maximal} torus, we simply say that $(X,L)$ is \emph{relatively K-stable}.  For clarity we sometimes call this \emph{projective relative K-stability}. \end{definition}

\begin{remark}\label{rmk:projection}Some remarks on the above definitions are in order:

\begin{itemize}
\item The definition of relative K-stability is motivated by notions of stability for varieties in Mumford's Geometric Invariant Theory, most notably Chow stability and Hilbert stability \cite{SD2,GS}.
\item Our requirement that the norm is positive is to exclude pathological test configurations found by Li-Xu \cite[Section 8.2]{LX}. Their examples have $\X$ non-normal, and normalise to the trivial test configuration. These pathological examples are characterised in \cite{RD,BHJ1} as having norm zero.
\item If  $(\X,\L)$ is \emph{orthogonal to} $T$, i.e. if $\langle\alpha,\beta_i\rangle = 0$ for all $i$, relative K-stability just requires that $\DF(\X,\L)>0$ provided $(\X,\L)$ has positive norm.

\item Note that these definitions only involve the $\C^*$-action on $(\X_0,\L_0)$, hence one can for example similarly define the inner product $\langle \alpha,\alpha\rangle$; clearly this equals the square of the $L^2$-norm $\|(\X,\L)\|_2.$
\item Suppose $T$ is a maximal torus. We then say that $(X,L)$ is \emph{equivariantly K-polystable} if in addition $F(\beta_i) = 0$ for all $i$. This is the notion relevant to constant scalar curvature K\"ahler metrics.

\end{itemize}

\end{remark}

To extend the above definitions to the setting of K\"ahler manifolds, we use another way of representing the above quantities in the projective case. First of all we recall how to ``compactify'' a test configuration.

Let $(\X,\L)$ be a test configuration. Since it is equivariantly isomorphic to the trivial family over $\C\backslash \{0\}$, by gluing in the trivial family around infinity one can \emph{canonically} glue a to give a flat family over $\pr^1$. We call such a test configuration \emph{compactified}, and abuse notation by writing it as $(\X,\L)$. We emphasise that this gluing procedure depends on the $\C^*$-action $\alpha$. For example, the compactifications of the product test configurations for $(\pr^1,\scO_{\pr^1}(1))$ are the Hirzebruch surfaces \cite[Example 2.19]{BHJ1}.

The point of using a compactification is the follow intersection-theoretic formula for the Donaldson-Futaki invariant due to Odaka \cite[Corollary 3.11]{YO} and Wang \cite[Proposition 17]{XW}. For this denote the \emph{slope} of $(X,L)$ as \[\mu(X,L) = \frac{-K_X.L^{n-1}}{L^n}.\]

\begin{proposition}\cite{YO,XW}\label{DF-compact-proj} Let $(\X,\L)$ be a compactified test configuration of exponent $r$. The Donaldson-Futaki invariant of  $(\X,\scL)$ is given as the intersection number
 $$\DF(\X,\scL) := \frac{n}{n+1}\mu(X,L^{\otimes r}) \scL^{n+1} + \scL^n.K_{\X / \pr^1}$$
Here we have written $K_{\X / \pr^1}$ to mean the relative canonical class, and we note that the intersection number $\scL^n.K_{\X}$ makes sense by normality of $\X$.
 \end{proposition}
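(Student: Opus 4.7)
The plan is to view both sides of the claimed identity as asymptotic invariants of the compactified family $\pi\colon\X\to\pr^1$ and match them via Riemann--Roch. The key observation is that the total weight of the $\C^*$-action on $H^0(\X_0,\scL_0^k)$, namely $\tr(A_k)$, can be recovered from the equivariant pushforward $\pi_*\scL^k$ on $\pr^1$: by construction the canonical compactification glues in the trivial family with trivial action over a neighbourhood of $\infty\in\pr^1$, so the $\C^*$-weight on the fibre of $\pi_*\scL^k$ at $\infty$ is zero. For a $\C^*$-equivariant vector bundle on $\pr^1$ with vanishing weight at $\infty$, the degree equals (up to sign) the total weight at $0$, giving $\deg \pi_*\scL^k = \tr(A_k)$ in the relevant sign convention.

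First I would compute $\chi(\X,\scL^k)$ via Riemann--Roch on the $(n+1)$-dimensional variety $\X$. For $k\gg 0$ relative ampleness forces the higher direct images $R^i\pi_*\scL^k$ to vanish, so $\chi(\X,\scL^k)=\deg\pi_*\scL^k + \rk\pi_*\scL^k$ by Riemann--Roch on $\pr^1$, and the rank is the $k$-th Hilbert polynomial value, contributing only to the absorbed lower-order terms. Extracting the top two coefficients in $k$ from Riemann--Roch on $\X$ yields
\[
\tr(A_k) \;=\; \frac{\scL^{n+1}}{(n+1)!}\,k^{n+1} \;-\; \frac{\scL^n\cdot K_{\X/\pr^1}}{2\cdot n!}\,k^n \;+\; O(k^{n-1}),
\]
so $b_0=\scL^{n+1}/(n+1)!$ and $b_1=-\scL^n\cdot K_{\X/\pr^1}/(2\cdot n!)$, with the \emph{relative} dualising sheaf appearing because the contribution of $K_{\pr^1}$ is constant in fibres and gets absorbed into the rank term.

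Next, flatness and the isomorphism $(\X_t,\scL_t)\cong(X,L^r)$ for $t\neq 0$ force $\dim H^0(\X_0,\scL_0^k)$ to coincide with the Hilbert polynomial of $L^r$ on $X$ for $k\gg 0$, so
\[
a_0 \;=\; \frac{r^n L^n}{n!}, \qquad \frac{a_1}{a_0} \;=\; \frac{n}{2}\,\mu(X,L^r).
\]
Substituting into $\DF(\X,\scL)=b_0(a_1/a_0)-b_1$ and collecting terms produces exactly the claimed expression, up to the overall dimensional factor $1/(2\cdot n!)$ that the paper's conventions explicitly suppress.

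The main obstacle is applying Riemann--Roch rigorously on $\X$, which is only assumed normal. One route is to take a $\C^*$-equivariant resolution $\tilde\X\to\X$ and transfer the calculation: the intersection numbers $\scL^{n+1}$ and $\scL^n\cdot K_{\X/\pr^1}$ are well defined on $\X$ by normality and agree with those of the pullbacks on $\tilde\X$ at the order relevant to $b_0,b_1$, since the singular locus of $\X$ has codimension $\geq 2$ and modifications there affect only lower-order terms in $k$. Alternatively one invokes an equivariant Riemann--Roch for singular varieties directly, as in Odaka's original treatment. A final minor check is that the canonical gluing indeed produces trivial $\C^*$-weight at $\infty$, which is immediate from the equivariant triviality of $(\X,\scL)$ over $\pr^1\setminus\{0\}$.
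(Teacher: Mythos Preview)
The paper does not give its own proof of this proposition; it is stated with attribution to Odaka and Wang and the exposition moves on immediately. Your argument is essentially the standard one from those references: identify $\tr(A_k)$ with $\deg\pi_*\scL^k$ via the vanishing of the weight at $\infty$ in the canonical compactification, expand $\chi(\X,\scL^k)$ by Riemann--Roch, subtract the rank term $h^0(X,L^{rk})$, and match the top two coefficients. The computations are correct, including the cancellation of the $\pi^*K_{\pr^1}$ contribution against the rank term and the final identification up to the suppressed factor $1/(2\cdot n!)$. Your treatment of the singular case---passing to an equivariant resolution, noting that $\chi$ changes only at order $O(k^{n-1})$ because the higher direct images $R^if_*\scO_{\tilde\X}$ are supported in codimension $\geq 2$, and checking that $(f^*\scL)^{n+1}=\scL^{n+1}$ and $(f^*\scL)^n\cdot K_{\tilde\X}=\scL^n\cdot K_{\X}$ since $f^*\scL$ is trivial along exceptional fibres---is also sound and is the standard route.
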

 
We now give a more analytic description of the inner product and the norms of a test configuration $(\X,\L)$. For this, we equivariantly embed $(\X,\L)$ into projective space, so that $(\X,\L)$ is realised as the closure of the orbit of $(X,L)$ under some one-parameter subgroup, see e.g. \cite[Lemma 2]{SD}. Similarly there is a one-parameter subgroup for the action induced by $\beta$. Write $h_{\alpha}, h_{\beta}$ for the hamiltonians corresponding with respect to the Fubini-Study metric, and let $\hat h_{\alpha},\hat h_{\beta}$ be their average values over $\X_0$ (i.e. $\hat h_{\alpha} = \frac{\int_{\X_0}h_{\alpha}\omega_{FS}^n}{\int_{\X_0}\omega_{FS}^n}$).

\begin{proposition} Denote by $V=\int_X c_1(L)^n$ the volume of $(X,L)$. The inner product $\langle \alpha, \beta \rangle$ is given by $$\langle \alpha, \beta \rangle =\int_{\X_0} (h_{\alpha} - \hat h_{\alpha})(h_{\beta} - \hat h_{\beta})\omega^n_{FS}.$$ Hence the $L^2$-norm of $(\X,\L)$ is given as $$\|(\X,\L)\|^2_2 = \int_{\X_0}(h_{\alpha}-\hat h_{\alpha})^2\omega^n_{FS}.$$\end{proposition}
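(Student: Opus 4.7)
The plan is to extract the leading coefficients $a_0, b_0, c_0, d_0, e_0$ from an equivariant Riemann--Roch expansion on $(\X_0, \L_0)$ and then verify the identity by an elementary algebraic manipulation. The key input is that, once $(\X_0, \L_0^k)$ is equivariantly embedded into $\pr(H^0(\X_0, \L_0^k)^*)$ for $k \gg 0$, the hamiltonians $h_\alpha$ and $h_\beta$ for the Fubini--Study form on the ambient projective space restrict to functions on $\X_0$ whose pointwise values are (asymptotically in $k$) the Fubini--Study weights of the $\C^*$-actions generated by $\alpha$ and $\beta$. Since $\alpha$ and $\beta$ commute, $H^0(\X_0, \L_0^k)$ decomposes simultaneously under $A_k$ and $B_k$, with joint weights living in the moment polytope of the joint torus action.

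Applying equivariant Riemann--Roch (or equivalently the Tian--Zelditch--Catlin expansion of the equivariant Bergman kernel) to this joint decomposition yields, up to the dimensional factors the paper suppresses,
\begin{align*}
a_0 &= \int_{\X_0} \omega_{FS}^n = V, \\
b_0 &= \int_{\X_0} h_\alpha\, \omega_{FS}^n, \quad c_0 = \int_{\X_0} h_\beta\, \omega_{FS}^n, \\
d_0 &= \int_{\X_0} h_\alpha^2\, \omega_{FS}^n, \quad e_0 = \int_{\X_0} h_\alpha h_\beta\, \omega_{FS}^n.
\end{align*}
For $\tr(A_k)$ and $\tr(B_k)$ alone these are the classical Futaki/Donaldson formulas; for the quadratic and mixed traces they follow from the same method applied to the full commuting torus, using that the counting of lattice points in the moment polytope, weighted by the product of symbols, scales correctly in $k$.

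Substituting into the definition $\langle \alpha, \beta \rangle = (e_0 a_0 - b_0 c_0)/a_0$ and using $\hat h_\alpha = b_0/V$, $\hat h_\beta = c_0/V$, one obtains
$$\langle \alpha, \beta \rangle = \int_{\X_0} h_\alpha h_\beta\, \omega_{FS}^n - \hat h_\beta \int_{\X_0} h_\alpha\, \omega_{FS}^n = \int_{\X_0} (h_\alpha - \hat h_\alpha)(h_\beta - \hat h_\beta)\, \omega_{FS}^n,$$
where the last equality holds because $\int_{\X_0} (h_\beta - \hat h_\beta)\, \omega_{FS}^n = 0$ by definition of the average. The $L^2$-norm formula is then the special case $\beta = \alpha$, consistent with the identification $\|(\X,\L)\|_2^2 = \langle \alpha, \alpha \rangle$ noted in Remark \ref{rmk:projection}. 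The main obstacle is the second step — establishing the leading-order asymptotics for $\tr(A_k B_k)$ and $\tr(A_k^2)$ in terms of integrals of products of hamiltonians; once these are in hand the concluding rearrangement is routine.
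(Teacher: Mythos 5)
Your proposal follows essentially the same route as the paper, which simply defers to Donaldson's argument in \cite[Section 5.1]{SD}: identify $a_0,b_0,c_0,d_0,e_0$ as integrals of (products of) hamiltonians over $\X_0$ via equivariant Riemann--Roch and then rearrange, and your concluding algebra is correct. The one caution is that $\X_0$ may be singular, so the asymptotics of $\tr(A_k^2)$ and $\tr(A_kB_k)$ should be obtained via Donaldson's fibre-bundle construction expressing the traces as intersection numbers rather than via a Bergman-kernel expansion --- but this is precisely the step you flag as the remaining obstacle, and it is the content of the cited reference.
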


\begin{proof} This was proven by Donaldson for the norm \cite[Section 5.1]{SD}; a similar proof works for the inner products. \end{proof}

This inner product was introduced by Futaki-Mabuchi \cite{FM} when $\X_0$ is smooth. In general, there is no similar integral formula for the Donaldson-Futaki invariant of $(\X,\L)$ when the central fibre is singular, which forces us to work on the total space $\X$.

This more analytic definition allows us to define, following Donaldson, the $L^p$-norm of a test configuration for general $p$. 

\begin{definition}\cite[p20]{SD} We define the $L^p$\emph{-norm} $\|(\X,\scA)\|_p$ of $(\X,\scL)$ to be $$\|(\X,\scA)\|^p_p =\int_{\X_0} \left|h_{\alpha} - \hat h_{\alpha}\right|^p\omega^n_{FS}.$$ \end{definition}

\begin{remark}This clearly agrees with our previous reformulation for $p=2$. When $p$ is an integer, one can give an equivalent definition of the $L^p$-norm using equivariant Riemann-Roch as in Definition \ref{inner-prods-norms-RR} \cite[Section 5.1]{SD}.\end{remark}

\subsection{Relative K-stability for K\"ahler manifolds}\label{rel-kstab-kahler-sec}

We now introduce a notion of relative K-stability for K\"ahler manifolds, generalising the notion of K-stability defined in \cite{DR,ZSD}. We refer to \cite{JPD,DR} for the background on K\"ahler analytic spaces needed. As relative K-stability is a modification of K-stability, we first recall how to define the objects related to K-stability in the K\"ahler setting.

\begin{definition}\cite{ZSD,DR} A \emph{test configuration} for $(X,[\omega])$ is a normal K\"ahler space $(\X,\scA)$, together with
\begin{enumerate}[(i)]
\item a surjective flat map $\pi: \X\to\C$,
\item a $\C^*$-action on $\X$ covering the usual action on $\C$ such that the class $\scA$ is $\C^*$-invariant and K\"ahler on each fibre,
\item the fibre $(\X_t,[\co_t])$ is isomorphic to $(X,[\omega])$ for all $t\neq 0$.
\end{enumerate}
\end{definition}

\begin{remark} Just as in the projective case, one can glue an arbitrary test configuration to its compactification which admits a map to $\pr^1$. The gluing essentially encodes the $\C^*$-action. We freely interchange between a test configuration and its compactification. When choosing $(1,1)$-forms $\co\in\scA$, we assume that $\co$ extends to a smooth form on the compactification.\end{remark}

Just as in the projective case, we denote the \emph{slope} of a K\"ahler manifold by $$\mu(X,[\omega]) = \frac{c_1(X).[\omega]^{n-1}}{[\omega]^{n}}.$$ 

We now recall the definition of the Donaldson-Futaki invariant of a test configuration, as given in \cite{DR,ZSD}. This definition is motivated by the intersection-theoretic version of the Donaldson-Futaki invariant in the projective case of Definition \ref{DF-compact-proj}. 

\begin{definition}\cite{ZSD,DR} Let $(\X,\scA)$ be a test configuration with $\X$ smooth. We define the \emph{Donaldson-Futaki invariant} of $(\X,\co)$ to be \begin{equation*}
\DF(\X,\scA) := \frac{n}{n+1}\mu(X,[\omega]) \scA^{n+1} - (c_1(\X) - \pi^*c_1(\pr^1)).\scA^n.\label{eq:DFI}\end{equation*}
If $\X$ is singular we take a resolution $p: \scY\to\scX$ and define \begin{equation*}
\DF(\X,\scA) := \frac{n}{n+1}\mu(X,[\omega]) (p^*\scA)^{n+1} - (c_1(\scY) - (\pi\circ p)^*c_1(\pr^1)).(p^*\scA)^n.\end{equation*} \end{definition}

This key point is that this is independent of choice of resolution of singularities \cite{DR}.

\begin{example}\label{ACGTF-example} An important example of a K\"ahler test configuration occurs already when $X$ is projective, and $L$ is an ample line bundle. Then one can have an algebraic total space $\X$, however with an \emph{irrational} polarisation $\L$. That is, $\L$ is a formal $\R$-valued tensor product of line bundles. As the first Chern class of such an object makes sense, its Donaldson-Futaki invariant clearly does also. An example of \cite{ACGTF} strongly suggests that one needs to include such test configurations in the ``correct'' definition of relative K-stability. \end{example}

\begin{remark}When $X$ is projective and $\X_0$ is smooth, a result of Popovici implies that $\X_0$ is Moishezon \cite[Theorem 1.4]{DP}. Since it is also K\"ahler by assumption, it follows that $\X_0$ is itself projective. We expect more generally that even if $\X_0$ is a singular analytic space, provided $X$ is projective then $\X_0$ is a projective scheme. \end{remark} 

We now turn to the norms and inner products of test configurations. The main challenge here is obtaining the right definition of a hamiltonian on a normal analytic space. For this, we recall the following characterisation of the hamiltonian when $\X$ is smooth (see e.g. \cite[Example 4.16]{GS5}).

\begin{lemma}Let $(\X,\scA)$ be a smooth test configuration, and let $\co\in \scA$. Define a smooth family of functions $\varphi(t)$ by $$\alpha(t)^*\co - \co = i\ddbar \varphi_t.$$ Then the function $h_{\alpha} = \alpha(t)_*\dot\varphi(t)$ is a hamiltonian for the $S^1$-action induced from $\alpha(t)$ with respect to $\co$. In particular, $h_{\alpha}$ is independent of $t$. \end{lemma}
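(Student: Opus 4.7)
The plan is to differentiate the defining equation along a real one-parameter subgroup of $\C^*$ and read off the hamiltonian from the resulting formula. Restricting to $t = e^s$ with $s\in\R$ and letting $V$ denote the real holomorphic vector field on $\X$ generating this subgroup, I would normalize $\varphi_t$ so that $\varphi_1 = 0$ and $\varphi_t$ depends smoothly on $s$, making $\dot\varphi_t$ an honest function rather than an equivalence class. Differentiating in $s$ and using the standard identity $\tfrac{d}{ds}\alpha(e^s)^*\co = \alpha(e^s)^*\mathcal{L}_V\co$ gives
\[
\alpha(t)^*\mathcal{L}_V\co = i\ddbar\dot\varphi_t.
\]
Because $\C^*$ is abelian, $V$ is $\alpha$-invariant, so pushing forward by $\alpha(t)$ (which commutes with $i\ddbar$ since $\alpha(t)$ is a biholomorphism) yields the key identity
\[
\mathcal{L}_V\co = i\ddbar h_\alpha, \qquad h_\alpha := \alpha(t)_*\dot\varphi_t.
\]

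To deduce the hamiltonian property I would apply Cartan's magic formula with $d\co=0$ to rewrite the left-hand side as $d(\iota_V\co)$, and then invoke a standard type-decomposition argument in local holomorphic coordinates: the fact that $V$ is real holomorphic and $\co$ is of type $(1,1)$ forces $\iota_V\co$ to coincide with $d^c h_\alpha$ on the nose, not merely up to a closed form. Passing to the $S^1$-generator $JV$ then recovers the desired identity $\iota_{JV}\co = dh_\alpha$, as is standard in the K\"ahler moment map formalism (cf.\ \cite{GS5}).

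For the $t$-independence, the cocycle relation $\alpha(st)^*\co = \alpha(s)^*\alpha(t)^*\co$ translates directly into $\varphi_{st} = \varphi_s + \varphi_t\circ\alpha(s) + c(s,t)$ for an additive constant $c(s,t)\in\R$ coming from the $\ddbar$-lemma. Differentiating this in the real parameter of $t$ at $t=1$ and rearranging yields $\alpha(s)_*\dot\varphi_s = \dot\varphi_1 + \mathrm{const}$, so $h_\alpha$ is $t$-independent modulo an additive constant that is absorbed by the chosen normalization (e.g.\ by prescribing the mean of $\varphi_t$ on a fixed fibre to be zero). The main obstacle throughout is the careful bookkeeping of these $\ddbar$-lemma constants across the family so that $\dot\varphi_t$ is honestly defined and $h_\alpha$ is globally well-defined on $\X$; once this normalization is in place, the remainder of the argument reduces to the abelianness of $\C^*$ and standard Cartan calculus.
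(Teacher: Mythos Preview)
The paper does not actually prove this lemma; it merely cites \cite[Example 4.16]{GS5}. So there is no in-paper argument to compare against, and your proposal supplies more detail than the paper itself.

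That said, there is a genuine gap. From $\mathcal{L}_V\co = i\ddbar h_\alpha$ you want to deduce $\iota_V\co = d^c h_\alpha$ (equivalently $\iota_{JV}\co = dh_\alpha$), and you assert that ``a standard type-decomposition argument'' using holomorphicity of $V$ and the $(1,1)$-type of $\co$ forces this on the nose. It does not: the two identities differ by a closed real $1$-form, and type considerations alone cannot kill that ambiguity on a non-compact $\X$. Concretely, for any pluriharmonic $u$ one has $i\ddbar u = 0$ but $d^c u \neq 0$ in general, so $h_\alpha \mapsto h_\alpha + u$ preserves your Lie-derivative identity while destroying the hamiltonian equation. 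Differentiating only along the real subgroup $e^s$ throws away exactly the information needed to rule this out.

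The fix is to exploit the holomorphic dependence on $t$. The potentials assemble into a single function $\Phi$ on $\C^*\times\X$ with $\alpha^*\co - p_2^*\co = i\partial\bar\partial\Phi$ as $(1,1)$-forms on the product. Contracting with the holomorphic field $t\partial_t$ (rather than its real part) and using $\iota_{t\partial_t}(i\partial\bar\partial\Phi) = i\bar\partial(t\partial_t\Phi)$ yields $\alpha^*(\iota_{V^{1,0}}\co) = i\bar\partial(t\partial_t\Phi)$ directly. Restricting to a slice, pushing forward by $\alpha(t)$, and taking $\co$ (hence $\Phi$) $S^1$-invariant so that $t\partial_t\Phi$ is a constant multiple of $\dot\varphi_t$, one obtains $\iota_{V^{1,0}}\co = c\,\bar\partial h_\alpha$ on the nose, which is the hamiltonian equation. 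The $t$-independence is then immediate, and your separate cocycle argument becomes unnecessary.
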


We simply take this to be our definition of the hamiltonian when $\X$ is singular.

\begin{definition}\label{hamiltonian-def} Let $(\X,\scA)$ be a  (not necessarily smooth) test configuration, and let $\co\in \scA$. Using the $\partial\bar\partial$-lemma, define a family of smooth functions $\varphi(t)$ by $$\alpha(t)^*\co - \co = i\ddbar \varphi_t.$$ We define the \emph{hamiltonian} for $\alpha(t)$ to be $h_{\alpha} = \alpha(t)_*\dot\varphi(t)$.\end{definition}

The function $h_{\alpha}$ is then a hamiltonian for the corresponding $S^1$-action in the usual sense on the smooth locus of $\X$ (note that the smooth locus admits a $\C^*$-action as the action preserves the dimension of the tangent space). In particular it is indeed independent of $t$ on this locus, and by smoothness is independent of $t$ everywhere as claimed. 

Using this definition of the hamiltonian, we can mimic the analytic definition of the norms and inner products given in the projective case. Denote by $V=\int_X [\omega]^n$ the volume of $(X,[\omega])$. Mirroring the projective case, set $\hat h_{\alpha} =\frac{1}{V}\int_{\X_0}h_{\alpha}\co_0^n$.

\begin{definition}\label{kahlerip}We define the \emph{inner product} $\langle \alpha, \beta \rangle$ by $$\langle \alpha, \beta \rangle =\int_{\X_0}(h_{\alpha} - \hat h_{\alpha})(h_{\beta} - \hat h_{\beta})\co_0^n,$$ where $h_{\alpha}, h_{\beta}$ are hamiltonians for $\alpha,\beta$ respectively as defined above. We analogously define, for example, the inner product of $\alpha$ or $\beta$ with itself. Note also these definitions make sense for general elements of $\Lie (T)$ not necessarily generating a $\C^*$-action. \end{definition}

 This is just the Futaki-Mabuchi inner product when $\X_0$ is smooth \cite{FM}. We can similarly define the $L^p$-norm.

\begin{definition} We define the $L^p$\emph{-norm} $\|(\X,\L)\|_p$ of $(\X,\L)$ by $$\|(\X,\L)\|^p_p = \int_{\X_0} \left|h_{\alpha} -\hat h_{\alpha}\right|^p\co_0^n.$$ \end{definition}

It is natural to ask if the $L^p$-norm can be formulated as an intersection number, in a similar way to the Donaldson-Futaki invariant. This is the case when $\X$ is projective and $p$ is an even integer, and seems very likely in the K\"ahler case. We discuss this further in Remark \ref{intersections-analytic}.

\begin{remark} When $X$ is projective and $[\omega]=c_1(L)$, Sz\'ekelyhidi defines the $L^2$-norm of arbitrary filtrations of the co-ordinate ring of $(X,L)$, which play the role of generalised test configurations \cite[Equation (5)]{GS2}. One example of such an object is a test configuration with an irrational line bundle. One can show that the $L^2$-norm we have defined here equals Sz\'ekelyhidi's $L^2$-norm of a filtration in this case, by approximating both objects with genuine projective test configurations. \end{remark}

We will later prove the following.

\begin{proposition} The norm and inner product depend only on the class $\scA$, and not on the choice of $\co \in \scA$.\end{proposition}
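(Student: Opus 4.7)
Write $\co' - \co = i\ddbar\psi$ via the $\ddbar$-lemma and consider the affine path $\co_s = \co + si\ddbar\psi$ for $s\in[0,1]$. The plan is to show directly that along this path the inner product and each $L^p$-norm are constant in $s$. The main preliminary input is the hamiltonian shift: substituting $\co_s$ into Definition \ref{hamiltonian-def}, the potentials satisfy $\varphi_{t,s} = \varphi_t + s(\alpha(t)^*\psi - \psi)$, and differentiating in $t$ and pushing forward by $\alpha(t)$ yields
\[
h_{\alpha,s} = h_\alpha + s\,V_\alpha(\psi), \qquad h_{\beta,s} = h_\beta + s\,V_\beta(\psi),
\]
where $V_\alpha, V_\beta$ denote the real vector fields generating the $\R_{>0}$-subgroups of $\alpha$ and $\beta$.

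The heart of the argument will be to show that every joint polynomial moment $M_{a,b}(s) := \int_{\X_0} h_{\alpha,s}^a h_{\beta,s}^b\,\co_{0,s}^n$ is $s$-independent; since $\hat h_{\alpha,s}=M_{1,0}/M_{0,0}$ and $\langle\alpha,\beta\rangle_s = M_{1,1}-M_{1,0}M_{0,1}/M_{0,0}$, this immediately implies invariance of the inner product and of all even-integer $L^p$-norms. Differentiating $M_{a,b}$ at $s=0$ produces the hamiltonian-shift contribution $\int(a\,V_\alpha\psi\cdot h_\alpha^{a-1}h_\beta^b + b\,V_\beta\psi\cdot h_\alpha^a h_\beta^{b-1})\co_0^n$ together with the Monge--Amp\`ere contribution $n\int h_\alpha^a h_\beta^b\,i\ddbar\psi\wedge\co_0^{n-1}$. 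Integrating $i\ddbar$ by parts and invoking the moment-map identity $i\ddbar h_\alpha = \mathcal{L}_{V_\alpha}\co_0$ (which follows from the lemma preceding Definition \ref{hamiltonian-def}), then applying a second integration by parts of the form $\int f\,\mathcal{L}_W\co_0^n = -\int W(f)\,\co_0^n$, converts the Monge--Amp\`ere contribution into a sum of radial-derivative terms that exactly cancel the hamiltonian-shift contribution, together with cross terms $\int\psi\,i\partial h_\alpha\wedge\bar\partial h_\beta\wedge\co_0^{n-1}$; these cross terms in turn cancel against the terms $\int\psi\,V_\alpha(h_\beta)\co_0^n$ produced by the second IBP, via the coordinate identity $V_\alpha^j = \tfrac{1}{2}g^{j\bar k}\partial_{\bar k}h_\alpha$ extracted from $\iota_{V_\alpha}\co_0 = \tfrac{1}{2}d^c h_\alpha$. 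Thus $\partial_s M_{a,b}\equiv 0$.

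For the $L^p$-norm with arbitrary $p$, the joint pushforward measure $(h_{\alpha,s},h_{\beta,s})_\ast\co_{0,s}^n$ on $\R^2$ is compactly supported of fixed total mass, and by the previous step all of its polynomial moments are $s$-invariant. Moment-determinacy for compactly supported measures then yields $s$-invariance of the measure itself, and integrating $|y-\hat h_\alpha|^p$ against it gives $s$-invariance of $\|(\X,\scA)\|_p$. The main obstacle I anticipate is making the integration-by-parts arguments rigorous when $\X_0$ is singular, where the moment-map identities are only guaranteed on the smooth locus. My plan is to pass to a $T$-equivariant resolution $\pi:\tilde\X_0\to\X_0$, with $T$ the torus generated by $\alpha$ and $\beta$: the hamiltonians, the moment-map identity, and the path $\co_{0,s}$ all pull back to $\pi^*\co_{0,s}$, the computation above applies on the smooth $\tilde\X_0$, and the conclusion descends to $\X_0$ using $\pi_\ast(\pi^*\co_{0,s})^n = \co_{0,s}^n$ as currents.
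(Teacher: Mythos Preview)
Your plan is correct and is essentially the same as the paper's proof, which is embedded in the proof of Theorem~\ref{geodesic-test-config-norm}. Both arguments differentiate the relevant integral along the affine path of metrics, use the moment-map identity together with integration by parts to show the derivative vanishes, and handle the singular central fibre by passing to an equivariant resolution. There are only two presentational differences. First, rather than computing joint polynomial moments $M_{a,b}$ and then invoking moment determinacy, the paper packages the one-hamiltonian computation as the statement that $\int_X f(h_\omega)\,\omega^n$ is independent of $\omega\in[\omega]$ for \emph{every} $C^1$ function $f:\R\to\R$ (equivalently, the Duistermaat--Heckman pushforward $(h_\omega)_*\omega^n$ is class-invariant), and then reaches general $p$ by approximating $x\mapsto |x-\hat h|^p$ by $C^1$ functions; the inner product is dispatched with the remark that ``essentially the same argument'' works with two commuting hamiltonians, which is exactly your joint-moment computation. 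Second, the paper resolves the total space $\X$ so that the new central fibre is a simple normal crossings divisor $\sum a_i\Y_{0,i}$, and applies the smooth result on each component with the (semi-positive) restricted class; you instead resolve $\X_0$ directly. Both routes are fine, though the paper's choice of resolving $\X$ has the minor advantage that the hamiltonian (defined via Definition~\ref{hamiltonian-def} on the total space) automatically pulls back correctly, and multiplicities in a possibly non-reduced $\X_0$ are handled transparently by the coefficients $a_i$.
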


As in the projective case, for $T\subset \Aut(X,[\omega])$ a torus of automorphisms, let us say a K\"ahler test configuration $(\scX,\scA)$ is $T$\emph{-invariant} if it admits a vertical torus action which commutes with $\alpha$ and restricts to the usual action of $T$ on $(\X_t,\scA_t)$ for all $t\neq 0$. Picking an orthogonal basis $\beta_1,\hdots, \beta_d$ of $\C^*$-actions generating $T$, we denote \[\DF_T(\X,\scA) = \DF(\X,\scA) - \sum_{i=1}^d \frac{\langle\alpha,\beta_i\rangle}{\langle \beta_i, \beta_i \rangle}F(\beta).\] Here $F(\beta)$ is the usual (K\"ahler) Futaki invariant associated to $\beta$ on the general fibre (which can also be defined for general elements of $\Lie (T)$). Our definition of relative K-stability is now just as in the projective case.

\begin{definition} We say that $(X,[\omega])$ is \emph{relatively K-stable} if $\DF_T(\X,\scA)>0$ for all test configurations whose projection has positive norm $\|(\X,\scA)\|_2>0$, we have  We sometimes call this \emph{K\"ahler relative K-stability} for clarity. \end{definition}

\begin{remark} We expect that the technical condition on the positivity of the norm can be removed using a forthcoming result of Sj\"ostrom Dyrefelt \cite{ZSD-thesis}. \end{remark}

If one also has $F(\beta_i)=0$ for all $i$, we simply say that $(X,[\omega])$ is \emph{(K\"ahler) equivariantly K-polystable}. This is the notion relevant to the existence of constant scalar curvature K\"ahler metrics on K\"ahler manifolds.

By Example \ref{ACGTF-example}, K\"ahler relative K-stability is a stronger notion than projective relative K-stability when $[\omega]$ is the first Chern class of an ample line bundle (so $X$ is projective).

\begin{remark}\label{YTD-remark} Sz\'ekelyhidi's analogue of the Yau-Tian-Donaldson conjecture states that a smooth polarised variety $(X,L)$ admits an extremal metric if and only if $(X,L)$ is relatively K-stable \cite{GS}. As discussed by Sz\'ekelyhidi \cite{GS}, one very likely needs to strengthen the definition of (projective) relative K-stability for this to be true. In light of Example \ref{ACGTF-example}, it is natural to ask if our notion of K\"ahler relative K-stability is actually the correct strengthening. 

Although K\"ahler relative K-stability is almost certainly the \emph{weakest} plausible notion to imply the existence of an extremal K\"ahler metric, it may be too optimistic to hope the converse is true. In the better understood projective setting of constant scalar curvature K\"ahler metrics with $\Aut(X,L)$ discrete, it is commonly believed that one needs to strengthen the definition of K-stability to either filtration K-stability \cite{GS2} or the even stronger notion of uniform K-stability \cite{RD,BHJ1}. These notions both rule out the phenomenon explained in Example \ref{ACGTF-example}. Thus it may be that one needs a stronger version of K\"ahler relative K-stability to imply the existence of an extremal K\"ahler metric. 
\end{remark}

\section{Lower bounds on the Calabi functional}
\subsection{Preliminaries on geodesics and the Mabuchi functional}\label{prelim-mabuchi-sec}

By analogy with Donaldson's work on the Hitchin-Kobayashi correspondence, Mabuchi introduced a functional on the space of K\"ahler metrics in a fixed K\"ahler class which conjecturally ``detects'' the existence of a cscK metric in that class \cite{TM}. The properties of this functional will be key to proving the lower bound on the Calabi functional.

Let $(X,\omega)$ be an $n$-dimensional K\"ahler manifold. Denote by $$\scH_{\omega} = \{\varphi\in C^{\infty}(X,\R): \omega_{\varphi}=\omega+i\ddbar\varphi > 0\}$$ the space of K\"ahler potentials in the K\"ahler class $[\omega]$.

\begin{definition}\cite{TM}\label{Mabuchi1} For $\varphi\in \scH_{\omega}$, let $\{\varphi_t: t\in [0,1]\}$ be a smooth path of K\"ahler potentials with $\varphi_0=0, \varphi_1 = \varphi$. We define the \emph{Mabuchi functional} $\scM_{\omega}:  \scH_{\omega} \to \R$ to be $$\scM_{\omega}(\varphi)=-\int_0^1 \int_X \dot{\varphi}_t(S(\omega_t) - n\mu(X,[\omega]))\omega_t^n\wedge dt,$$ where $S(\omega_t)$ is the scalar curvature. \end{definition}

Part of the definition is the statement that the Mabuchi functional is independent of path chosen. The Mabuchi functional also admits a more explicit formula as follows.

\begin{proposition}\label{chen-tian-formula}\cite{XC2,GT2} The Mabuchi functional can be written as a sum $\scM_{\omega}(\varphi) = H_{\omega}(\varphi)+E_{\omega}(\varphi),$ where \begin{align*} H_{\omega}(\varphi) &=\int_X \log\left(\frac{\omega_{\varphi}^n}{\omega}\right)\omega_{\varphi}^n, \\ E_{\omega}(\varphi) &= \mu(X,[\omega])\frac{n}{n+1} \sum_{i=0}^n\int_X \varphi \omega^i \wedge \omega^{n-i}_{\varphi}  - \sum_{i=0}^{n-1}  \int_X \varphi \Ric \omega \wedge \omega^i \wedge \omega_{\varphi}^{n-1-i}.\end{align*} \end{proposition}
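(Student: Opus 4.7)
The plan is to differentiate both sides of the claimed identity along a smooth path $\varphi_t$ of K\"ahler potentials with $\varphi_0 = 0$ and $\varphi_1 = \varphi$, writing $\omega_t := \omega + i\ddbar\varphi_t$, and to verify that both sides agree at $t = 0$ and have the same derivative in $t$. Agreement at $t=0$ is immediate: $H_\omega(0) = 0$, $E_\omega(0) = 0$, and $\scM_\omega(0) = 0$ by applying Definition \ref{Mabuchi1} to the constant path. A pleasant by-product is that, since the right hand side $H_\omega(\varphi) + E_\omega(\varphi)$ manifestly depends only on $\varphi$, the same argument verifies the path-independence of $\scM_\omega$ implicit in Definition \ref{Mabuchi1}.

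For $\tfrac{d}{dt}H_\omega(\varphi_t)$, I would set $f_t = \log(\omega_t^n/\omega^n)$ and use the identities $\tfrac{d}{dt}\omega_t^n = \Delta_t \dot\varphi_t\cdot\omega_t^n$ and $\tfrac{d}{dt}f_t = \Delta_t \dot\varphi_t$, where $\Delta_t$ denotes the $\omega_t$-Laplacian. The contribution from differentiating the measure factor drops out by Stokes; after an integration by parts one obtains $\tfrac{d}{dt}H_\omega(\varphi_t) = \int_X \dot\varphi_t\,\Delta_t f_t\,\omega_t^n$. Combined with the Ricci-deformation identity $\Ric(\omega_t) = \Ric(\omega) - i\ddbar f_t$, which yields $\Delta_t f_t = -S(\omega_t) + \mathrm{tr}_{\omega_t}\Ric(\omega)$, this gives
\[
\tfrac{d}{dt}H_\omega(\varphi_t) = -\int_X \dot\varphi_t\,S(\omega_t)\,\omega_t^n + n\int_X \dot\varphi_t\,\Ric(\omega)\wedge\omega_t^{n-1}.
\]

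For $\tfrac{d}{dt}E_\omega(\varphi_t)$, the key input is the general Stokes-type identity that, for any closed form $\alpha$ of complementary bidegree,
\[
\tfrac{d}{dt}\sum_{i=0}^{k}\int_X \varphi_t\,\alpha\wedge\omega^i\wedge\omega_t^{k-i} = (k+1)\int_X \dot\varphi_t\,\alpha\wedge\omega_t^k.
\]
Applied with $\alpha = 1$, $k = n$ to the Aubin--Yau-type piece and with $\alpha = \Ric(\omega)$, $k = n-1$ to the twisted piece, this gives
\[
\tfrac{d}{dt}E_\omega(\varphi_t) = n\mu(X,[\omega])\int_X\dot\varphi_t\,\omega_t^n - n\int_X\dot\varphi_t\,\Ric(\omega)\wedge\omega_t^{n-1}.
\]
Adding the two derivatives, the Ricci terms cancel, leaving $-\int_X\dot\varphi_t(S(\omega_t) - n\mu(X,[\omega]))\omega_t^n$, which is exactly $\tfrac{d}{dt}\scM_\omega(\varphi_t)$ by Definition \ref{Mabuchi1}. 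Integrating from $0$ to $1$ closes the argument.

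The main technical step is the displayed general identity for the ``mixed'' intersection sums: verifying it requires a careful combinatorial integration by parts using closedness of $\alpha$ together with $i\ddbar\varphi_t = \omega_t - \omega$ to telescope. Once that lemma is in hand, the remainder of the proof amounts to a single substitution of $\Ric(\omega_t) - \Ric(\omega) = -i\ddbar\log(\omega_t^n/\omega^n)$, combined with routine applications of Stokes.
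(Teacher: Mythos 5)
Your argument is correct and is the standard derivation of the Chen--Tian decomposition; the paper itself gives no proof of this proposition, citing \cite{XC2,GT2}, and your computation is essentially the one found in those sources. The key combinatorial identity $\tfrac{d}{dt}\sum_{i=0}^{k}\int_X \varphi_t\,\alpha\wedge\omega^i\wedge\omega_t^{k-i} = (k+1)\int_X \dot\varphi_t\,\alpha\wedge\omega_t^k$ checks out (the telescoping after integrating by parts with $i\ddbar\varphi_t=\omega_t-\omega$ leaves exactly $(k+1)\int\dot\varphi_t\,\alpha\wedge\omega_t^k$), and adding the two derivatives does recover $-\int_X\dot\varphi_t(S(\omega_t)-n\mu)\omega_t^n$. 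One cosmetic slip: in the computation of $\tfrac{d}{dt}H_\omega$, the term that vanishes by Stokes is $\int_X\dot f_t\,\omega_t^n=\int_X\Delta_t\dot\varphi_t\,\omega_t^n$, i.e.\ the contribution from differentiating $f_t$, while the surviving term $\int_X\dot\varphi_t\,\Delta_t f_t\,\omega_t^n$ comes from differentiating the measure $\omega_t^n$ followed by self-adjointness of $\Delta_t$; your displayed formula is nevertheless correct.
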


One should compare this to the intersection-theoretic formulation of the Donaldson Futaki invariant given in Proposition \ref{DF-compact-proj}. 

Mabuchi also defined a Riemannian metric on the space $\scH_{\omega}$, which gives a notion of geodesics in $\scH_{\omega}$. 

\begin{definition} We say $\varphi_t\in \scH_{\omega}$ is a \emph{geodesic} if $$ \ddot\varphi_t -\frac{1}{2}|\nabla \dot\varphi |_t^2 = 0,$$ where the norm and Riemannian gradient are taken with respect to the Riemannian metric induced by $\omega_{\varphi_t}$.\end{definition}

As the space $\scH_{\omega}$ is infinite dimensional, smooth geodesics do not necessarily exist, as one is solving a PDE rather than an ODE. However, provided one interprets the equation appropriately, weak solutions often exist. 

Note that a path of K\"ahler potentials $\varphi_t$ as above is a smooth function on $X\times [0,1]$. It therefore extends to a smooth function $\Phi$ on the manifold $X\times\Delta$, where $\Delta\subset\C$ is the (closed) unit disc, by assuming the extension is radially symmetric on $\Delta$. Now let $\pi_1: X\times \Delta\to X$ be the projection onto the first factor, and set $$\Omega_{\Phi} = \pi_1^*\omega + i\ddbar \Phi.$$

\begin{proposition}\label{semmes-donaldson}\cite{SS1,SS2,SD3} The path $\varphi_t$ is a geodesic if and only if \begin{equation}\label{volume-geodesic}\Omega_{\Phi}^{n+1}=0\end{equation} on $X\times\Delta$.\end{proposition}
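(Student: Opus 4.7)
The plan is to reduce the statement to a pointwise algebraic identity by exploiting the radial symmetry of $\Phi$. On $X \times \Delta$, choose local coordinates $(x^1,\ldots,x^n)$ on $X$ and $z$ on $\Delta$, and parametrise $t$ through $|z|$ (for instance via $t$ depending only on $s = \log |z|^2$), so that $\Phi(x,z) = \varphi_{t(|z|)}(x)$ and every $z$-derivative of $\Phi$ can be rewritten in terms of $\dot\varphi_t$ and $\ddot\varphi_t$ via the chain rule.

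First I would express $\Omega_\Phi$ in block form at a point. Since $\pi_1^*\omega$ has no $z$-components, the Hermitian matrix of $\Omega_\Phi$ is
\[
\begin{pmatrix} (\omega_{\varphi_t})_{i\bar j} & \Phi_{i\bar z} \\ \Phi_{z\bar j} & \Phi_{z\bar z} \end{pmatrix},
\]
where $\omega_{\varphi_t} = \omega + i\partial_X\bar\partial_X \Phi$ is the K\"ahler form on the fibre. Wedging $(n+1)$ times yields, up to a fixed dimensional constant times $\omega_{\varphi_t}^n \wedge \tfrac{i}{2}dz\wedge d\bar z$, the determinant of the above block matrix. By the Schur complement identity, and using that $\omega_{\varphi_t}>0$ (so its inverse $(\omega_{\varphi_t})^{\bar j i}$ is well defined at every point of $X \times \Delta$), this determinant factors as
\[
\det(\omega_{\varphi_t})\, \bigl( \Phi_{z\bar z} - \Phi_{z\bar j}\, (\omega_{\varphi_t})^{\bar j i}\, \Phi_{i\bar z} \bigr).
\]
Hence $\Omega_\Phi^{n+1} = 0$ pointwise if and only if $\Phi_{z\bar z} = \Phi_{z\bar j}\, (\omega_{\varphi_t})^{\bar j i}\, \Phi_{i\bar z}$.

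Next I would plug in the radial dependence. A direct chain-rule computation shows that both $\Phi_{z\bar z}$ and the quadratic term on the right acquire the same factor depending only on $|z|$, so that after cancellation the pointwise equation collapses to
\[
\ddot\varphi_t \;=\; \tfrac{1}{2}\,|\nabla \dot\varphi_t|^2_{\omega_{\varphi_t}},
\]
which is exactly the geodesic equation of the Mabuchi metric (the factor $1/2$ comes from converting between the complex gradient $(\omega_{\varphi_t})^{\bar j i}(\dot\varphi_t)_{\bar j}(\dot\varphi_t)_i$ and the Riemannian gradient of $\dot\varphi_t$). Conversely, reversing these computations shows that the geodesic equation implies the pointwise vanishing of $\Omega_\Phi^{n+1}$ on the whole of $X\times \Delta$, since radial symmetry ensures no further conditions are imposed at points where $z\neq 0$ versus $z = 0$.

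The main technical nuisance, rather than any real obstacle, is bookkeeping the various factors of $\tfrac{1}{2}$ and $|z|^{-2}$ arising from switching between the $t$ variable and the complex coordinate $z$, and matching the conventions relating the complex gradient in $(\omega_{\varphi_t})^{\bar j i} f_i f_{\bar j}$ to the Riemannian squared gradient $|\nabla f|^2$ used in the definition of geodesic. The only non-bookkeeping point to verify is that the reduction of the vanishing of $\Omega_\Phi^{n+1}$ to the scalar Schur-complement equation is valid throughout $X\times \Delta$, which uses precisely that $\omega_{\varphi_t}$ is positive on each fibre, i.e.\ that $\varphi_t \in \mathcal{H}_\omega$ for every $t$.
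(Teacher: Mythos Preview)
The paper does not supply a proof of this proposition; it is simply cited as a classical result of Semmes and Donaldson. Your sketch is the standard argument one finds in those references: express $\Omega_\Phi^{n+1}$ via the block determinant, apply the Schur complement using positivity of $\omega_{\varphi_t}$, and identify the resulting scalar equation with the geodesic equation after unwinding the radial chain rule.

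One point worth tightening: the cancellation you invoke, namely that $\Phi_{z\bar z}$ and the quadratic term acquire the \emph{same} $|z|$-dependent prefactor, is not automatic for an arbitrary radial parametrisation $t = t(|z|)$. With $t = |z|$ (which is what the paper's phrasing literally suggests) one has $\partial_z\partial_{\bar z}\,t \neq 0$, producing an extra first-order term $\tfrac{1}{t}\dot\varphi_t$ that does not match the geodesic equation. The clean equivalence holds precisely when $t$ is an affine function of $\log|z|$, so that $\partial_z\partial_{\bar z}\,t = 0$; you gesture at this with ``for instance via $t$ depending only on $s = \log|z|^2$'', but you should state explicitly that this choice is forced, not merely convenient. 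Relatedly, with the logarithmic coordinate the centre $z=0$ is pushed to infinity, so your remark about ``$z=0$ versus $z\neq 0$'' becomes moot rather than something requiring a separate check.
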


The geodesic then becomes a degenerate Monge-Amp\`ere equation, and this reformulation immediately furnishes a notion of a weak geodesic.

\begin{definition} We say that a path $\varphi_t$ of plurisubharmonic functions is a \emph{weak geodesic} if it satisfies equation (\ref{volume-geodesic}) in the sense of pluripotential theory. \end{definition}

The following result of Chen proves an important regularity property of solutions of the geodesic equation.

\begin{theorem}\cite{XC} Weak geodesics are automatically $C^{1,\bar{1}}$ regular, i.e. $\ddbar \varphi_t \in L^{\infty}_{loc}$. \end{theorem}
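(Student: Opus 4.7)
The plan is to attack the degenerate Monge--Amp\`ere equation $\Omega_{\Phi}^{n+1}=0$ on $X\times A$, where $A\subset\C$ is an annulus cut out by the radial symmetrisation, via the continuity method. Given boundary data $\varphi_0,\varphi_1\in\scH_{\omega}$, I would first reduce to the Dirichlet problem on $X\times A$ with these as boundary values. The degeneracy of the right-hand side makes direct PDE theory fail, so I would regularise by considering
\[
\Omega_{\Phi_{\epsilon}}^{n+1} \;=\; \epsilon\,\Omega_0^{n+1}
\]
for a fixed reference K\"ahler form $\Omega_0=\pi_1^*\omega+i\ddbar\rho$ on $X\times A$ (with $\rho$ strictly plurisubharmonic in the disc variable), imposing the same boundary data. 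For each fixed $\epsilon>0$ the right-hand side is a smooth positive volume form, so Caffarelli--Kohn--Nirenberg--Spruck / Yau-type theory gives a smooth plurisubharmonic solution $\Phi_{\epsilon}$.

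The heart of the argument is to establish a priori estimates on $\Phi_{\epsilon}$ that are \emph{independent of} $\epsilon$. First, a $C^0$ bound follows from the maximum principle by sandwiching $\Phi_{\epsilon}$ between harmonic-type sub- and super-solutions built from the boundary data. Next, a boundary gradient estimate comes from constructing explicit plurisubharmonic barriers that agree with the boundary data on $\partial(X\times A)$; since the equation is rotation-invariant in $z$ and the boundary data are radially symmetric and smooth, one gets $|\nabla\Phi_{\epsilon}|\le C$ on the boundary. The interior $C^1$ bound is then obtained from the boundary bound together with a Blocki-type application of the maximum principle to $|\nabla\Phi_{\epsilon}|^2$. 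The crux is the Laplacian estimate: apply the maximum principle to a quantity of the shape $\log\tr_{\Omega_0}\Omega_{\Phi_{\epsilon}}-A\Phi_{\epsilon}$. Expanding with the equation $\Omega_{\Phi_{\epsilon}}^{n+1}=\epsilon\,\Omega_0^{n+1}$ and using the bisectional curvature of $\Omega_0$ in the standard Aubin--Yau manner, one can absorb bad terms into $A\,\tr_{\Omega_{\Phi_{\epsilon}}}\Omega_0$ and conclude
\[
\tr_{\Omega_0}\Omega_{\Phi_{\epsilon}} \;\le\; C\bigl(1+\sup_{\partial}\tr_{\Omega_0}\Omega_{\Phi_{\epsilon}}\bigr),
\]
with $C$ depending only on $\|\Phi_{\epsilon}\|_{C^0}$ and the geometry, not on $\epsilon$. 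The boundary Laplacian bound itself comes from a more delicate barrier calculation exploiting the product structure near $\partial A$.

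With these uniform estimates in hand, the sequence $\{\Phi_{\epsilon}\}$ has uniformly bounded $i\ddbar$, so up to a subsequence converges weakly to a plurisubharmonic $\Phi$ with $i\ddbar\Phi\in L^{\infty}_{\mathrm{loc}}$; stability of the complex Monge--Amp\`ere operator under weak convergence of plurisubharmonic functions with uniform $C^{1,\bar 1}$ bound (Bedford--Taylor) identifies the limit as a weak solution of $\Omega_{\Phi}^{n+1}=0$ with the prescribed boundary values. Uniqueness of the weak solution to this Dirichlet problem (also via the comparison principle of pluripotential theory) then shows that \emph{every} weak geodesic connecting $\varphi_0$ and $\varphi_1$ coincides with $\Phi$ and hence is $C^{1,\bar{1}}$.

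The main obstacle is unquestionably the Laplacian estimate: standard Aubin--Yau style estimates require a positive lower bound on $\det(\Omega_{\Phi_{\epsilon}})/\det(\Omega_0)$, which degenerates as $\epsilon\to 0$. The subtlety is to organise the maximum principle so that only the \emph{upper} bound on this ratio (which stays bounded by $\epsilon\le 1$) enters, while the negative contribution from the lower bound is controlled by the $\tr_{\Omega_{\Phi_{\epsilon}}}\Omega_0$ term arising from the $-A\Phi_{\epsilon}$ in the test function. Getting this cancellation cleanly, and matching it with a boundary Laplacian bound of the same form, is the technical core of the proof and is exactly the content of Chen's original argument.
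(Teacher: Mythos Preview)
The paper does not prove this theorem at all: it is stated as a cited result of Chen \cite{XC} and used as a black box, with no argument given. Your sketch is therefore not being compared against anything in the paper, but it is a faithful outline of Chen's original continuity method---regularising to $\Omega_{\Phi_\epsilon}^{n+1}=\epsilon\,\Omega_0^{n+1}$, establishing $\epsilon$-independent $C^0$, gradient, and Laplacian bounds via barrier constructions and an Aubin--Yau style maximum principle, and passing to the limit using Bedford--Taylor theory. Your identification of the Laplacian estimate as the crux, and of the need to arrange the test function so that only the upper bound on the volume ratio enters, is exactly right.
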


$C^{1,\bar{1}}$ regularity is weaker than being $C^{1,1}$ in the usual H\"older sense, but implies $\varphi_t$ is in the H\"older space $C^{1,\alpha}$ for all $\alpha<1$. The above regularity result cannot, in general, be improved \cite{LV}. 

Note that the explicit formulation of the Mabuchi functional given in Proposition \ref{chen-tian-formula} implies that the Mabuchi function extends in a natural way to $C^{1,\bar{1}}$ potentials. The key property of the Mabuchi functional that we will need is the following deep result of Berman-Berndtsson. 

\begin{theorem}\cite{BB} The Mabuchi functional is continuous and convex along $C^{1,\bar{1}}$ geodesics.\end{theorem}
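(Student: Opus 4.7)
The plan is to decompose the functional via the Chen--Tian formula of Proposition \ref{chen-tian-formula} as $\scM_\omega = H_\omega + E_\omega$ and handle the energy piece $E_\omega$ and the entropy piece $H_\omega$ separately. Throughout, I would identify a $C^{1,\bar 1}$ geodesic $\{\varphi_t\}$ with the $S^1$-invariant plurisubharmonic function $\Phi$ on $X\times\Delta$ of Proposition \ref{semmes-donaldson}, so that $\Omega_\Phi = \pi_1^*\omega + i\ddbar\Phi$ is a closed positive $(1,1)$-current on $X\times\Delta$ satisfying $\Omega_\Phi^{n+1}=0$.

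For the energy piece $E_\omega$, each summand in the expression of Proposition \ref{chen-tian-formula} is of the form $\int_X \varphi\,\omega^i\wedge\omega_\varphi^{n-i}$ or $\int_X \varphi\,\Ric\omega\wedge\omega^i\wedge\omega_\varphi^{n-1-i}$. Using the identity $\dot\varphi_t \,\omega_{\varphi_t}^k = \tfrac{1}{k+1}\partial_t \omega_{\varphi_t}^{k+1} / \partial_t \varphi_t$ after integration by parts on $X\times\Delta$, one rewrites the $t$-derivative of each summand as an integral of $\pi_1^*\alpha \wedge \Omega_\Phi^j$ over $X\times\Delta$, where $\alpha$ is a smooth form built from $\omega$ and $\Ric\omega$. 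The geodesic constraint $\Omega_\Phi^{n+1}=0$ combined with positivity of $\Omega_\Phi$ then shows that each such term is affine (in fact, linear) in $t$, so $E_\omega$ is affine along the geodesic, hence trivially convex and continuous.

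The main obstacle is the entropy $H_\omega(\varphi) = \int_X \log(\omega_\varphi^n/\omega^n)\,\omega_\varphi^n$. Here I would use Legendre duality with respect to the reference measure $\omega^n$: for every $\varphi\in \scH_\omega^{1,\bar 1}$,
\[
H_\omega(\varphi) \;=\; \sup_{f \in C^0(X)} \Bigl( \int_X f\,\omega_\varphi^n \;-\; V\log\!\int_X \tfrac{1}{V}\,e^f\,\omega^n \Bigr).
\]
Convexity of $H_\omega$ along the geodesic is reduced to proving that for every fixed $f\in C^0(X)$ the function
\[
t \;\longmapsto\; \int_X f\,\omega_{\varphi_t}^n
\]
is convex (since the second term in the supremum is independent of $t$, and the supremum of a family of convex functions is convex). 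This is a Berndtsson-type positivity of direct images: the pushforward of $\Omega_\Phi^n$ under $\pi_2: X\times\Delta\to\Delta$ is a positive current, and $\Omega_\Phi^{n+1}=0$ ensures that pairing it against $\pi_1^*(f\,\omega^0)$ yields a subharmonic, and by $S^1$-invariance convex in $\log|t|$, function on $\Delta$. The delicate point is justifying these integrations by parts and the distributional identities in the low regularity class $C^{1,\bar 1}$, which is handled by approximating $\Phi$ by smooth strictly plurisubharmonic solutions of the perturbed equation $\Omega^{n+1} = \varepsilon\,(\pi_1^*\omega)^{n+1}$ (which exist by Chen's theorem and converge in $C^{1,\bar 1}$), proving convexity of the integrals along the smooth approximants, and passing to the limit using uniform bounds on $\omega_{\varphi_t}^n$.

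Continuity of $\scM_\omega$ along the geodesic will follow formally from convexity once boundary continuity is established: a convex function on $[0,1]$ is automatically continuous on $(0,1)$, and continuity at $t=0,1$ reduces to the lower semicontinuity of $H_\omega$ under $C^{1,\bar 1}$ convergence combined with a matching upper bound. For the lower semicontinuity one uses that for each fixed $f$ in the variational principle, $\varphi\mapsto \int_X f\,\omega_\varphi^n$ is continuous in $C^{1,\bar 1}$ (since $\omega_{\varphi_t}^n$ converges weakly as measures with uniformly $L^\infty$ densities), so $H_\omega$ is a supremum of continuous functionals. The upper bound is obtained by truncating $f$ in the variational formula and applying dominated convergence using the uniform $L^\infty$ control of $i\ddbar\varphi_t$. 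The energy $E_\omega$ is continuous along $C^{1,\bar 1}$ paths by the same weak convergence of $\omega_{\varphi_t}^n$, completing the proof.
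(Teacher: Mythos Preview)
The paper does not prove this theorem; it simply quotes it from \cite{BB}. So there is no ``paper's proof'' to compare against, and the question is whether your sketch stands on its own.

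The overall architecture (Chen--Tian decomposition, the energy $E_\omega$ being affine along the geodesic, and attacking the entropy $H_\omega$ via a Legendre-type duality) is indeed the shape of the Berman--Berndtsson argument. The energy part is fine. The gap is in your treatment of the entropy. You reduce convexity of $H_\omega$ to the claim that for every $f\in C^0(X)$ the map
\[
t \;\longmapsto\; \int_X f\,\omega_{\varphi_t}^n
\]
is convex, and you justify this by asserting that $\Omega_\Phi^{n+1}=0$ forces $\pi_{2*}(\pi_1^*f\cdot\Omega_\Phi^n)$ to be subharmonic. This is false. Since $\Omega_\Phi$ is closed, one has $i\ddbar\,\pi_{2*}(\pi_1^*f\cdot\Omega_\Phi^n)=\pi_{2*}(\pi_1^*(i\ddbar f)\wedge\Omega_\Phi^n)$, and $i\ddbar f$ has no sign. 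Concretely, along a smooth geodesic in dimension $n=1$ one computes
\[
\frac{d^2}{dt^2}\int_X f\,\omega_{\varphi_t} \;=\; \int_X \ddot\varphi_t\, i\ddbar f,
\]
with $\ddot\varphi_t = \tfrac12|\nabla\dot\varphi_t|_t^2\ge 0$ from the geodesic equation; choosing $f$ with $i\ddbar f<0$ on the support of $\ddot\varphi_t$ makes this negative. (Equivalently: if your claim held for all $f$ it would also hold for $-f$, forcing $t\mapsto\int_X f\,\omega_{\varphi_t}^n$ to be affine for every $f$, hence $\omega_{\varphi_t}^n$ to depend affinely on $t$ as measures, which fails for generic geodesics.) The constraint $\Omega_\Phi^{n+1}=0$ only says the null direction of $\Omega_\Phi$ is one-dimensional and transverse to the fibres; it does not kill the mixed terms coming from $i\ddbar f$.

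What Berman--Berndtsson actually do for the entropy is more delicate: they exploit that $\log(\omega_\varphi^n/\omega^n)$ is a potential for $\Ric\omega-\Ric\omega_\varphi$ and bring in positivity of direct images of $K_X$-valued forms (local Bergman kernels), not of $\Omega_\Phi^n$ against arbitrary test functions. Your Legendre duality rewriting is correct, but the convexity input you feed into it is the wrong one; the missing idea is precisely this Bergman-kernel/direct-image positivity for the \emph{canonical} bundle, which has no analogue for a bare continuous $f$.
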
  This is an observation of Mabuchi when the geodesic is smooth \cite{TM}. 

Given a K\"ahler test configuration for a K\"ahler manifold $(X,[\omega])$, one can canonically associate a $C^{1,\bar{1}}$-geodesic emanating from $\omega$, in a similar manner to Proposition \ref{semmes-donaldson} \cite{PS,RB,RB2,ZSD}. Let $\X_{\Delta}$ be the pre-image of $\Delta\subset\C$ in $\X$. 

\begin{proposition}\label{geodesic-regularity} Let $(\X,\scA)$ be a test configuration. The equation $$\co^{n+1}=0$$ admits a unique $S^1$-invariant solution on $\X_{\Delta}$ such that $\co|_{\partial\Delta} = \omega$ and $\co\in \scA$. Moreover, $\co$ is $C^{1,\bar{1}}$ on $\X\backslash \{\X_0\}$. \end{proposition}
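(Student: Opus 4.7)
The plan is to recast the equation $\co^{n+1}=0$ as a Dirichlet problem for the homogeneous complex Monge--Amp\`ere equation on $\X_{\Delta}$. First I would fix a smooth $S^1$-invariant reference form $\co_0 \in \scA$ whose restriction to $\X_t$ agrees with $\omega$ for all $t$ with $|t|=1$; this is possible after using the trivialization $\X\setminus \X_0 \cong X \times \C^*$ induced by the $\C^*$-action to pull back $\omega$, and patching with an arbitrary K\"ahler representative of $\scA$ via a cutoff in a neighbourhood of $\X_0$. Any other representative of $\scA$ is of the form $\co = \co_0 + i\ddbar \Phi$ for an $S^1$-invariant function $\Phi$, by the $\partial\bar\partial$-lemma on $\X_\Delta$ (available on the normal K\"ahler space $\X$, or after passing to an equivariant resolution). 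The problem then becomes finding $\Phi$ with $\Phi|_{\partial \X_\Delta}=0$ solving the degenerate Monge--Amp\`ere equation $(\co_0 + i\ddbar\Phi)^{n+1}=0$ in the pluripotential sense.

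For existence I would use the Perron envelope
\[
\Phi = \sup\left\{ \psi :\ \psi \text{ is } S^1\text{-invariant and } \co_0\text{-psh on } \X_{\Delta},\ \limsup_{z\to\partial \X_{\Delta}}\psi(z)\leq 0\right\}.
\]
Standard pluripotential theory (Bedford--Taylor, extended to K\"ahler analytic spaces) shows the upper semicontinuous regularisation is $\co_0$-psh and $S^1$-invariant, and $\psi(z)=-A(1-|t(z)|^2)$ provides a subbarrier forcing the correct boundary values, after which a balayage argument shows the Monge--Amp\`ere measure vanishes on $\X_{\Delta}\setminus \X_0$. Uniqueness follows from the Bedford--Taylor comparison principle applied to two $S^1$-invariant solutions with identical boundary data. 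The $C^{1,\bar 1}$ regularity on $\X\setminus \X_0$ is then a consequence of Chen's theorem: via the $\C^*$-equivariance, $\X\setminus\X_0$ is biholomorphic to $X\times\C^*$, and on any annulus the restriction of $\co$ is an $S^1$-invariant solution of the Monge--Amp\`ere geodesic equation (\ref{volume-geodesic}) connecting two smooth K\"ahler potentials on $X$, to which Chen's regularity applies directly.

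The main obstacle is the existence and maximum-principle step on the possibly singular analytic space $\X$: one has to check that the envelope construction and comparison principle behave correctly across the singular locus. The cleanest way to handle this is to pass to an equivariant resolution $p:\Y\to\X$, solve the Dirichlet problem for $p^*\co_0$ on $\Y_{\Delta}$ (where the space is smooth and standard Bedford--Taylor--Guedj--Zeriahi theory applies), and descend using that the resulting potential is $\C^*$-invariant and constant on the fibres of $p$ (which are contracted to points in $\X_0$), so that it pushes down to an $S^1$-invariant $\co_0$-psh function on $\X_\Delta$ solving the equation in the pluripotential sense. Since $p$ is an isomorphism away from the singular locus, the $C^{1,\bar{1}}$-regularity on $\X\setminus \X_0$ is unaffected.
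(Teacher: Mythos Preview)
The paper does not give its own proof of this proposition: it is quoted as a known result, with the citations \cite{PS,RB,RB2,ZSD} (Phong--Sturm, Berman, Sj\"ostr\"om Dyrefelt) placed just before the statement. So there is no ``paper's proof'' to compare against, only the literature it points to.

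Your outline is broadly in line with what those references do: reduce to a Dirichlet problem for the homogeneous complex Monge--Amp\`ere equation with respect to a smooth reference $\Omega_0\in\scA$, construct the solution as a Perron envelope, obtain uniqueness from the comparison principle, and invoke Chen's $C^{1,\bar 1}$ estimates away from the central fibre. Passing to an equivariant resolution to sidestep the singularities of $\X$ is also the standard manoeuvre in the K\"ahler setting.

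One point to tighten: in the regularity step you say that on an annulus in $\C^*$ the solution is a geodesic ``connecting two smooth K\"ahler potentials''. But $\X_\Delta$ has only the single boundary $|t|=1$; if you restrict to an annulus $\{r\le |t|\le 1\}$ the inner boundary data at $|t|=r$ is precisely the unknown and carries no a priori smoothness. What actually gives $C^{1,\bar 1}$ on $\X\setminus\X_0$ is that Chen's (and B{\l}ocki's) Laplacian estimates are interior estimates once one has a bounded psh potential and smooth data on the outer boundary; equivalently one approximates by non-degenerate equations $(\Omega_0+i\partial\bar\partial\Phi_\epsilon)^{n+1}=\epsilon\,\Omega_0^{n+1}$ and passes to the limit. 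Phrase the argument that way rather than as a two-endpoint geodesic.
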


Taking also a smooth relatively K\"ahler metric $\eta\in\scA$, we therefore have two paths in the space of (possibly singular) K\"ahler metrics in $[\omega]$ obtained by setting \begin{align*} \Omega_1 - \alpha(t)^*\Omega_t &= i\ddbar \varphi_t, \\ \eta_1 - \alpha(t)^*\eta_t &= i\ddbar \psi_t.\end{align*} It will also be useful to set $\omega_t = \alpha(t)^*\co_t$.  The path $\psi_t$ is often called a ``subgeodesic'' in the literature. It is also convenient to set $s = -\log |t|^2$, so that $s\to\infty$ corresponds to $t\to 0$. 

\begin{theorem}\cite{ZSD,DR,BHJ2}\label{Mabuchi-expansion} Suppose $\X$ is smooth. Then the Mabuchi functional satisfies $$\lim_{s\to\infty} \frac{\scM(\varphi_{s})}{s} = M^{NA}(\X,\scA).$$ The same expansion holds using instead $\psi_s$. Here $M^{NA}(\X,\scA)$ is the non-Archimedean Mabuchi functional of $(\X,\scA)$ \cite{ZSD,BHJ1}, which satisfies $M^{NA}(\X,\scA)\leq \DF(\X,\scA)$ with equality if and only if $\X_0$ is reduced. \end{theorem}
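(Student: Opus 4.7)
The plan is to first prove the expansion along the smooth subgeodesic $\psi_s$, where explicit computation is possible, and then transfer the statement to the $C^{1,\bar{1}}$ geodesic $\varphi_s$ by an asymptotic comparison. The starting point is the Chen-Tian decomposition $\scM_\omega = H_\omega + E_\omega$ from Proposition \ref{chen-tian-formula}, so it suffices to compute the asymptotic slope of each piece separately.

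For the energy $E_\omega(\psi_s)$, I would use the relation $\eta_1 - \alpha(s)^*\eta_s = i\ddbar\psi_s$ to reinterpret $E_\omega(\psi_s)$ as an integral over a single fibre of the test configuration. The functional $E_\omega$ is polynomial in $\omega_\varphi$, so its $s$-derivative has a clean expression in terms of $\dot\psi_s$ and $\alpha(s)^*\eta_s$. Integrating from $0$ to $s$, pushing the result forward through $\pi: \scX\to\pr^1$, and using that the $S^1$-invariant families extend to smooth forms on the compactification, the slope $\lim_{s\to\infty} E_\omega(\psi_s)/s$ becomes an intersection pairing on $\X$, giving exactly the energy part $\frac{n}{n+1}\mu(X,[\omega])\scA^{n+1}$ of the intersection formula.

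For the entropy $H_\omega(\psi_s) = \int_X\log(\eta_s^n/\omega^n)\,\eta_s^n$, the strategy is to pass to a log resolution $p:\scY\to\scX$ whose central fibre $\scY_0 = \sum m_i E_i$ has simple normal crossings, and analyse the singular behaviour of $\eta_s^n/\omega^n$ near $\scY_0$. Expressing the relative volume form in local coordinates adapted to $\scY_0$ produces logarithmic poles of order controlled by the log canonical class $K_{\scY/\pr^1} + \scY_{0,\text{red}}$, so that $H_\omega(\psi_s)/s$ converges to $-(K_{\scY/\pr^1} + \scY_{0,\text{red}} - \scY_0).(p^*\scA)^n$, the ``log discrepancy'' contribution. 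Summing energy and entropy yields the intersection formula for $M^{NA}(\X,\scA)$, and subtracting gives $\DF(\X,\scA) - M^{NA}(\X,\scA) = (\scY_0 - \scY_{0,\text{red}}).(p^*\scA)^n = \sum(m_i-1)E_i.(p^*\scA)^n \geq 0$, since $\scA$ is relatively K\"ahler and each $E_i$ is an effective vertical divisor; equality holds precisely when all $m_i = 1$, i.e. when $\X_0$ is reduced.

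Finally, to transfer from $\psi_s$ to the $C^{1,\bar{1}}$ geodesic $\varphi_s$, I would use Berman-Berndtsson convexity of $\scM_\omega$ along $\varphi_s$ together with the fact that $\psi_s - \varphi_s$ is bounded on $X$ (both solve a Dirichlet problem for the same boundary data on $\partial\Delta$, and the geodesic is the envelope of subgeodesics). Continuity of $\scM_\omega$ under appropriate convergence, plus convexity, forces the two slopes to agree. The main obstacle throughout is the entropy analysis: because geodesics are only $C^{1,\bar{1}}$ and the central fibre is generally very singular, one cannot freely differentiate or integrate by parts, and rigorously identifying the limit of $H_\omega(\psi_s)/s$ with the log discrepancy intersection requires invoking the pluripotential-theoretic machinery of \cite{BHJ2} and the convexity results of \cite{BB}.
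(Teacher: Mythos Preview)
The paper does not give its own proof of this theorem: it is stated as a cited result from \cite{ZSD,DR,BHJ2}, and the surrounding text only remarks that the geodesic version is due to Sj\"ostr\"om Dyrefelt \cite{ZSD} while the smooth-path version is in \cite{DR,ZSD}. So there is no in-paper argument to compare your proposal against.

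That said, your outline is a faithful sketch of the strategy actually used in those references. The Chen--Tian splitting into energy and entropy, the identification of the energy slope with the intersection-theoretic piece via fibre integration on the compactification, the log-resolution analysis of the entropy yielding the discrepancy term $(\scY_0-\scY_{0,\mathrm{red}}).(p^*\scA)^n$, and the transfer from the smooth subgeodesic to the $C^{1,\bar 1}$ geodesic via convexity and boundedness of $\psi_s-\varphi_s$ --- these are exactly the ingredients in \cite{ZSD,BHJ2,DR}. Your identification of the equality case is also correct: the difference $\DF-M^{NA}$ is a sum of nonnegative terms vanishing precisely when all multiplicities are one.

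One point worth flagging: your claim that ``$\psi_s-\varphi_s$ is bounded on $X$'' is the delicate step, and the correct formulation is slightly more subtle. The geodesic is the upper envelope of subgeodesics with the given boundary data, so one has $\varphi_s\geq\psi_s - C$ for a suitable normalisation, but the reverse bound (needed to control the slope from both sides) requires either a direct estimate on the geodesic near the central fibre or the convexity argument you mention. In \cite{ZSD} this is handled carefully; your proposal correctly identifies this as the main technical obstacle rather than glossing over it.
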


The version of the above result using the geodesic is due to Sj\"ostr\"om Dyrefelt \cite{ZSD}. We expect the above expansion should hold along the geodesic even in the case that $\X$ is just normal; this is not even known in the projective case. This expansion \emph{does} hold for normal $\X$ along the smooth path \cite{DR,ZSD}, though we will not use this. Results of this form go back to the seminal work of Tian in the setting of K\"ahler-Einstein metrics \cite{GT}; we refer to \cite{DR} for a more extensive bibliography.

We will also use the concept of the norm of a geodesic. %We denote $\hat\varphi_t = \frac{1}{V}\int_X \dot\varphi_t\co_t^n$.

\begin{definition} We define the \emph{norm} of a geodesic $\varphi_t$ to be $$\|\varphi_t\|^p_p = \int_X |\dot\varphi_t - \hat\varphi_t|^p\omega_t^n,$$ where $\hat \varphi_t = \frac{1}{V}\int_{X}\dot \varphi_t \omega_t^n$.\end{definition}

The following justifies that this is indeed the norm of the geodesic, rather than just the potential.

\begin{lemma}\label{Berndtsson-norm}\cite[Lemma 2.1]{BoB} The value $\|\varphi_t\|_p$ is independent of $t$. Hence if $\|\varphi_t\|_p=0$ for one (equivalently any) $p$, then $$\varphi_t=t\varphi+\varphi_0,$$ where $\varphi,\varphi_0\in C^{\infty}(X,\R)$. \end{lemma}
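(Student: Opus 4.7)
The natural approach is to show the stronger statement that for any smooth function $F : \R \to \R$, the quantity $I_F(t) := \int_X F(\dot\varphi_t)\,\omega_t^n$ is independent of $t$ along a smooth geodesic. Granting this, taking $F(s)=s$ yields that $V\hat\varphi_t = \int_X \dot\varphi_t\,\omega_t^n$ is constant in $t$, so $\hat\varphi_t \equiv \hat\varphi$ is independent of $t$. Then taking $F(s) = |s-\hat\varphi|^p$ immediately gives that $\|\varphi_t\|_p^p = I_F(t)$ is independent of $t$, as desired.

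To establish $\frac{d}{dt}I_F(t) = 0$, write $u=\dot\varphi_t$. Differentiating and using $\frac{d}{dt}\omega_t^n = n\,i\ddbar u \wedge \omega_t^{n-1}$ gives
\[
\tfrac{d}{dt}I_F(t) \;=\; \int_X F'(u)\,\dot u\,\omega_t^n + n\int_X F(u)\,i\ddbar u\wedge \omega_t^{n-1}.
\]
Integrating by parts twice in the second term (using compactness of $X$ and the identity $i\ddbar F(u) = F''(u)\,i\partial u \wedge \bar\partial u + F'(u)\,i\ddbar u$, modulo sign conventions) produces
\[
n\int_X F(u)\,i\ddbar u\wedge \omega_t^{n-1} \;=\; -n\int_X F'(u)\,i\partial u\wedge\bar\partial u\wedge \omega_t^{n-1} \;=\; -\tfrac12\int_X F'(u)\,|\nabla u|^2_{\omega_t}\,\omega_t^n,
\]
where the second equality is the standard pointwise identity $n\,i\partial u\wedge\bar\partial u\wedge\omega_t^{n-1} = \tfrac12|\nabla u|^2_{\omega_t}\omega_t^n$. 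Substituting and invoking the geodesic equation $\dot u = \tfrac12|\nabla u|_{\omega_t}^2$ gives
\[
\tfrac{d}{dt}I_F(t) \;=\; \int_X F'(u)\bigl(\dot u - \tfrac12|\nabla u|_{\omega_t}^2\bigr)\omega_t^n \;=\; 0.
\]

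The main technical obstacle is that weak geodesics are only $C^{1,\bar 1}$ in general, so the above pointwise identities and integration-by-parts must be justified in a weaker sense. I would handle this either by approximating the weak geodesic by Chen's $\varepsilon$-geodesics $\{\varphi_t^\varepsilon\}$ (which are smooth solutions of $\Omega^{n+1}_{\Phi^\varepsilon} = \varepsilon\,\pi_1^*\omega^n \wedge i\,dt\wedge d\bar t$ and converge in $C^{1,\alpha}$ to $\varphi_t$), passing to the limit using the uniform bound $i\ddbar\varphi_t \in L^\infty_{\mathrm{loc}}$, or—following Berndtsson—by reinterpreting both sides through the Duistermaat--Heckman-type pushforward measure $(\dot\varphi_t)_*\omega_t^n$ on $\R$, which turns out to be independent of $t$ and expresses $\|\varphi_t\|_p^p$ as a fixed moment of this measure. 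Either route is standard but requires care with the boundary contribution from $\{\X_0\}$ and with the non-smoothness of $u$.

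For the second statement, if $\|\varphi_{t_0}\|_p = 0$ for some $t_0$ and some $p$, then $\dot\varphi_{t_0} = \hat\varphi$ almost everywhere on $X$; by the $C^{1,\bar 1}$ regularity of the geodesic, $\dot\varphi_{t_0}$ is continuous, hence $\dot\varphi_{t_0} \equiv \hat\varphi$ on $X$. In particular $|\nabla \dot\varphi_{t_0}|^2_{\omega_{t_0}} = 0$, so the geodesic equation forces $\ddot\varphi_{t_0} = 0$. Combined with time-independence of the norm (applied to all $p$), one gets $\dot\varphi_t \equiv \hat\varphi$ for all $t$, and integrating in $t$ yields $\varphi_t = t\hat\varphi + \varphi_0$. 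Taking $\varphi := \hat\varphi \in \R \subset C^\infty(X,\R)$ and $\varphi_0$ the initial datum gives the stated form; the norm then vanishes for every $p$, giving the claimed equivalence.
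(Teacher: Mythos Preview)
The paper does not supply its own proof of this lemma: it is quoted from Berndtsson \cite[Lemma 2.1]{BoB}, and the only argument the paper adds is the one-line observation immediately after the statement, namely that ``the second part of the lemma follows from the first by taking $p=2$'' (i.e.\ once $\|\varphi_t\|_p=0$ for all $t$, vanishing of the $L^2$-norm forces $\dot\varphi_t\equiv\hat\varphi$, and one integrates in $t$). Your derivation of the second part is exactly this reasoning, spelled out in more detail; the aside about $\ddot\varphi_{t_0}=0$ is unnecessary (and delicate at $C^{1,\bar 1}$ regularity), but you do not actually use it, so no harm is done.

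For the first part, your argument---showing $\int_X F(\dot\varphi_t)\,\omega_t^n$ is constant along the geodesic via integration by parts and the geodesic equation, then handling the weak regularity by $\varepsilon$-geodesic approximation or via the pushforward measure---is precisely the standard route and is, in essence, Berndtsson's own proof. So your proposal is correct and aligned with what the paper cites; there is nothing in the paper to compare against beyond the remark you have already reproduced.
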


Note that the second part of the lemma follows from the first by taking $p=2$. One consequence of the above is that $\int_{X}\dot \varphi_t \omega_t$ is constant along the geodesic. We then set $$\hat \varphi = \frac{1}{V}\int_{X}\dot \varphi_t \omega_t^n.$$

\subsection{The Calabi functional}\label{calabi-sec}

Denote by $\mathbb{T}$ the space of test configurations for $(X,[\omega])$. The main result of this section is the following:

\begin{theorem}\label{lowerboundcalabi} We have \[ \inf_{\omega\in [\omega]} \|S(\omega)-\hat S\|_p \geq -\sup_{(\X,\co)\in \mathbb{T}} \frac{\DF(\X,\scA)}{\|(\X,\scA)\|_q}.\] \end{theorem}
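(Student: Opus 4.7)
The plan is to combine the convexity of the Mabuchi functional along $C^{1,\bar{1}}$-geodesics (Berman-Berndtsson) with the asymptotic slope formula (Theorem \ref{Mabuchi-expansion}) and the identification of the $L^q$-norm of a geodesic with the $L^q$-norm of its associated test configuration (Theorem \ref{intro-norms}), turning the asymptotic derivative inequality into a pointwise bound via Hölder.

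Fix $\omega \in [\omega]$ and a test configuration $(\X,\scA)$; we may assume $\DF(\X,\scA) < 0$, else there is nothing to show. First I would reduce to the case $\X$ smooth by passing to a resolution $p : \scY \to \X$, noting that the Donaldson-Futaki invariant is preserved by definition while the $L^q$-norm and associated geodesic quantities pull back unchanged. Associate to $(\X,\scA)$ the unique weak $C^{1,\bar{1}}$-geodesic ray $\{\varphi_s\}$ in $\scH_\omega$ emanating from $\omega$ provided by Proposition \ref{geodesic-regularity}, parametrized by $s = -\log|t|^2$. By Berman-Berndtsson the map $s \mapsto \scM(\varphi_s)$ is continuous and convex, so its right-derivative at $s=0$ is bounded by its asymptotic slope, which by Theorem \ref{Mabuchi-expansion} is at most $\DF(\X,\scA)$:
\[ \frac{d^+}{ds}\bigg|_{s=0}\scM(\varphi_s) \;\leq\; \lim_{s\to\infty} \frac{\scM(\varphi_s)}{s} \;=\; M^{NA}(\X,\scA) \;\leq\; \DF(\X,\scA). \]

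Next I would compute the left-hand side explicitly. Since $\X$ is smooth and the geodesic stays away from the singular fibre $\X_0$ near $s=0$, the path $\varphi_s$ is smooth there and Definition \ref{Mabuchi1} applies, giving
\[ \frac{d}{ds}\bigg|_{s=0}\scM(\varphi_s) \;=\; -\int_X \dot\varphi_0\,(S(\omega) - \hat S)\,\omega^n \;=\; -\int_X (\dot\varphi_0 - \hat\varphi)(S(\omega) - \hat S)\,\omega^n, \]
where we have subtracted the constant $\hat\varphi = \tfrac{1}{V}\int_X \dot\varphi_0\,\omega^n$ using $\int_X (S(\omega)-\hat S)\,\omega^n = 0$. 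Applying Hölder's inequality with conjugate pair $(p,q)$ and combining with the previous display yields
\[ -\|\dot\varphi_0 - \hat\varphi\|_q \cdot \|S(\omega) - \hat S\|_p \;\leq\; \DF(\X,\scA), \]
so that $\|S(\omega)-\hat S\|_p \geq -\DF(\X,\scA)/\|\dot\varphi_0 - \hat\varphi\|_q$.

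Finally I would invoke Theorem \ref{intro-norms}, which identifies the $L^q$-norm of the associated geodesic (independent of $s$ by Lemma \ref{Berndtsson-norm}) with the $L^q$-norm of the test configuration, so $\|\dot\varphi_0 - \hat\varphi\|_q = \|(\X,\scA)\|_q$. Taking the infimum over $\omega \in [\omega]$ and the supremum over $(\X,\scA) \in \mathbb{T}$ then yields the stated bound. The main obstacle is the identification of the geodesic $L^q$-norm with the test-configuration norm, which requires the results of Section \ref{norms-sec} and is the reason the $L^p$ lower bound is genuinely new for arbitrary Hölder exponents even in the projective case; a secondary subtlety is justifying the right-differentiation of $\scM$ along the geodesic at $s=0$, which is handled by the smoothness of the ray near the boundary $|t|=1$.
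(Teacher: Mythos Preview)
Your approach is essentially the paper's: H\"older's inequality plus convexity of the Mabuchi functional along the geodesic, combined with the asymptotic slope formula (Theorem \ref{Mabuchi-expansion}) and the identification of the geodesic $L^q$-norm with the test-configuration norm (Theorem \ref{geodesic-test-config-norm}). There is, however, a genuine gap in your reduction to smooth $\X$. Passing to a resolution $p:\Y\to\X$ does \emph{not} yield a test configuration: the pullback class $p^*\scA$ is only semi-positive on $\Y$ (it degenerates along the exceptional divisor), so $(\Y,p^*\scA)$ fails the relatively K\"ahler condition in the definition, and consequently Theorems \ref{Mabuchi-expansion} and \ref{geodesic-test-config-norm}---both stated under the hypothesis that the total space is smooth with a genuinely relatively K\"ahler class---do not apply. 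The paper repairs this via a perturbation argument (Proposition \ref{perturbation}): one replaces $p^*\scA$ by $p^*\scA - \delta[E]$ with $E$ the exceptional divisor, which \emph{is} relatively K\"ahler for small $\delta>0$, and shows that both $\DF$ and $\|\cdot\|_q$ vary by $O(\delta)$, so the ratio $\DF/\|\cdot\|_q$ for an arbitrary test configuration is approximated arbitrarily well by ratios coming from smooth ones.

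On a smaller point, your claim that ``the path $\varphi_s$ is smooth'' near $s=0$ is not justified by Proposition \ref{geodesic-regularity}, which only gives $C^{1,\bar 1}$ regularity away from $\X_0$; the derivative formula you write down is nonetheless correct, but its justification is the content of \cite[Lemma 3.5]{BB}, which the paper invokes explicitly. On the other hand, your use of the inequality $M^{NA}(\X,\scA) \leq \DF(\X,\scA)$ to bypass a separate reduction to reduced central fibre is a legitimate shortcut relative to the paper, which instead carries out a semi-stable reduction as part of Proposition \ref{perturbation}.
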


We will first obtain an analogous result involving geodesics and their norms using the Mabuchi functional, which is very similar to a result of Berman-Berndtsson \cite[Corollary 1.2]{BB} (and essentially equivalent when $p=q=2$), by using their deep convexity results. For a given test configuration, we will then use its associated geodesic along with a perturbation argument to obtain the above result. 

\begin{proposition} Let $\varphi_s$ be a geodesic. Then for every H\"older conjugate pair $(p,q)$ we have \[ \inf_{\omega\in [\omega]} \|S(\omega)-\hat S\|_p \geq \frac{\lim_{s\to\infty }s^{-1}\scM(\varphi_s)}{\|\varphi_s\|_q}. \] \end{proposition}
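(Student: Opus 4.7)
The proof combines the convexity of $\scM$ along $C^{1,\bar 1}$ geodesics (Berman-Berndtsson), H\"older's inequality, and the time-invariance of the geodesic norm (Lemma \ref{Berndtsson-norm}). First I differentiate $\scM$ along $\varphi_s$ using Definition \ref{Mabuchi1}, obtaining
\[
\frac{d}{ds}\scM(\varphi_s) = -\int_X \dot\varphi_s (S(\omega_s) - \hat S)\,\omega_s^n.
\]
Since $\int_X (S(\omega_s) - \hat S)\omega_s^n = 0$, the integrand is unchanged after subtracting from $\dot\varphi_s$ its mean $\hat\varphi$, which is $s$-independent by Lemma \ref{Berndtsson-norm}. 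H\"older with conjugate pair $(p,q)$ then yields
\[
\left|\frac{d}{ds}\scM(\varphi_s)\right| \le \|\dot\varphi_s - \hat\varphi\|_{L^q(\omega_s^n)}\cdot \|S(\omega_s) - \hat S\|_p = \|\varphi_s\|_q\cdot \|S(\omega_s) - \hat S\|_p,
\]
where the last equality uses Lemma \ref{Berndtsson-norm} to identify the $L^q$-norm of the mean-corrected velocity with the geodesic norm.

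Next I invoke the Berman-Berndtsson convexity theorem, which states that $s \mapsto \scM(\varphi_s)$ is convex on $[0,\infty)$. Consequently the right-derivative $\scM'(\varphi_s^+)$ is non-decreasing in $s$, has limit $L := \lim_{s\to\infty}s^{-1}\scM(\varphi_s)$, and satisfies $\scM'(\varphi_s^+) \le L$ for every $s \ge 0$. Applied at $s = 0$ and combined with the H\"older bound, this produces the desired inequality at the base metric $\omega$ of the geodesic.

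To upgrade the bound at a single base metric to the infimum over all $\omega' \in [\omega]$, I would rebase the geodesic ray: for each $\omega'\in[\omega]$, one constructs a geodesic ray emanating from $\omega'$ with the same asymptotic slope $L$ and the same geodesic norm $\|\varphi_s\|_q$. In the test configuration setting this is achieved by choosing a different K\"ahler reference form on the total space with boundary value $\omega'$ at $t=1$. Both invariants are intrinsic to the underlying family of K\"ahler metrics: the norm invariance is Lemma \ref{Berndtsson-norm}, while the invariance of the asymptotic slope follows because a change of K\"ahler reference only shifts $\scM(\varphi_s)$ by an $O(1)$ quantity, visible for instance from the Chen-Tian decomposition (Proposition \ref{chen-tian-formula}). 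Repeating the convexity/H\"older step at the new base $\omega'$ yields the analogous inequality, and taking the infimum completes the argument.

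The central technical point is this final rebasing step, namely verifying that $L$ and $\|\varphi_s\|_q$ genuinely depend only on the family $\{\omega_s\}$ and not on the choice of reference potential. For geodesics associated to test configurations -- the only setting required for the subsequent derivation of Theorem \ref{lowerboundcalabi} via the perturbation argument -- both quantities are intrinsic to the K\"ahler class $\scA$ by construction, and no further work is needed.
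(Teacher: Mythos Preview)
Your proposal is correct and follows essentially the same route as the paper: apply H\"older's inequality at $s=0$, invoke Berman--Berndtsson convexity to bound the initial (one-sided) derivative of $\scM$ by the asymptotic slope, and identify the $L^q$-norm of $\dot\varphi_0-\hat\varphi$ with $\|\varphi_s\|_q$ via Lemma~\ref{Berndtsson-norm}. Your treatment of the ``rebasing'' step is in fact more explicit than the paper's, which simply writes ``As this is true for each $\omega$, we obtain the result''; your observation that in the only case needed (geodesics coming from test configurations) both the slope and the norm are intrinsic to $\scA$ is exactly the intended meaning.

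One small caveat worth tightening: the formula $\frac{d}{ds}\scM(\varphi_s)=-\int_X\dot\varphi_s(S(\omega_s)-\hat S)\,\omega_s^n$ is only meaningful where $\omega_s$ is smooth, so it should not be asserted for general $s$ along a $C^{1,\bar 1}$ geodesic. You only use it at $s=0$, where $\omega_0=\omega$ is smooth; but even there, differentiability of $\scM$ along the geodesic and the identification of the one-sided derivative with the formal expression is a nontrivial consequence of the Berman--Berndtsson analysis. The paper makes this dependence explicit by citing \cite[Lemma 3.5 and proof of Corollary 1.2]{BB} at precisely this point, and you should do likewise rather than appealing to Definition~\ref{Mabuchi1} alone.
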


\begin{proof} 

Since the geodesic $\varphi_t$ is $C^1$, its derivative is continuous. H\"older's inequality gives \begin{equation}\label{holder}\left(\|S(\omega)-\hat S\|_p\right)\left(\|\dot\varphi_0-\hat\varphi\|_q\right) \geq \int_X (\dot\varphi_0-\hat\varphi)(S(\omega)-\hat S)\omega^n.\end{equation} Note that, as used in \cite[Equation (4.17)]{TH}, $$\int_X (\dot\varphi_0-\hat\varphi)(S(\omega)-\hat S)\omega^n = \int_X \dot\varphi_0(S(\omega)-\hat S)\omega^n.$$ Remark that $\|\varphi_0\|_q = \|\varphi_s\|_q$ for all $s$ by Lemma \ref{Berndtsson-norm}.  

The Mabuchi functional is continuous and convex along the geodesic $\varphi_s$ \cite{BB}. Then by elementary properties of convex functions and \cite[Lemma 3.5 and proof of Corollary 1.2]{BB} we have  \begin{align*}\left(\|S(\omega)-\hat S\|_p\right)\left(\|\dot\varphi_0-\hat\varphi\|_q\right) &\geq \int_X \dot\varphi_0(S(\omega)-\hat S)\omega^n, \\ & \geq  - \lim_{s\to\infty}\frac{\scM(\varphi_s)}{s}. \end{align*} As this is true for each $\omega$, we obtain the result.

\end{proof}

The use of H\"older's inequality in the above proof is essentially motivated from the fact that the term $S(\omega)-\hat S$ arises as a moment map, and seems to have first been used by Donaldson \cite{SD}. The above proof should be thought of as an analogue of part of the Kempf-Ness Theorem, which gives lower bounds on the norm squared of moment maps, using the Cauchy-Schwarz inequality. Compare, for example, \cite{SD,XC3,TH} and the survey \cite{GRS}.  

The next step is the following result relating the various norms.

\begin{theorem}\label{geodesic-test-config-norm}  The $L^p$-norm of $(\X,\scA)$ is independent of choice of $\Omega\in\scA$. If $\X$ is smooth, then the $L^p$-norm of a test configuration equals the $L^p$-norm of the associated geodesic. \end{theorem}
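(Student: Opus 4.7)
The plan is to use the $\C^*$-action to change variables, rewriting the central-fibre integral defining $\|(\X,\scA)\|_p$ as an integral over a nearby smooth fibre, and then to invoke Lemma \ref{Berndtsson-norm} to identify the result with the $L^p$-norm of the associated geodesic.

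For the second claim (equality with the geodesic norm when $\X$ is smooth), I would take $\Omega \in \scA$ to be the canonical geodesic $\Omega^{\text{geo}}$ from Proposition \ref{geodesic-regularity}, approximated by smooth relatively K\"ahler representatives to circumvent the $C^{1,\bar 1}$ regularity. Writing $\omega_t = \alpha(t)^*\Omega_t$ and letting $\varphi_t$ be as in Definition \ref{hamiltonian-def}, one has $\dot\varphi_t|_X = \alpha(t)^* h_\alpha$. Pulling back through the biholomorphism $\alpha(t):X\to\X_t$ yields, for every constant $c$, the change-of-variables identity
\[\int_X |\dot\varphi_t - c|^p \omega_t^n \;=\; \int_{\X_t} |h_\alpha - c|^p \Omega_t^n.\]
Applying the same change of variables to the function $\dot\varphi_t$ itself (rather than its $p$-th absolute power) gives $V \hat\varphi_t = \int_{\X_t} h_\alpha \Omega_t^n$, which by the remark following Lemma \ref{Berndtsson-norm} is independent of $t$. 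Letting $t \to 0$ identifies this common value as $V\hat h_\alpha$, so $\hat\varphi = \hat h_\alpha$. Setting $c = \hat\varphi$ in the displayed identity, Lemma \ref{Berndtsson-norm} shows the left-hand side is $t$-independent and equal to $\|\varphi_t\|_p^p$, while the right-hand side at $t \to 0$ equals $\|(\X,\scA)\|_p^p$, giving the desired equality.

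For independence of $\Omega \in \scA$, I would proceed variationally along a convex path $\Omega_s = \Omega + s\,i\ddbar\phi$ with $\phi$ $S^1$-invariant (obtained by averaging the $\partial\bar\partial$-potential over the circle action). The derivative of the norm in $s$ splits into contributions from the change in the hamiltonian $h_\alpha^s$ and the change in the measure $\Omega_0^n$; integration by parts on $\X_0$ (after passing to a resolution if $\X$ is singular) together with the hamiltonian equation for $\alpha$ cancels these contributions. In the smooth-$\X$ case, independence is automatic from the identification with the intrinsic geodesic norm established above.

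The main obstacle is the passage $t \to 0$: one must control the convergence $\Omega_t^n \to \Omega_0^n$ in a topology strong enough to handle the integrand $|h_\alpha - \hat h_\alpha|^p$, which is delicate because $\Omega^{\text{geo}}$ is only $C^{1,\bar 1}$ and the central fibre $\X_0$ is typically singular and may be non-reduced. A cleaner route, following Hisamoto's approach in the projective case, is to show that the Duistermaat–Heckman pushforward $(h_\alpha)_*\Omega_0^n$ on $\R$ is intrinsic to $(\X,\scA)$, thereby reducing both claims to a statement about a fixed measure on the real line.
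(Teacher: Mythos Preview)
Your skeleton for the second claim --- change of variables via $\alpha(t)$, then Lemma \ref{Berndtsson-norm}, then $t\to 0$ --- matches the paper's final paragraph, and your variational argument for independence of $\Omega$ is essentially the lemma the paper takes as its \emph{starting point}. The gap is that you have not noticed these two pieces fit together to dissolve the obstacle you correctly identify.

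Concretely: approximating the geodesic $\eta$ by smooth relatively K\"ahler $\Omega_\epsilon$ does not help, because Lemma \ref{Berndtsson-norm} only applies to the genuine geodesic; for smooth $\Omega_\epsilon$ the fibre integrals $\int_{\X_t}|h_{\Omega_\epsilon}-\hat h|^p(\Omega_\epsilon)_t^n$ are not constant in $t$, so your displayed identity gives you nothing to pass to the limit with. You are therefore forced to take $t\to 0$ directly with the $C^{1,\bar 1}$ metric $\eta$, and you have no mechanism for this beyond the vague appeal to Duistermaat--Heckman (which the paper explicitly says does not extend easily to the K\"ahler setting).

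The paper's resolution is to promote your variational computation to a lemma: for a compact K\"ahler manifold with a hamiltonian $S^1$-action and any $C^1$ function $f$, the integral $\int_X f(h_\omega)\omega^n$ depends only on the class $[\omega]$, not on the representative. This is extended by approximation to semi-positive classes and to $C^{1,\bar 1}$ representatives. The key move is then to apply this invariance \emph{on the central fibre itself}. One first passes to a log resolution $p:\Y\to\X$ so that $\Y_0=\sum a_i\Y_{0,i}$ is simple normal crossings --- note this is needed even when $\X$ is already smooth, so your ``passing to a resolution if $\X$ is singular'' is not quite the right condition --- and then applies the invariance on each smooth component $\Y_{0,i}$ in the restricted class $(p^*\scA)|_{\Y_{0,i}}$. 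This yields both independence of the norm from the smooth choice $\Omega\in\scA$ and, crucially, the identity
\[
\int_{\X_0}|h_\Omega-\hat h|^p\Omega_0^n \;=\; \int_{\X_0}|h_\eta-\hat h|^p\eta_0^n
\]
between the smooth representative and the geodesic metric on $\X_0$. This last equality is exactly what lets you sidestep the delicate limit: once you know the central-fibre integral is the same for $\eta$ and for a smooth $\Omega$, the passage $t\to 0$ can be carried out with the smooth $\Omega$ and then matched to the geodesic side via Lemma \ref{Berndtsson-norm}, just as you outlined.
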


\begin{proof} The starting point of the proof is the following result, which the author learned from G. Sz\'ekelyhidi: let $(X,[\omega])$ be a compact K\"ahler manifold with a hamiltonian $S^1$-action. Setting $h_{\omega}$ to be the hamiltonian for any $\omega \in [\omega]$, then for any $C^1$-function $f$ on $X$, the integral $\int_X f(h_{\omega})\omega^n$ is independent of $\omega\in [\omega]$. The proof follows by a simple calculation (fixing any two K\"ahler metrics and differentiating the integral along the line of K\"ahler metrics between them), and generalises for example the classical invariance of the Futaki invariant and $L^2$-norm of Futaki-Mabuchi \cite{FM}.

It follows that $\int_X |h_{\omega} - \hat h_{\omega}|^p\omega^n$ is independent of $\omega\in [\omega]$ for $p$ an even integer, and also for general $p$ by approximating the $p$-norm by a sequence of $C^1$-functions $f$. Moreover, if $\omega$ and $[\omega]$ are just \emph{semi}-positive, but $h_{\alpha}$ satisfies the hamiltonian equation in the sense of Definition \ref{hamiltonian-def}, it follows from continuity of the integral that the integral is still independent of $\omega$ by approximating $\omega$ and $[\omega]$ by K\"ahler classes (where by above the integral is independent of the approximation once the class is fixed). The results above hold for weak $C^{1,\bar{1}}$-K\"ahler metrics (or the semi-positive analogue) by another approximation argument (with our usual definition of the hamiltonian), this time approximating the weak (semi)-K\"ahler metric by smooth (semi)-positive forms in the same class. %

Assuming $\X$ is smooth, Proposition \ref{geodesic-regularity} implies that the weak K\"ahler metric $\eta$ induced by the geodesic is globally $C^{1,\bar{1}}$, in the sense that the potentials are $C^{1,\bar{1}}$ with respect to some smooth reference metric. Thus the hamiltonian $h_{\eta}$ is a continuous function. Pick some smooth $\co\in[\co]$ with hamiltonian $h_{\Omega}$, with which we will calculate the $L^p$-norm of $(\X,\scA)$. We claim that \begin{equation}\label{pf:hamiltoniansequal}\int_{\X_0}|h_{\co} - \bar h|^p \co_0^n = \int_{\X_0}|h_{\eta} - \bar h|^p \eta_0^n.\end{equation}Indeed, this follows by passing to a resolution of singularities $p: \Y\to\X$ such that $\Y_0$ is a simple normal crossings divisor, which simply means that as a cycle $\Y_0 = \sum_i a_i \Y_{0,i}$ with $\Y_{0,i}$ smooth (compact) K\"ahler manifolds and $a_i\in\N$  (recall that log resolution of singularities holds in the K\"ahler category by \cite[Lemma 2.2]{CMM}). Thus equation (\ref{pf:hamiltoniansequal}) follows from the discussion above applied to each $\Y_{0,i}$ with respect to the class $(p^*[\co])|_{\Y_{0,i}}$. This argument also shows that the $L^p$-norm is independent of choice of smooth $\Omega\in\scA$, even when $\X$ is singular. 

We now use the invariance of the norm along the geodesic, namely Lemma \ref{Berndtsson-norm}: the value $$\int_{X = \X_1} |\dot\varphi_s - \hat\varphi_s|^p\omega_s^n = \int_{\X_t} |h_{\eta} - \hat h|^p\eta_t^n$$ is independent of $s$, hence equals the corresponding integral over $\X_0$, which in turn equals the norm of the test configuration by equation (\ref{pf:hamiltoniansequal}).

\end{proof} 

\begin{remark}There is another, slightly less appealing, proof of the above result using the fact that, for some smooth $\Omega\in\scA$, we have $[\Omega]=[\eta]$. As $\X$ is smooth this implies that the corresponding families of potentials $\varphi_s$ and $\psi$ are uniformly bounded in $C^1$ as $s\to\infty$. This can then be used to show directly that $$\lim_{s\to\infty} \int_X |\dot \varphi_s - \hat \varphi|^p\eta_s^n = \lim_{s\to\infty} \int_X |\dot \psi_s - \hat \psi|^p\Omega_s^n,$$ which is all that is required.  \end{remark}

Theorem \ref{geodesic-test-config-norm} is due to Hisamoto in the projective case, who uses very different techniques \cite[Theorem 1.2]{TH} which do not straightforwardly extend to the K\"ahler setting. Essentially the same argument gives the following: 

\begin{corollary} The inner product of a test configuration $(\X,\scA)$ with a vector field is independent of $\eta\in\scA$.\end{corollary}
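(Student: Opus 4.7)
The plan is to deduce the corollary from the invariance of the $L^2$-norm obtained (in the course of the proof of) Theorem \ref{geodesic-test-config-norm}, via a polarisation identity. Fix $\co \in \scA$, denote by $\alpha$ the $\C^*$-action of the test configuration, and let $\beta \in \Lie(T)$ be the given vertical vector field. Because the $\ddbar$-equation of Definition \ref{hamiltonian-def} is linear and $\alpha, \beta$ commute, for the elements $\gamma := \alpha + \beta$ and $\delta := \alpha - \beta$ of $\Lie(T)$ one can take $h_\gamma = h_\alpha + h_\beta$ and $h_\delta = h_\alpha - h_\beta$ as hamiltonians with respect to $\co$, with the corresponding additivity for their $\X_0$-averages $\hat h_\cdot$.

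Writing $a := h_\alpha - \hat h_\alpha$ and $b := h_\beta - \hat h_\beta$, one then has $h_\gamma - \hat h_\gamma = a+b$ and $h_\delta - \hat h_\delta = a-b$, so the pointwise polarisation identity $4ab = (a+b)^2 - (a-b)^2$ integrates to
\[ 4\langle \alpha, \beta \rangle = \langle \gamma, \gamma \rangle - \langle \delta, \delta \rangle. \]
The right-hand side is a difference of squared $L^2$-norms of the test configuration, now computed with respect to the elements $\gamma, \delta \in \Lie(T)$. Each term is independent of $\co \in \scA$ by the argument of Theorem \ref{geodesic-test-config-norm}: the Sz\'ekelyhidi-style invariance $\int_Y f(h_\omega)\omega^n = \mathrm{const}$ on a smooth component $Y$, combined with log resolution of $\X_0$, applies verbatim with $\gamma$ and $\delta$ in place of $\alpha$. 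Consequently $\langle \alpha, \beta \rangle$ is independent of $\co \in \scA$ as well.

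The only step that needs slight care beyond what is written in Theorem \ref{geodesic-test-config-norm} is that the Sz\'ekelyhidi invariance goes through when the hamiltonian comes from a general real one-parameter subgroup of $T$ rather than strictly from a $\C^*$-action, since $\gamma$ and $\delta$ are real linear combinations and need not be rational. This is the main (and essentially only) point to verify: the underlying differentiation-and-integration-by-parts argument only uses the identity $dh = \iota_X \co$ for the real generating vector field $X$, which is available for any element of $\Lie(T)$ regardless of whether the flow is holomorphic. Combining this with the polarisation identity above yields the corollary.
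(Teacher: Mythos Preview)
Your proof is correct. The paper itself gives no details beyond the sentence ``Essentially the same argument gives the following'', so any reasonable implementation of that sentence counts as matching the paper's approach. The most direct reading of the paper is probably a two-variable version of the Sz\'ekelyhidi invariance, i.e.\ that $\int_{Y} f(h_{\alpha},h_{\beta})\,\omega^n$ is independent of $\omega\in[\omega]$ for $C^1$ functions $f$ of two variables, applied with $f(x,y)=xy$ on each smooth component of a log resolution of $\X_0$ (the averages $\hat h_\alpha,\hat h_\beta$ being constants by the one-variable case). Your polarisation identity instead reduces everything to the one-variable statement already proved, at the cost of checking that the invariance applies to the possibly irrational elements $\gamma=\alpha+\beta$ and $\delta=\alpha-\beta$; you correctly identify this as the only extra point and your justification via $dh=\iota_X\co$ is exactly right. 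The two routes are equivalent in content and difficulty, and your version has the minor advantage of not requiring one to rerun the differentiation argument in two variables.
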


Relating this to algebraic geometry, we have have proven:

\begin{corollary} Denote by $\mathbb{T}_{sm}$ the set of test configurations for $(X,[\omega])$ with $\X$ smooth and with reduced central fibre. We have $$\inf_{\omega\in [\omega]} \|S(\omega)-\hat S\|_p \geq -\sup_{(\X,\co)\in \mathbb{T}_{sm}} \frac{\DF(\X,\scA)}{\|(\X,\scA)\|_q}.$$\end{corollary}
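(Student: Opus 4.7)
The plan is to obtain the bound for each individual $(\X,\scA) \in \mathbb{T}_{sm}$ and then take a supremum, by assembling three already-established inputs: the preceding Proposition bounding the Calabi functional along a single geodesic via the asymptotic slope of the Mabuchi functional, the Mabuchi-functional expansion of Theorem~\ref{Mabuchi-expansion}, and the norm identification of Theorem~\ref{geodesic-test-config-norm}.

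Concretely, I fix $(\X,\scA) \in \mathbb{T}_{sm}$, so that $\X$ is smooth and $\X_0$ reduced. By Proposition~\ref{geodesic-regularity}, this test configuration produces a canonical $C^{1,\bar 1}$ weak geodesic ray $\{\varphi_s\}$ in the space of K\"ahler potentials based at $\omega$. Feeding this geodesic into the preceding Proposition yields
\[
\inf_{\omega \in [\omega]} \|S(\omega) - \hat S\|_p \;\geq\; -\,\frac{\lim_{s\to\infty} s^{-1}\scM(\varphi_s)}{\|\varphi_s\|_q}.
\]
The numerator is now identified with the Donaldson-Futaki invariant: smoothness of $\X$ lets me invoke Theorem~\ref{Mabuchi-expansion} to equate the limit with $M^{NA}(\X,\scA)$, and reducedness of $\X_0$ upgrades the general inequality $M^{NA}(\X,\scA) \leq \DF(\X,\scA)$ from that same theorem to an equality. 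The denominator is handled by Theorem~\ref{geodesic-test-config-norm}, which, again using smoothness of $\X$, identifies the geodesic norm $\|\varphi_s\|_q$ (well-defined and independent of $s$ by Lemma~\ref{Berndtsson-norm}) with the test-configuration norm $\|(\X,\scA)\|_q$.

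Substituting both identifications back gives, for each $(\X,\scA) \in \mathbb{T}_{sm}$,
\[
\inf_{\omega\in[\omega]}\|S(\omega)-\hat S\|_p \;\geq\; -\,\frac{\DF(\X,\scA)}{\|(\X,\scA)\|_q},
\]
and taking the supremum of the right-hand side over $\mathbb{T}_{sm}$ completes the proof. There is no substantive obstacle here: all the analytic work has already been done in establishing the three inputs — the Berman-Berndtsson convexity together with H\"older's inequality behind the geodesic lower bound, the delicate asymptotic expansion of $\scM$ along geodesics, and the hamiltonian-integral invariance underpinning Theorem~\ref{geodesic-test-config-norm}. The two conditions cutting out $\mathbb{T}_{sm}$ are precisely the hypotheses needed to apply these inputs without loss, so the corollary follows by pure assembly.
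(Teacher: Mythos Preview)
Your proof is correct and follows exactly the paper's approach: combine the preceding geodesic lower bound with Theorem~\ref{Mabuchi-expansion} (where smoothness and reducedness give $M^{NA}=\DF$) and Theorem~\ref{geodesic-test-config-norm} (where smoothness identifies the geodesic and test-configuration norms). The paper's own proof is a one-line invocation of these same ingredients; you have simply spelled out the assembly.
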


\begin{proof} This follows immediately by the above, using Theorem \ref{Mabuchi-expansion}. Indeed, under the hypotheses the Donaldson-Futaki invariant equals the non-Archimedean Mabuchi functional. \end{proof}

The difference between this and the general form of the lower bound on the Calabi functional is the requirement that the total space be smooth. However, as we are only proving an inequality we are able to perturb to this case as follows.

\begin{proposition}\label{perturbation} Given an arbitrary test configuration with $(\X,\scA)$ and $\epsilon>0$, there exists a smooth test configuration $(\Y,\scB)$ with $$ \left|\frac{\DF(\X,\scA)}{\|(\X,\scA)\|_q} -  \frac{\DF(\Y,\scB)}{\|(\Y,\scB)\|_q}\right| < \epsilon.$$ Moreover, given a smooth test configuration $(\X,\scA)$ with non-reduced central fibre, there exists a test configuration $(\Y,\scB)$ with reduced central fibre and an $\epsilon\geq 0$ satisfying $$\frac{\DF(\Y,\scB)}{\|(\Y,\scB)\|_q}- \epsilon\leq \frac{\DF(\X,\scA)}{\|(\X,\scA)\|_q}.$$ \end{proposition}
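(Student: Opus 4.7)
The proposition splits into two independent claims, which I would attack by separate constructions: resolution of singularities combined with a K\"ahler class perturbation for the first, and normalized base change for the second.

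For the first claim, the plan is to take an equivariant log resolution $p\colon \Y \to \X$ (existence via \cite[Lemma 2.2]{CMM} and standard equivariance arguments) and then perturb. Because $X$ is smooth, $\X$ is smooth away from $\X_0$, so the exceptional locus of $p$ is contained in $\Y_0$. Using an analytic analogue of the negativity lemma / relative K\"ahler cone, choose an effective $p$-exceptional $\R$-divisor $E$ supported on $\Y_0$ such that $\scB_\delta := p^*\scA - \delta [E]$ is K\"ahler on $\Y$ for all small $\delta > 0$. Since $E$ is supported on $\Y_0$, the class $\scB_\delta$ restricts to $[\omega]$ on $\Y_1 \cong X$, so $(\Y, \scB_\delta)$ is a smooth K\"ahler test configuration for $(X, [\omega])$. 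The Donaldson-Futaki invariant is a polynomial intersection number in $\delta$ on the smooth total space, hence continuous, with $\DF(\Y, \scB_\delta) \to \DF(\Y, p^*\scA) = \DF(\X, \scA)$ as $\delta \to 0$ by the very definition of $\DF$ in the singular case. For the $L^q$-norm, Theorem \ref{geodesic-test-config-norm} gives that the norm depends only on the class; choosing smooth representatives $\Omega_\delta \in \scB_\delta$ converging smoothly to a representative of $p^*\scA$, the hamiltonians defined via Definition \ref{hamiltonian-def} converge uniformly, and the defining integrals converge to $\|(\X, \scA)\|_q^q$ by birational invariance of top-form integrals (pushforward along $p$). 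Taking $\delta$ sufficiently small yields the first claim.

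For the second claim, the plan is base change and normalization. Let $m$ be the lcm of the multiplicities of the irreducible components of $\X_0$, and let $\Y$ be the normalization of the base change $\X \times_\C \C$ along $t \mapsto t^m$, equipped with the pullback class $\scB$ of $\scA$. Standard arguments give a test configuration with reduced central fibre. Two scalings are central. First, the $\C^*$-action on $\Y_0$, seen through the normalization, is $m$ times the original action on $\X_0$, so the hamiltonian on $\Y_0$ is $m$ times the pullback hamiltonian, yielding $\|(\Y, \scB)\|_q = m \cdot \|(\X, \scA)\|_q$. Second, the intersection-theoretic definition of $M^{NA}$ scales analogously, giving $M^{NA}(\Y, \scB) = m \cdot M^{NA}(\X, \scA)$. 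Combined with $\DF(\Y, \scB) = M^{NA}(\Y, \scB)$ (by reduced central fibre, via Theorem \ref{Mabuchi-expansion}) and the general inequality $M^{NA}(\X, \scA) \leq \DF(\X, \scA)$, we obtain
\[
\frac{\DF(\Y,\scB)}{\|(\Y,\scB)\|_q} = \frac{m \cdot M^{NA}(\X, \scA)}{m \cdot \|(\X,\scA)\|_q} \leq \frac{\DF(\X, \scA)}{\|(\X,\scA)\|_q},
\]
establishing the claim with $\epsilon = 0$.

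The main obstacles I expect are as follows. In the first part, constructing the central-fibre-supported exceptional perturbation $E$ for which $p^*\scA - \delta [E]$ remains K\"ahler is the technical heart; this is an analytic analogue of the negativity lemma for birational contractions and requires care in the K\"ahler category, particularly to guarantee that $E$ can be chosen to lie entirely over $\X_0$. In the second part, the delicate ingredient is the scaling $M^{NA}(\Y, \scB) = m \cdot M^{NA}(\X, \scA)$, whose verification requires an intersection-theoretic computation tracking the behaviour of the relative canonical class under normalized base change and the associated ramification contribution.
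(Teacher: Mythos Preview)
Your overall strategy matches the paper's: resolution with an exceptional perturbation for the first claim, and normalised base change for the second. A few comments.

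\textbf{First claim.} This is essentially the paper's argument. Your anticipated obstacle --- arranging the exceptional divisor $E$ to be supported over $\X_0$ --- is not in fact an obstacle: since $\X \setminus \X_0 \cong X \times \C^*$ is already smooth, any resolution is an isomorphism away from the central fibre and its exceptional locus is automatically contained in $\Y_0$. The paper simply takes $E$ to be the full exceptional divisor and writes the hamiltonian as $h_{\Y,\delta} = f^*h_{\X} + \delta h_E$, from which continuity of both $\DF$ and the norm in $\delta$ is immediate.

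\textbf{Second claim.} Here your argument has a genuine gap. You assert that the normalised base change $(\Y,\scB)$ with $\scB = f^*\scA$ is itself a test configuration and conclude with $\epsilon = 0$. But the finite map $f \colon \Y \to \X$ is ramified over the central fibre, so the pulled-back class $f^*\scA$ need not be K\"ahler on $\Y_0$; thus $(\Y, f^*\scA)$ may fail the definition of a test configuration. Moreover, you invoke Theorem~\ref{Mabuchi-expansion} to get $\DF(\Y,\scB) = M^{NA}(\Y,\scB)$, but that theorem is stated for \emph{smooth} total spaces, and the normalised base change is typically singular. The paper deals with both issues at once: after the base change it takes a further resolution of $\Y$ and perturbs $f^*\scA$ to a genuinely relatively K\"ahler class, which is exactly what introduces the $\epsilon$ in the statement. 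Your scaling identities $\|(\Y,\scB)\|_q = m\,\|(\X,\scA)\|_q$ and $\DF(\Y,f^*\scA) \le m\,\DF(\X,\scA)$ are correct (the paper proves the first directly via the reparametrised potentials $\psi(mt)$, and cites the second from \cite{BHJ1,ZSD,DR}; your $M^{NA}$ route is precisely what underlies those citations), but they apply to the unperturbed pair, and the final perturbation step cannot be skipped.
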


\begin{proof} We first consider the smoothness statement. The proof constructs an explicit pair $(\Y,\scB)$ for each $\epsilon>0$. We let $f: \Y\to\X$ be a resolution of singularities, and set $\scB = f^*\scA - \delta [E]$ where $E$ is the exceptional divisor of the resolution. It is convenient to take a smooth $S^1$-invariant representatives $\co\in \scA$ and $\nu$ of $[E]$.

By \cite{DR}, we have $\DF(\Y, f^*[\co] - \delta [\nu]) = \DF(\X, [\co]) + O(\delta)$. A similar conclusion for the norms will be enough to conclude the smoothness result. 

Denote the hamiltonians on $\X$ and $\Y$ respectively by $h_{\X}, h_{\Y,\delta}$. Then one sees by definition of the hamiltonians that $\hat h_{\Y,\delta}  = f^*h_{\X} + \delta h_E$ for some smooth function $h_E$. The smoothness result follows, since the $L^q$-norm of $(\Y, f^*[\co] - \delta [\nu])$ is defined as $$\|(\Y,\scA-\delta [E])\|^q_q = \int_{\Y_0}| h_{\Y,\delta} - \hat h_{\Y,\delta}|^q(f^*\co_0 - \delta \nu_0)^n.$$

Thus it suffices to show that one can also assume the central fibre is reduced. Take $f: \Y\to\X$ to be a semi-stable reduction of $\X$. Thus, $\Y\to\X$ is the normalisation of the base change induced from a finite cover of $\C\to\C$ of the form $t\to t^d$ where $t$ is the co-ordinate on $\C$. For $d$ sufficiently divisible, $\Y_0$ will be reduced \cite[proof of Proposition 7.15]{BHJ1}. Pick a smooth form $\co\in\scA$. Then at the level of potentials, setting $\alpha(t)^*\co-\co = i\ddbar\psi(t)$, this base change reparametrises $\psi(t)$ to $\psi(dt)$ \cite[p1013]{RB}. By definition of the hamiltonian, and using the obvious notation, this means $h_{\Y} = dh_{\X}$. We therefore have $$(d\|(\X,\scA)\|_p)^p = \int_{\X_0}|dh_{\X} - d\hat h_{\X}|^p = \int_{\Y_0}|h_{\Y} - f^*\hat h_{\Y}|^p.$$ By taking a resolution of singularities of $\scY$ and perturbing the class $f^*\scA$ to a relatively K\"ahler class on the resolution, we obtain a test configuration with reduced central fibre and the same norm up to a term of order $\epsilon$. Similarly we have $\DF(\Y,f^*\scA) \leq d\DF(\X,\scA)$ \cite[Proposition 7.14]{BHJ1}\cite[Proposition 3.15]{ZSD}\cite[Remark 2.21]{DR}, with equality if and only if $\X_0$ is already reduced. The result follows.

 \end{proof}
 
This gives a new proof of a result of Arezzo-Della Vedova-La Nave \cite[Theorem 1.5]{ADV}, who proved Proposition \ref{perturbation} by a delicate analysis of weight polynomials when $X$ is projective and $p=2$.

Theorem \ref{lowerboundcalabi} is an immediate corollary.

\subsection{Norms}\label{norms-sec}

We are now in a position to prove Theorem \ref{intro-norms}. 

\begin{theorem}\label{norms}Let $(\scX,\scA)$ be a test configuration. The following are equivalent.
\begin{itemize}
\item[(i)] The $L^p$-norm $\|(\scX,\scA)\|_p$ vanishes for all $p$.
\item[(ii)] The $L^p$-norm $\|(\scX,\scA)\|_p$ vanishes for some $p$.
\item[(iii)] The minimum norm $\|(\scX,\scA)\|_m$ vanishes. \end{itemize} Suppose that $\X$ is smooth. Then these are equivalent to: \begin{itemize} \item[(iv)] The geodesic associated to $(\scX,\scA)$ is trivial. \end{itemize}
\end{theorem}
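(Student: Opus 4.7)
The strategy is to reduce each of (i), (ii), (iii) to the single concrete condition that the hamiltonian $h_\alpha$ is constant (hence equal to its average $\hat h_\alpha$) on the smooth locus of $\X_0$; the equivalence with (iv) in the smooth case then falls out of Theorem \ref{geodesic-test-config-norm} and Lemma \ref{Berndtsson-norm}.

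The implication (i)$\Rightarrow$(ii) is trivial. For (ii)$\Rightarrow$(i), suppose $\|(\X,\scA)\|_p=0$ for some $p$. By Definition \ref{hamiltonian-def} the hamiltonian $h_\alpha=\alpha(t)_*\dot\varphi(t)$ is a smooth function on the smooth locus $\X^{\reg}$ of $\X$ (where the $\C^*$-action preserves the tangent dimension), and the restriction to $\X_0 \cap \X^{\reg}$ is continuous. Since $\co_0$ is a smooth form representing a K\"ahler class on each fibre, in particular on the smooth locus of $\X_0$, and the singular locus of $\X_0$ has $\co_0^n$-measure zero by normality of $\X$, the identity $\int_{\X_0}|h_\alpha-\hat h_\alpha|^p \co_0^n=0$ forces $h_\alpha \equiv \hat h_\alpha$ on $\X_0\cap \X^{\reg}$. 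This immediately yields $\|(\X,\scA)\|_q=0$ for all $q$, giving (i).

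For (i)$\Leftrightarrow$(iii), the point is that the minimum norm $\|(\X,\scA)\|_m$, extending the non-Archimedean $J$-functional of \cite{BHJ1} to the K\"ahler setting, admits a hamiltonian description of the form $\|(\X,\scA)\|_m = \hat h_\alpha - \inf_{\X_0} h_\alpha$ (equivalently, the symmetric $\sup-\inf$ version); this description follows from the same approximation by smooth forms in a fixed K\"ahler class that was used in the proof of Theorem \ref{geodesic-test-config-norm}, together with the intersection-theoretic interpretation of the minimum norm. Vanishing of this quantity combined with $h_\alpha$ having average $\hat h_\alpha$ again forces $h_\alpha\equiv \hat h_\alpha$ on the smooth locus, which is equivalent to the vanishing of every $L^p$-norm by the previous paragraph.

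Finally, assume $\X$ is smooth. Theorem \ref{geodesic-test-config-norm} identifies the $L^p$-norm of $(\X,\scA)$ with the $L^p$-norm $\|\varphi_t\|_p$ of the associated $C^{1,\bar 1}$ geodesic. Lemma \ref{Berndtsson-norm} then states precisely that $\|\varphi_t\|_p=0$ for some (equivalently every) $p$ is equivalent to $\varphi_t=t\varphi+\varphi_0$ with $\varphi,\varphi_0$ smooth, i.e.\ to the geodesic being trivial. This gives (ii)$\Leftrightarrow$(iv) and completes the chain of equivalences.

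The main obstacle is the (iii) step, since the minimum norm was introduced via intersection theory in the projective setting and must be reformulated hamiltonian-theoretically in the K\"ahler setting before the argument above applies; once that reformulation is in hand, everything else is a straightforward consequence of the measure-theoretic vanishing argument plus the previously established geodesic/test configuration norm identification.
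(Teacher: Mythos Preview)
Your treatment of (i)$\Leftrightarrow$(ii) and (ii)$\Leftrightarrow$(iv) is essentially the paper's: the first is the trivial measure-theoretic observation, and the second is exactly Theorem \ref{geodesic-test-config-norm} combined with Lemma \ref{Berndtsson-norm}.

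The gap is in your (i)$\Leftrightarrow$(iii) step. You assert a hamiltonian formula $\|(\X,\scA)\|_m = \hat h_\alpha - \inf_{\X_0} h_\alpha$ (or $\sup - \hat h$), and you justify it by invoking ``the same approximation by smooth forms \dots\ used in the proof of Theorem \ref{geodesic-test-config-norm}''. But that argument shows invariance of integrals $\int_{\X_0} f(h_\omega)\omega^n$ over the central fibre; the minimum norm, by contrast, is an intersection number on a resolution of indeterminacy $\scY\to X\times\pr^1$, and the first term $(g^*\scA).[q^*\omega]^n$ is not an integral over $\X_0$ at all. Relating that term to $\sup_{\X_0} h_\alpha$ is a genuine theorem in the projective case (it is one of the non-trivial identifications in \cite{BHJ1}), and nothing in this paper supplies it in the K\"ahler setting. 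You even flag this as ``the main obstacle'', but then do not surmount it; as written, the argument is circular.

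The paper bypasses this entirely. For (ii)$\Leftrightarrow$(iii) it first assumes $\X$ smooth, takes the associated geodesic $\varphi_s$, and uses the analytic inequality of Darvas--Rubinstein \cite[Proposition 5.5]{DarRub},
\[
\frac{1}{C}\,\frac{dJ(\varphi_s)}{ds}\ \leq\ \|\varphi_s\|_1\ \leq\ C\,\frac{dJ(\varphi_s)}{ds},
\]
then lets $s\to\infty$ and invokes Theorem \ref{jthm} (that the slope of $J$ is the minimum norm) together with Theorem \ref{geodesic-test-config-norm} (that $\|\varphi_s\|_1 = \|(\X,\scA)\|_1$). This yields the \emph{Lipschitz} equivalence $\frac{1}{C}\|(\X,\scA)\|_m \leq \|(\X,\scA)\|_1 \leq C\|(\X,\scA)\|_m$ with a universal $C$, which is strictly stronger than mere equivalence of vanishing and is part of the content of Theorem \ref{intro-norms}. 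The singular case is then handled by the perturbation argument of Proposition \ref{perturbation}. Your route, even if the hamiltonian formula were established, would not yield this Lipschitz comparison.
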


In the projective case, the equivalence of $(i)$ and $(ii)$ is obvious from Donaldson's definition \cite[Section 5.1]{SD}. $(i) \Leftrightarrow (iii)$ was proven independently by the author \cite[Theorem 1.3]{RD} and Boucksom-Hisamoto-Jonsson \cite[Theorem A]{BHJ1}, and $(i) \Leftrightarrow (iv)$ is due to Hisamoto \cite[Theorem 1.2]{TH}. Our logical equivalences will be proven in a slightly different order in the K\"ahler case, and the proofs are very different.

First we recall the definition of the minimum norm. Let $(\X,\scA)$ be a test configuration for $(X,[\omega])$. Denote by $$f: X\times\pr^1 \dashrightarrow \X$$ the natural bimeromorphic map, and take a resolution of indeterminacy: \[
\begin{tikzcd}
\Y \arrow[swap]{d}{q} \arrow{dr}{g} &  \\
X\times\pr^1 \arrow[dotted]{r}{f} & \X
\end{tikzcd}
\]

\begin{definition}\cite{RD,BHJ1} We define the \emph{minimum norm} of $(\X,\scA)$ to be $$\|(\X,\scA)\|_m = (g^*\scA).[q^*\omega]^n - \frac{(g^*\scA)^{n+1}}{n+1}.$$ The minimum norm is also called the \emph{non-Archimedean $J$-functional}, and is independent of choice of resolution of indeterminacy.
\end{definition} The definition we use here differs slightly from the version used in \cite{RD}, but is equivalent.

Just as with the Mabuchi functional and the Donaldson-Futaki invariant, the minimum norm occurs as the slope of a functional on the space of K\"ahler metrics.  

\begin{definition}\cite[p46]{GT2} Let $\varphi$ be a K\"ahler potential for $\omega$. We define the \emph{J-functional} of $\varphi$ as $$J(\varphi) := \int_X \varphi \omega^n - \frac{1}{n+1}\left(\sum_{i=0}^n \int_X \varphi \omega^i\wedge\omega_{\varphi}^{n-i}\right).$$\end{definition}

\begin{theorem}\cite{DR,BHJ2,ZSD}\label{jthm} Suppose $\X$ is smooth. Then $$\lim_{s\to\infty}\frac{dJ(\varphi_{s})}{ds} = \|(\X,\scA)\|_m,$$ where $\varphi_s$ is the geodesic associated to $(\X,\scA)$ as in Theorem \ref{Mabuchi-expansion}.\end{theorem}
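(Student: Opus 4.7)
The plan is to compute the asymptotic slope by passing through a smooth auxiliary path, mirroring the strategy used for the Mabuchi functional in Theorem \ref{Mabuchi-expansion}. Since $J$ is convex along $C^{1,\bar{1}}$ geodesics (by the same Berman-Berndtsson type convexity used for the Monge-Amp\`ere energy $E$), the quantity $dJ(\varphi_s)/ds$ is non-decreasing in $s$, so its limit exists in $(-\infty,+\infty]$ and agrees with $\lim_{s\to\infty} J(\varphi_s)/s$. Pick a smooth $S^1$-invariant relatively K\"ahler form $\eta \in \scA$ and let $\psi_s$ be the associated smooth subgeodesic defined by $\eta_1 - \alpha(t)^*\eta_t = i\ddbar\psi_t$ with $s = -\log|t|^2$.

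Along the smooth path $\psi_s$, decompose $J = I - E$ where $I(\varphi) = \int_X \varphi\,\omega^n$ and $E$ is the Aubin-Mabuchi energy $E(\varphi) = \frac{1}{n+1}\sum_{i=0}^n \int_X \varphi\, \omega^i \wedge \omega_\varphi^{n-i}$. Differentiating directly gives $dI(\psi_s)/ds = \int_X \dot\psi_s\,\omega^n$ and $dE(\psi_s)/ds = \int_X \dot\psi_s\,\omega_{\psi_s}^n$. Using the $\C^*$-equivariance to identify $\dot\psi_s$ with the hamiltonian for $\alpha$ with respect to $\eta$, and integrating by parts on the smooth compactified total space over $\pr^1$, these fiber integrals convert into intersection numbers. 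The standard slope formula gives $\lim_{s\to\infty} dE(\psi_s)/ds = \scA^{n+1}/(n+1)$, while for the linear term one obtains $\lim_{s\to\infty} dI(\psi_s)/ds = (g^*\scA).[q^*\omega]^n$, so that $\lim_{s\to\infty} dJ(\psi_s)/ds = \|(\X,\scA)\|_m$. This is essentially the content of the corresponding computations in \cite{DR,BHJ2,ZSD} carried over to the K\"ahler setting.

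To transfer from the subgeodesic $\psi_s$ to the geodesic $\varphi_s$, observe that since $\Omega$ and $\eta$ are both representatives of the class $\scA$ on the compact compactified total space, we have $\Omega - \eta = i\ddbar u$ for some bounded $S^1$-invariant function $u$. Restricting to fibers yields $\varphi_s - \psi_s = \alpha(t)^*(u|_{\X_t})$, which is uniformly bounded in $C^0$ as $s \to \infty$. By standard continuity estimates for the $J$-functional on uniformly bounded families of potentials, one concludes $|J(\varphi_s) - J(\psi_s)| = O(1)$, so that $\lim dJ(\varphi_s)/ds = \lim dJ(\psi_s)/ds = \|(\X,\scA)\|_m$. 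The main obstacle is the $C^{1,\bar{1}}$ (rather than smooth) regularity of the geodesic, which prevents a direct integration by parts computation on the total space; the detour through a smooth subgeodesic, together with convexity of $J$ along weak geodesics, is essential to bypass this regularity loss.
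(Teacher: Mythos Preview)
Your argument is sound and follows the same overall architecture as the cited references (and the paper's one-line justification after the statement): establish the slope formula along a smooth subgeodesic by direct differentiation and integration by parts on the compactified total space, then transfer to the geodesic. The paper itself does not give a proof here; it simply records that the geodesic version ``follows from \cite{ZSD}, using that the $J$-functional is continuously differentiable along the geodesic.'' So where the paper invokes continuous differentiability of $J$ along $C^{1,\bar 1}$ geodesics to compute $\lim_{s\to\infty} dJ(\varphi_s)/ds$ directly, you instead use convexity of $J$ to pass to $\lim_{s\to\infty} J(\varphi_s)/s$ and then compare with $J(\psi_s)/s$ via the uniform $C^0$ bound $\|\varphi_s-\psi_s\|_{C^0}\le C$ coming from the global $C^{1,\bar 1}$ regularity of the geodesic on the smooth total space. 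Both transfer mechanisms are standard; yours has the mild advantage of not needing the full differentiability statement from \cite{ZSD}, at the cost of needing convexity.

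One small correction: the convexity of $J$ along weak geodesics is not ``the same Berman--Berndtsson type convexity used for the Monge--Amp\`ere energy $E$.'' The Aubin--Mabuchi energy $E$ is in fact \emph{affine} along geodesics (its derivative $\int_X\dot\varphi_s\,\omega_{\varphi_s}^n$ is constant by the geodesic equation), while $I(\varphi)=\int_X\varphi\,\omega^n$ is convex because $\Omega_\Phi\wedge(\pi_1^*\omega)^n\ge 0$ as a measure whenever $\Omega_\Phi\ge 0$. Thus $J=I-E$ is convex for elementary reasons, not via the deep subharmonicity results of \cite{BB}. This does not affect the validity of your argument, only the attribution.
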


The precise version we need along the geodesic follows from \cite{ZSD}, using that the J-functional is continuously differentiable along the geodesic.

\begin{proof}[Proof of Theorem \ref{norms}]  \ 

$(i) \Leftrightarrow (ii)$ This is obvious by definition of the norm.

$(ii) \Leftrightarrow (iii)$ We first assume that $\X$ is smooth. Let $\varphi_s$ be the geodesic associated to the test configuration.  By the derivative  of \cite[Proposition 5.5]{DarRub}, there exists a $C>1$ independent of $\varphi_s$ such that $$\frac{1}{C}\frac{dJ(\varphi_{s})}{ds}  \leq \|\varphi_s\|_1 \leq C \frac{dJ(\varphi_{s})}{ds}.$$ Indeed, the first inequality is proven to follow from \cite[Proposition 5.5]{DarRub} in \cite[Proposition 5.26]{DR}. The second inequality then follows from the characterisation of the $d_1$-pseudometric \cite[Definition 4.2]{DarRub} given in \cite[Theorem 4.3]{DarRub}. Taking the limit as $s\to\infty$ gives the result by Theorem \ref{jthm} since $\X$ is smooth. But it follows that for all smooth $(\X,\scA)$ we have $$\frac{1}{C}\|(\X,\scA)\|_m\leq \|(\X,\scA)\|_1 \leq C\|(\X,\scA)\|_m$$ for some $C>1$ \emph{independent of} $(\scX,\scA)$. We thus obtain the same inequality for \emph{all} $(\X,\scA)$ by a perturbation argument similar to Proposition \ref{perturbation}. This then provides the desired conclusion.

$(ii) \Leftrightarrow (iv)$ This is the hardest part of the proof in the projective case, and for us follows from Theorem \ref{geodesic-test-config-norm}.
\end{proof}

This also shows that ``$L^1$-uniform K-stability'' in the sense of Sz\'ekelyhidi \cite{GS4} is equivalent to uniform K-stability with respect to the minimum norm \cite{RD,BHJ1} (also called J-uniform K-stability), i.e. we have proven that there exists a universal constant $C>1$ such that $$\frac{1}{C}\|(\X,\scA)\|_m \leq \|(\X,\scA)\|_1\leq C\|(\X,\scA)\|_m.$$ In the projective case this is due to Boucksom-Hisamoto-Jonsson \cite{BHJ1}. Again in the projective case, we proved \cite[Theorem 1.3]{RD}\cite[Theorem 6.19]{BHJ1} we proved that a test configuration satisfies $\|(\scX,\scA)\|_m>0$ if and only if $\X$ is not equivariantly isomorphic to $X\times\C$ (our definition of a test configuration requires $\X$ to be normal); it would be interesting to prove this in the K\"ahler case. 

\section{Extremal metrics and relative K-stability}\label{relative-proof-sec}
\subsection{Relative K-semistability}

The main result of this section is:

\begin{theorem}\label{relsemi} Suppose $(X,[\omega])$ admits an extremal K\"ahler metric. Then $(X,[\omega])$ is relatively K-semistable. \end{theorem}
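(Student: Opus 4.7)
The plan is to mirror the proof of Theorem~\ref{lowerboundcalabi} after introducing a modified Mabuchi functional that ``factors out'' the extremal vector field. Let $\omega$ be an extremal K\"ahler metric in $[\omega]$, with extremal holomorphic vector field $\chi$ normalised so that $S(\omega) - \hat S = h_\chi(\omega)$, and choose a maximal torus $T$ of $\Aut_0(X)$ whose compact form is contained in the isometry group of $\omega$ (so in particular $\chi\in\Lie T$); such a $T$ exists by Calabi's theorem. It then suffices to show $\DF_T(\X,\scA)\geq 0$ for every $T$-invariant test configuration $(\X,\scA)$.

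The heart of the argument is the modified Mabuchi functional
\[
\scM_\chi(\varphi) := \scM(\varphi) - \int_0^1\!\int_X \dot\varphi_t\,h_\chi(\omega_{\varphi_t})\,\omega_{\varphi_t}^n\,dt,
\]
defined on $T$-invariant potentials, whose derivative along a path is $-\int_X \dot\varphi\,(S(\omega_\varphi)-\hat S - h_\chi(\omega_\varphi))\,\omega_\varphi^n$. The initial slope of $\scM_\chi$ along any path starting at the extremal $\omega$ therefore vanishes. The first main task is to show that $\scM_\chi$ is convex along $T$-invariant $C^{1,\bar 1}$ geodesics $\varphi_s$. Via Berman--Berndtsson convexity of $\scM$, this reduces to showing that the pairing $\int_X \dot\varphi_s\,h_\chi(\omega_s)\,\omega_s^n$ is independent of $s$ along such geodesics. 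This invariance is the natural extension to pairings with a Killing field of both Lemma~\ref{Berndtsson-norm} and the Futaki--Mabuchi-type invariance used in Theorem~\ref{geodesic-test-config-norm}: it is verified for smooth geodesics by differentiating in $s$ and using the geodesic equation together with the Killing property of $\chi$ for each $\omega_s$ (which follows from $T$-invariance of the geodesic), and then extended to $C^{1,\bar 1}$ geodesics by approximation by smooth paths in the same class.

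Assume first that $\X$ is smooth and $\X_0$ is reduced, and let $\varphi_s$ be the $T$-invariant geodesic emanating from $\omega$ associated to $(\X,\scA)$. Combining Theorem~\ref{Mabuchi-expansion} for the unmodified $\scM$ with the constancy above and the metric-independence of the inner product (the corollary to Theorem~\ref{geodesic-test-config-norm}) yields
\[
\lim_{s\to\infty}\frac{\scM_\chi(\varphi_s)}{s} \;=\; M^{NA}(\X,\scA) - \langle\alpha,\chi\rangle \;\leq\; \DF(\X,\scA) - \langle\alpha,\chi\rangle \;=\; \DF_T(\X,\scA),
\]
where the final equality uses the identity $F(\beta_i) = \langle\beta_i,\chi\rangle$ at an extremal metric together with the orthogonal expansion of $\chi$ in the basis $\{\beta_i\}$. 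Convexity of $\scM_\chi$ and vanishing initial slope give $\scM_\chi(\varphi_s)\geq \scM_\chi(\varphi_0)$ for all $s\geq 0$, hence $\lim_{s\to\infty}\scM_\chi(\varphi_s)/s \geq 0$, and therefore $\DF_T(\X,\scA)\geq 0$ in this case. The general case is reduced to this one by a $T$-equivariant version of Proposition~\ref{perturbation}: equivariant resolution of singularities and equivariant semistable reduction preserve the $T$-action, while both $\DF$ and the inner products $\langle\alpha,\beta_i\rangle$ vary continuously under the perturbations involved, and the condition $\DF_T\geq 0$ is closed.

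The main obstacle is the constancy lemma for $\int_X \dot\varphi_s\,h_\chi(\omega_s)\,\omega_s^n$ along weak $C^{1,\bar 1}$ geodesics, on which the convexity of $\scM_\chi$ ultimately rests; one must argue by approximation of the weak geodesic by smooth subgeodesics, in the spirit of the proof of Theorem~\ref{geodesic-test-config-norm}. A subsidiary technical point is verifying that the Mabuchi expansion transfers cleanly to $\scM_\chi$, with correction term producing exactly $\langle\alpha,\chi\rangle$ in the limit. This amounts to identifying the boundary pairing $\int_X \dot\varphi_0\,h_\chi(\omega)\,\omega^n$ with the Futaki--Mabuchi inner product $\langle\alpha,\chi\rangle$ via the metric-invariance of the latter.
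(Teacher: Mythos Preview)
Your approach is viable but genuinely different from the paper's. The paper does \emph{not} introduce a modified Mabuchi functional; instead it leverages the already-established lower bound on the Calabi functional (Theorem~\ref{lowerboundcalabi} with $p=q=2$) together with the Stoppa--Sz\'ekelyhidi twisting trick. Concretely, starting from
\[
\frac{\DF(\X,\scA,\alpha)}{\|(\X,\scA)\|_2}\geq -\|S(\omega)-\hat S\|_2=-\|\chi\|_2,
\]
one replaces $\alpha$ by $\alpha+c\chi$ to make it orthogonal to $\chi$ (this leaves $\DF_T$ unchanged, by Proposition~\ref{propa} and Lemma~\ref{propb}), and then, assuming $\DF(\alpha)<0$, rescales and twists to $\lambda\alpha-\chi$ to force $\DF/\|\cdot\|_2<-\|\chi\|_2$, a contradiction. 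Your route instead bypasses the Calabi bound entirely, building convexity of $\scM_\chi$ from affineness of the correction term and reading off $\DF_T\geq 0$ as the asymptotic slope. The paper's approach has the advantage of reusing Theorem~\ref{lowerboundcalabi} wholesale and needing only linearity of $\DF$ and bilinearity of $\langle\cdot,\cdot\rangle$ under twists; yours is conceptually more direct for the extremal problem but front-loads the analysis into the affineness lemma for $\int_X\dot\varphi_s\,h_\chi(\omega_s)\,\omega_s^n$ along weak $T$-invariant geodesics, which you correctly flag as the crux.

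Two cautions on your write-up. First, there is a sign inconsistency: with your definition $\scM_\chi=\scM-\int\!\!\int\dot\varphi_t h_\chi\,\omega_t^n$, the derivative is $-\int\dot\varphi(S-\hat S+h_\chi)\,\omega^n$, not $-\int\dot\varphi(S-\hat S-h_\chi)\,\omega^n$; to obtain the latter (and hence criticality at the extremal metric) you need $\scM_\chi=\scM+\int\!\!\int\dot\varphi_t h_\chi\,\omega_t^n$. Tracing this through, the asymptotic slope becomes $M^{NA}+I(0)$, and matching this with $\DF_T=\DF-\langle\alpha,\chi\rangle$ forces $I(0)=-\langle\alpha,\chi\rangle$; this sign must come from the precise relation between $\dot\varphi_0$ and $h_\alpha|_{\X_1}$ in the paper's conventions (Definition~\ref{hamiltonian-def} versus the geodesic parametrisation $s=-\log|t|^2$), so check this carefully. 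Second, the affineness of the correction along \emph{weak} geodesics is more delicate than you indicate: approximating a $C^{1,\bar 1}$ geodesic by smooth subgeodesics does not immediately give affineness of a functional along the limit, only along the approximants; you will want an argument closer in spirit to Berndtsson's proof of Lemma~\ref{Berndtsson-norm}, working directly with the degenerate Monge--Amp\`ere equation on $X\times\Delta$.
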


Here relative K-semistability just means that for each test configuration, we have $\DF_T(\scX,\scA)\geq 0$, where $T$ is a maximal torus of automorphisms. More generally, our proof works with $T$ replaced by a torus containing the extremal vector field (whose definition we will shortly recall). Once we prove some properties of the inner product and the norm, the proof of this will follow from the lower bound on the Calabi functional that we have proven. For these properties, we let $(\scX,\scA)$ be a test configuration with $\C^*$-action $\alpha$ and a vertical $\C^*$-action $\beta$. More generally we allow $\beta \in \mathfrak{t} = \Lie(T)$ not necessarily rational (where we are abusing notation in the obvious manner for rational $\beta$, which then generate a $\C^*$-action). To emphasise the dependence on the $\C^*$-action, we write $\DF(\X,\scA,\alpha)$ for the Donaldson-Futaki invariant with respect to $\alpha$. 

\begin{definition}\label{DF-irrational} We define the \emph{Donaldson-Futaki invariant with respect to} $\alpha+\beta$ to be $$\DF(\X,\scA,\alpha+\beta) = \DF(\X,\scA,\alpha) + F(X,[\omega],\beta),$$ where $F(X,[\omega],\beta)=\int_X h_{\beta}(S(\omega) - \hat S) \omega^n$ is the usual Futaki invariant with respect to $\beta\in\mathfrak{t}$.\end{definition}

\begin{proposition}\label{propa}This is well defined, i.e. agrees with the usual definition when $\beta$ is rational and hence generates a $\C^*$-action. \end{proposition}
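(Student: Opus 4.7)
The plan is to express both sides of the claimed identity as asymptotic slopes of the Mabuchi functional along paths of K\"ahler metrics on $X$, via Theorem \ref{Mabuchi-expansion}, and then to compare the two paths using the cocycle and biholomorphism-invariance properties of the Mabuchi functional.

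First I would reduce to the case that $\X$ is smooth and $\X_0$ is reduced. Because $\beta$ is vertical, any resolution of singularities of $\X$ and any semi-stable base change admit a natural lift of the $\C^*$-action generated by $\beta$, so the perturbation procedure of Proposition \ref{perturbation} can be carried out equivariantly. Under such perturbations, both $\DF(\X,\scA,\alpha)$ and $\DF(\X,\scA,\alpha+\beta)$ transform in the same controlled way (by a calculation with the hamiltonian identical to the one in the proof of Proposition \ref{perturbation}, using verticality of $\beta$), while $F(X,[\omega],\beta)$ is unchanged since it is a property of the general fibre alone. It therefore suffices to verify the identity up to arbitrarily small error in the smooth, reduced case. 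There Theorem \ref{Mabuchi-expansion} gives
\[
\DF(\X,\scA,\gamma) \;=\; \lim_{s\to\infty}\frac{\scM_\omega(\omega_s^\gamma)}{s}
\]
for each $\gamma\in\{\alpha,\alpha+\beta\}$, where $\omega_s^\gamma\in[\omega]$ denotes the path on $X=\X_1$ obtained by pulling back a smooth $\co\in\scA$ along the one-parameter subgroup generated by $\gamma$ (with $s=-\log|t|^2$).

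The key step is to compare the two paths. Since $\beta$ is vertical and rational, its restriction to the general fibre integrates to a one-parameter subgroup $\sigma_s$ of $\Aut(X,[\omega])$, and the $(\alpha+\beta)$-flow factors as $\alpha_t\circ\sigma_t$; this should yield
\[
\omega_s^{\alpha+\beta} \;=\; \sigma_s^*\omega_s^\alpha.
\]
Combining the cocycle identity $\scM_\omega(\omega_1)+\scM_{\omega_1}(\omega_2)=\scM_\omega(\omega_2)$ with the biholomorphism-invariance $\scM_{F^*\omega}(F^*\omega')=\scM_\omega(\omega')$ would then produce
\[
\scM_\omega(\omega_s^{\alpha+\beta}) \;=\; \scM_\omega(\sigma_s^*\omega)+\scM_\omega(\omega_s^\alpha).
\]
Finally, differentiating $\scM_\omega(\sigma_s^*\omega)$ in $s$ via Definition \ref{Mabuchi1} and using $\sigma_s$-equivariance to recognise the resulting integrand as independent of $s$, one obtains $\scM_\omega(\sigma_s^*\omega)=s\cdot F(X,[\omega],\beta)$. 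Dividing the previous display by $s$ and letting $s\to\infty$ then gives the claim.

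The main obstacle I foresee is the equivariant reduction step: one must check carefully that the resolution-plus-semi-stable-reduction procedure of Proposition \ref{perturbation} can be carried out so as to preserve the structure required to identify $\alpha+\beta$ with a genuine $\C^*$-action on the perturbed total space, and that the resulting hamiltonian splits consistently as $h_{\alpha+\beta}=h_\alpha+h_\beta$ in the sense of Definition \ref{hamiltonian-def}. Modulo this, the argument reduces to the clean Mabuchi-functional calculation above.
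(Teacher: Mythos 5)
Your proposal is correct and follows essentially the same route as the paper: both arguments express the Donaldson--Futaki invariants as asymptotic slopes of the Mabuchi functional via Theorem \ref{Mabuchi-expansion}, use the cocycle property (together with biholomorphism invariance) to split off the contribution of the vertical action, and identify that contribution as $s\cdot F(X,[\omega],\beta)$ from the classical linearity of the Mabuchi functional along one-parameter subgroups of automorphisms. Your additional equivariant reduction to the smooth, reduced case is a sensible precaution that the paper leaves implicit, but it does not change the substance of the argument.
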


\begin{proof} 

We prove this by the relationship between the Donaldson-Futaki invariant and the limit derivative of the Mabuchi functional given in Proposition \ref{Mabuchi-expansion}. The definition of the Mabuchi functional involves a ``fixed'' K\"ahler metric $\omega$, and an another K\"ahler metric $\omega_{\varphi} = \omega + i\ddbar \varphi.$ Instead of writing simply $\scM(\varphi)$, for clarity in proving this result we will denote the Mabuchi functional on $X$ as $\scM_X(\omega,\omega_{\phi})$. In this notation, the key property we will use is the cocycle condition $$\scM_X(\omega_1,\omega_2)+\scM_X(\omega_2,\omega_3)+\scM_X(\omega_3,\omega_1)=0.$$ 

Fix a smooth relatively K\"ahler metric $\co\in\scA$. By the cocycle property we have $$\scM_{\X_1}(\co_1,\alpha(t)^*\beta(t)^*\co_t) = \scM_{\X_1}(\co_1,\alpha(t)^*\co_t) + \scM_{\X_1}(\alpha(t)^*\co_t,\alpha(t)^*\beta(t)^*\co_t).$$ Now $$\scM_{\X_1}(\alpha(t)^*\co_t,\alpha(t)^*\beta(t)^*\co_t) = \scM_{\X_t}(\co_t,\beta(t)^*\co_t).$$ Note that $\scM_{\X_t}(\co_t,\beta(t)^*\co_t) = tF(X,[\omega],\beta)$, since more generally \cite{TM} $$\scM_{\X_t}(\co_t,\beta(s)^*\co_t) = sF(X,[\omega],\beta).$$ The result then follows from Proposition \ref{Mabuchi-expansion}.

\end{proof}

We will also require the following property of the inner product (justifying its name), which follows immediately from the definition. Remark that it holds for general $\beta \in \mathfrak{t}$. 

\begin{lemma}\label{propb} The inner product satisfies $$\langle \alpha+\beta, \alpha+\beta\rangle = \langle \alpha, \alpha\rangle+2\langle \alpha, \beta\rangle + \langle\beta,\beta\rangle.$$ \end{lemma}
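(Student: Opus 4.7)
The plan is to reduce the lemma to the additivity $h_{\alpha+\beta} = h_\alpha + h_\beta$ of the hamiltonian assignment, after which the identity follows by expanding a square and applying bilinearity of the integral. More precisely, once additivity of $h$ is in hand, linearity of integration gives $\hat h_{\alpha+\beta} = \hat h_\alpha + \hat h_\beta$, so setting $f = h_\alpha - \hat h_\alpha$ and $g = h_\beta - \hat h_\beta$ we have
$$\langle \alpha+\beta, \alpha+\beta\rangle = \int_{\X_0}(f+g)^2 \co_0^n = \int_{\X_0} f^2\, \co_0^n + 2\int_{\X_0} fg\, \co_0^n + \int_{\X_0} g^2\,\co_0^n,$$
which is precisely $\langle\alpha,\alpha\rangle + 2\langle\alpha,\beta\rangle + \langle\beta,\beta\rangle$ by Definition \ref{kahlerip}.

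To verify additivity of the hamiltonian, I would argue as follows. Because $\beta$ is vertical and commutes with $\alpha$, the one-parameter subgroup generated by $\alpha + \beta$ is $\gamma(t) = \alpha(t)\beta(t)$. Writing $\alpha(t)^*\co - \co = i\ddbar\varphi_t$ and $\beta(t)^*\co - \co = i\ddbar\psi_t$ as in Definition \ref{hamiltonian-def}, a direct calculation using that $\beta(t)^*$ commutes with $i\ddbar$ yields
$$\gamma(t)^*\co - \co = \beta(t)^*(\co + i\ddbar\varphi_t) - \co = i\ddbar\bigl(\psi_t + \varphi_t\circ\beta(t)\bigr).$$
Set $\chi_t = \psi_t + \varphi_t\circ\beta(t)$. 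At the group parameter $t = 0$ both $\alpha(0)$ and $\beta(0)$ equal the identity and $\varphi_0 = \psi_0 = 0$, so in particular $d\varphi_0 = 0$, and differentiating in $t$ at $t=0$ gives $\dot\chi_0 = \dot\varphi_0 + \dot\psi_0$. Since the hamiltonians $h_\alpha, h_\beta, h_{\alpha+\beta}$ are independent of $t$ (by the discussion following Definition \ref{hamiltonian-def}), they may be evaluated at $t=0$, and this says exactly $h_{\alpha+\beta} = h_\alpha + h_\beta$.

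The only real subtlety is the possible mild irregularity of $\varphi_t, \psi_t$ where $\X$ is singular, but the hamiltonians are continuous functions on $\X$ determined by their values on the smooth locus, so the pointwise identity on the smooth locus extends across $\X_0$ by an approximation argument of the kind used in the proof of Theorem \ref{geodesic-test-config-norm}. For general $\beta \in \mathfrak{t}$ not necessarily generating a $\C^*$-action, the same proof goes through using the real flow $\beta(t) = \exp(t\beta)$, since every step above is local in the group parameter $t$ and makes sense for an arbitrary real holomorphic Killing vector field in $\mathfrak{t}$.
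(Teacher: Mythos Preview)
Your proof is correct and matches the paper's approach: the paper offers no proof beyond the remark that the identity ``follows immediately from the definition,'' and your argument supplies exactly the details behind that remark, namely additivity of the hamiltonian $h_{\alpha+\beta}=h_\alpha+h_\beta$ followed by expanding the square in Definition~\ref{kahlerip}. One small notational slip: for a $\C^*$-action the identity corresponds to $t=1$ rather than $t=0$, but this does not affect the argument.
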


Finally we will need to be able to compute the norm of a vertical $\C^*$-action on the central fibre on the central fibre.

\begin{proposition}\label{norm-independence} Let $\beta$ be a vertical $\C^*$-action on $(\X,\scA)$, and for clarity denote its inner product on $\X_t$ (thought of as a product test configuration for $t\neq 0$, and using Definition \ref{kahlerip} for $t=0$) by $\langle\beta_t,\beta_t\rangle$. Then $\langle \beta_t,\beta_t\rangle$ is independent of $t$. \end{proposition}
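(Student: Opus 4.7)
The plan is to establish constancy of $\langle\beta_t,\beta_t\rangle$ by first reducing the case $t\neq 0$ to a fixed--manifold invariance principle via the $\alpha$-action, and then extending to $t=0$ by continuity of fibre integration.

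I would begin by choosing a smooth representative $\co\in\scA$ which is invariant under the maximal compact subgroup of the torus generated by $\alpha$ and $\beta$; such a form exists by averaging, since $\alpha$ and $\beta$ commute. With this choice, the hamiltonian $h_{\beta}$ of Definition \ref{hamiltonian-def} is a smooth function on $\scX$ (in the sense of K\"ahler spaces), is $\alpha$-invariant modulo constants, and its restriction $h_{\beta}|_{\scX_t}$ is a hamiltonian for $\beta_t$ with respect to $\co_t$.

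For $t\neq 0$, the biholomorphism $\alpha(t)\colon\scX_1\to\scX_t$ commutes with $\beta$ and satisfies $\alpha(t)^*\co_t - \co_1 = i\ddbar(\varphi_t|_{\scX_1})$, where $\varphi_t$ are the smooth $\alpha$-potentials on $\scX$; in particular $\alpha(t)^*\co_t$ lies in the fixed class $[\omega]$ on $X=\scX_1$. Pulling the defining integral back via $\alpha(t)$ therefore realises $\langle \beta_t,\beta_t\rangle$ as an integral over the fixed compact K\"ahler manifold $X$ involving a hamiltonian for the fixed $\C^*$-action $\beta$ with respect to the varying representative $\alpha(t)^*\co_t\in[\omega]$. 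The invariance principle recalled at the start of the proof of Theorem \ref{geodesic-test-config-norm} --- namely that $\int_X f(h_{\omega'})(\omega')^n$ is independent of $\omega'\in[\omega]$ for any $C^1$ function $f$ --- applied first with $f(x)=x$ shows $\hat h_{\beta}(t)$ is independent of $t\in\C^*$, and then with $f(x)=(x-\hat h_{\beta})^2$ gives constancy of $\langle \beta_t,\beta_t\rangle$ on $\C^*$.

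To extend to $t=0$, I would argue by continuity of fibre integration. For the smooth function $h_{\beta}$ and the smooth $(1,1)$-form $\co$ on $\scX$, the map $t\mapsto \int_{\scX_t} F\co_t^n$ is continuous in $t\in\C$ when $F$ equals $h_{\beta}$ or $(h_{\beta}-\hat h_{\beta}(0))^2$, so both $\hat h_{\beta}(t)$ and $\langle\beta_t,\beta_t\rangle$ are continuous. Combined with the preceding step, this forces them to be constant on all of $\C$, proving the proposition. The main obstacle is verifying this continuity through the possibly highly singular central fibre $\scX_0$. I would handle this exactly as in the proof of Theorem \ref{geodesic-test-config-norm}: passing to a log resolution $p\colon\scY\to\scX$ with simple normal crossings central fibre $\scY_0=\sum_i a_i\scY_{0,i}$, the claim reduces to continuity of $\int_{\scY_t}(p^*F)(p^*\co)_t^n$ on a smooth family $\scY\to\C$, which is standard.
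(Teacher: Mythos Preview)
Your proposal is correct and follows essentially the same two-step strategy as the paper: constancy for $t\neq 0$ via the Futaki--Mabuchi invariance of $\int_X f(h_{\omega'})(\omega')^n$ in the class $[\omega]$, and extension to $t=0$ by continuity of the fibre integral. The paper's proof is a terse two sentences citing Futaki--Mabuchi \cite{FM} for the first step and asserting continuity ``by its integral representation'' for the second; you have simply unpacked both steps, in particular making explicit the pullback via $\alpha(t)$ to reduce to a fixed manifold, and the passage to a log resolution to justify continuity through the singular central fibre.
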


\begin{proof} The fact that it is independent of $t$ for $t\neq 0$ is due to Futaki-Mabuchi \cite{FM}, since it is also a continuous function in $t$ (by its integral representation) it must be independent of $t$ for \emph{all} t.\end{proof}

Our hypothesis in proving Theorem \ref{relsemi} is that $(X,[\omega])$ admits an extremal metric, i.e. one satisfying $$\bar\partial \nabla^{1,0}S(\omega) = 0.$$ Denote by $\chi = \nabla^{1,0}S(\omega)$ the extremal vector field. This vector field satisfies the key property that \begin{equation}\label{extremal-eqn}\|S(\omega)-\hat S\|_2 = \frac{F(\chi)}{\|\chi\|_2} = \|\chi\|_2.\end{equation} By Proposition \ref{norm-independence}, the norm $\|\chi\|_2$ can equivalently be calculated on $X$ or $\X_0$.

With these results proven, we can prove Theorem \ref{relsemi} in an identical way to the projective case \cite[Theorem 7]{StSz}. We recall their proof for the reader's convenience.

\begin{proof}[Proof of Theorem \ref{relsemi}]Let $\omega\in [\omega]$ be the extremal metric, which exists by hypothesis, and fix a test configuration $(\X,\scA)$. The lower bound on the Calabi functional of Theorem \ref{intromaintheorem} for $p=q=2$ gives \[\|S(\omega)-\hat S\|_2 \geq -\frac{\DF(\X,\scA)}{\|(\X,\scA)\|_2}.\] Note this holds also for the Donaldson-Futaki invariant with respect to $\alpha +\beta$ for any $\beta\in\mathfrak{t}$ by the continuity of  Definition \ref{DF-irrational} (which is essentially a consequence of Proposition \ref{propa}) and continuity of the inner products (whose definition extends in an obvious way to general $\beta \in \mathfrak{t}$). Combining this with equation (\ref{extremal-eqn}), we get \begin{equation}\label{lowerboundextremal} \frac{\DF(\X,\scA)}{\|(\X,\scA)\|_2} \geq - \|\chi\|_2. \end{equation}

We can assume that $\langle \alpha, \chi\rangle = 0,$ replacing $\alpha$ with $\alpha+c\chi$ for some $\chi$ if not, since by Proposition \ref{propa} the value $\DF_T(\scX,\scA,\alpha)$ is independent of this replacement (where we are including $\alpha$ for clarity). Here we mean the Donaldson-Futaki invariant in the sense of Definition \ref{DF-irrational}. We therefore wish to show $\DF(\X,\scA,\alpha)\geq 0$, since $\alpha$ is now assumed to be orthogonal to $\chi$. 

Suppose not, and set $\lambda$ to be such that $\DF(\X,\scA,\lambda\alpha) = -\lambda\|\alpha\|_2$. Twist the action on $(\scX,\scA)$ once again so that the action is given by $\lambda\alpha-\chi$. We claim that $$\frac{\DF(\X,\scA,\lambda\alpha-\chi)}{\|\lambda\alpha-\chi\|_2} < - \|\chi\|_2.$$ Indeed, since $\langle \alpha,\chi\rangle = 0$, we have $$ \DF(\X,\scA,\lambda\alpha-\chi) = -\| \lambda\alpha\|_2^2 - \|\chi\|_2^2 = -\| \lambda\alpha -\chi\|_2^2.$$ This gives $$\frac{\DF(\X,\scA,\lambda\alpha-\chi)}{\|\lambda\alpha-\chi\|_2} = - \| \lambda\alpha -\chi\|_2  < -\|\chi\|_2,$$ contradicting equation (\ref{lowerboundextremal}). Hence $\DF(\X,\scA,\alpha)\geq 0$ as required.

\end{proof}

\subsection{The proof of relative K-stability}
We now prove relative K-stability of K\"ahler manifolds admitting extremal metrics.

\begin{theorem}\label{bodymain} If $(X,[\omega])$ admits an extremal metric, then it is relatively K-stable.\end{theorem}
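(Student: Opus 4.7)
The plan is to strengthen the K-semistability conclusion of Theorem \ref{relsemi} to strict stability by ruling out the equality case $\DF_T(\X,\scA) = 0$ with $\|(\X,\scA)\|_2 > 0$. First I would exploit the invariance of $\DF_T$ under the modifications $\alpha \mapsto \alpha + c\chi + \sum_i c_i\beta_i$ (consequence of Proposition \ref{propa} together with the cancellation built into the correction term of $\DF_T$) to replace $\alpha$ by its orthogonal projection onto $\mathfrak{t}^\perp$ in the Futaki-Mabuchi inner product. What matters for the hypothesis ``positive norm'' in the definition of K-stability is this orthogonal component: product test configurations arising from $\mathfrak{t}$ are precisely those that get excluded, as in Remark \ref{rmk:projection}. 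With this normalization $\alpha \perp \chi$ and $\DF(\X,\scA,\alpha) = \DF_T(\X,\scA) = 0$.

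Next I would observe that the semistability argument itself cannot be pushed to give a strict contradiction in this regime: it derives contradiction from $\DF(\alpha) < 0$ by choosing $\lambda$ with $\DF(\alpha) = -\lambda\|\alpha\|_2^2$, and in the equality case $\DF(\alpha) = 0$ this forces $\lambda = 0$, so Donaldson's lower bound degenerates to $\|\chi\|_2 \geq \|\chi\|_2$. Nor does passing to $L^p$-twists for a conjugate pair $(p,q)$ in Theorem \ref{lowerboundcalabi} help, since a direct computation shows that $\sup_c (-c\|\chi\|_2^2/\|h_\alpha + c h_\chi\|_q)$ is attained as $|c|\to\infty$ and equals $\|\chi\|_2^2/\|h_\chi\|_q$, which agrees with $\|h_\chi\|_p$ by H\"older with equality. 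The crux is to extract the missing strict information from the Cauchy--Schwarz-type argument underlying Donaldson's bound.

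To do so, I would invoke the geometric content of equality in the proof of Theorem \ref{lowerboundcalabi}. After reducing via Proposition \ref{perturbation} to the case where $\X$ is smooth with reduced central fibre, let $\varphi_s$ be the $C^{1,\bar{1}}$-geodesic ray emanating from the extremal metric $\omega$ associated to $(\X,\scA)$; by Theorem \ref{geodesic-test-config-norm} we have $\|\varphi_s\|_2 = \|(\X,\scA)\|_2 > 0$, so the ray is nontrivial. The extremal vector field $\chi$ supplies a relative Mabuchi functional $\scM_\chi$ with three key properties: (i) $\omega$ is a minimizer (since $\omega$ is extremal), (ii) $\scM_\chi$ is convex along $C^{1,\bar{1}}$-geodesics, namely the $\chi$-twisted version of Berman--Berndtsson, and (iii) its slope at infinity along $\varphi_s$ equals the relative non-Archimedean Mabuchi functional, which agrees with $\DF_T(\X,\scA) = 0$ by reducedness of the central fibre and the $\chi$-twisted analogue of Theorem \ref{Mabuchi-expansion}. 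Combining these forces $\scM_\chi$ to be affine (in fact constant) along $\varphi_s$.

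The final step applies strict convexity of $\scM_\chi$ modulo $\Aut(X,[\omega])^\circ$: any $C^{1,\bar{1}}$-geodesic along which $\scM_\chi$ is affine must arise from a one-parameter subgroup of $\Aut(X,[\omega])^\circ$. Maximality of $T$ then places $\alpha$ in $\mathfrak{t}$, and the orthogonality $\alpha\perp\mathfrak{t}$ forces $\alpha = 0$, contradicting $\|(\X,\scA)\|_2 > 0$. The hardest part will be the analytic input of the last two paragraphs: the relative convexity and strict convexity of $\scM_\chi$ along (possibly singular) K\"ahler geodesics generated by test configurations, together with the relative slope formula, which require adapting the Berman--Berndtsson and Sj\"ostr\"om Dyrefelt theory to the $\chi$-twisted setting. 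This is the genuine analytic content of the theorem beyond the semistability argument of Theorem \ref{relsemi}.
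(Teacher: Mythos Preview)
Your approach is genuinely different from the paper's and, as you yourself flag at the end, its crucial analytic ingredients are not available. The paper does \emph{not} argue via strict convexity of a relative Mabuchi functional. Instead it runs a Stoppa--Sz\'ekelyhidi blowup argument: assuming a test configuration with $\DF_T(\X,\scA)=0$ and positive norm, Proposition \ref{destabilisingpoint} produces a torus-invariant point $p\in X$ with positive Chow weight; blowing up along (the proper transform of) the orbit closure yields a test configuration for $(\Bl_p X,\pi^*[\omega]-\epsilon[E])$ whose Donaldson--Futaki invariant drops by $n(n-1)\epsilon^{n-1}Ch_p(\X,\scA)+O(\epsilon^n)$ (Proposition \ref{DFcomparison}) while the inner products change only by $O(\epsilon^n)$ (Proposition \ref{norm-ip-pert}), so $\DF_T<0$ on the blowup. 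But by Arezzo--Pacard--Singer (Theorem \ref{APS}) this blowup still carries an extremal metric, hence is relatively K-semistable by Theorem \ref{relsemi}, a contradiction. The argument is entirely at the level of numerical invariants and uses no strict-convexity input.

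The gap in your proposal is precisely the one the paper isolates in its ``Comparison with other work'' discussion: the $\chi$-twisted Berman--Berndtsson theory you invoke --- convexity of $\scM_\chi$ along $C^{1,\bar 1}$ geodesics, the relative slope formula, and above all the \emph{strict} convexity modulo $\Aut(X,[\omega])^\circ$ that lets you conclude the geodesic arises from a holomorphic vector field --- is not established in the relative (extremal) setting. The paper says explicitly that Sj\"ostr\"om Dyrefelt's analytic route to K-polystability ``uses deep analytic results on the Mabuchi functional for which the corresponding results for extremal metrics are not known'', which is exactly why it adopts the blowup strategy instead. There is also a smaller technical issue: Proposition \ref{perturbation} only approximates $\DF/\|\cdot\|_q$ up to $\epsilon$, so your reduction to smooth $\X$ with reduced central fibre does not preserve the exact equality $\DF_T=0$ that you need in order to force $\scM_\chi$ affine along the associated geodesic.
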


This furnishes the following corollary in the cscK case.

\begin{corollary} If $(X,[\omega])$ admits an cscK metric, then it is equivariantly K-polystable. \end{corollary}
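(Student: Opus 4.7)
The corollary follows quickly once we unpack the two ingredients that make up equivariant K-polystability in this paper's definitions: it is relative K-stability (with respect to a maximal torus) together with the vanishing of the classical Futaki invariant on every generator of that torus. The plan is to deduce both conditions directly from the hypothesis that $\omega$ is cscK.

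First I would observe that a cscK metric is a special case of an extremal metric: indeed, if $S(\omega)$ is constant then trivially $\bar\partial \nabla^{1,0} S(\omega) = 0$, and in fact the extremal vector field $\chi = \nabla^{1,0} S(\omega)$ is zero. Consequently Theorem \ref{bodymain} applies and $(X,[\omega])$ is relatively K-stable, meaning $\DF_T(\X,\scA) > 0$ for every $T$-invariant test configuration with positive $L^2$-norm, where $T$ is a maximal torus of $\Aut(X,[\omega])$.

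It remains to check $F(\beta_i) = 0$ for each element $\beta_i$ of an orthogonal basis of $\Lie(T)$. This is the classical statement that the K\"ahler Futaki invariant vanishes in any K\"ahler class containing a cscK representative, and in the formulation used in Definition \ref{DF-irrational} it is immediate: using the cscK metric $\omega$ to compute,
\[
F(X,[\omega],\beta_i) = \int_X h_{\beta_i}\,(S(\omega)-\hat S)\,\omega^n = 0,
\]
since $S(\omega)-\hat S \equiv 0$. Combining this with the relative K-stability established in the previous paragraph gives exactly the definition of equivariant K-polystability stated in Section \ref{rel-kstab-kahler-sec}.

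There is essentially no obstacle here beyond checking definitions, since all the analytic work has already been carried out in the proof of Theorem \ref{bodymain}; the only minor point worth spelling out is that the extremal vector field $\chi$ enters the proof of Theorem \ref{bodymain} via the identity \eqref{extremal-eqn}, which in the cscK setting reduces to $0 = 0$ and does not require any modification of the orthogonality argument. Thus the corollary is genuinely an immediate consequence of the main theorem together with the classical vanishing of the Futaki invariant.
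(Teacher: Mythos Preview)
Your proof is correct and follows essentially the same approach as the paper: apply Theorem \ref{bodymain} to conclude relative K-stability (since cscK metrics are extremal), and then observe that the Futaki invariants $F(\beta_i)$ vanish because $S(\omega)-\hat S\equiv 0$. The paper's proof is simply a terser version of what you wrote.
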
 

\begin{proof} This is immediate by the definition of relative K-stability. Indeed, in this case the Futaki invariant $F(\beta)$ vanishes for all vector fields $\beta$, as $[\omega]$ contains a cscK metric. \end{proof}

The proof of Theorem \ref{bodymain} will be by a perturbation argument. Namely, if $(X,[\omega])$ is strictly relatively K-semistable, we will perturb it to an unstable K\"ahler manifold. On the other hand the extremal condition is an ``open'' condition. This is roughly a K\"ahler analogue of the strategy of Stoppa-Sz\'ekelyhidi \cite{StSz}, which in turn is related to Stoppa's strategy in the cscK case \cite{JS}. The key analytic result will be the following, due to Arezzo-Pacard-Singer \cite{APS}. We use the formulation of \cite[Theorem 8]{StSz}.

\begin{theorem}\cite{APS}\label{APS} Suppose $(X,[\omega])$ admits an extremal metric, and let $p\in X$ be a point fixed by a maximal torus of automorphisms. Let $\pi: Bl_pX\to X$ be the blowup and let $E\subset Bl_pX$ be the exceptional divisor. Then $(Bl_pX, \pi^*[\omega]-\epsilon [F])$ admits an extremal metric for all $0<\epsilon\ll 1.$ \end{theorem}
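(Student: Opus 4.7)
The plan is a gluing construction paired with a finite-dimensional (Lyapunov--Schmidt) reduction. Start with the extremal metric $\omega$ on $X$ together with an asymptotically locally Euclidean scalar-flat K\"ahler metric $\eta$ on the blowup $\widetilde{\C^n}$ of $\C^n$ at the origin---the Burns--Simanca metric when $n=2$, and the Calabi metric on $\scO_{\pr^{n-1}}(-1)$ in higher dimensions. In a small ball around $p$, identified via a holomorphic normal chart, rescale $\eta$ by $\epsilon^2$ and glue it to $\pi^*\omega$ in an annular neck using a cutoff at the level of K\"ahler potentials. This produces a one-parameter family of K\"ahler metrics $\omega_\epsilon$ representing $\pi^*[\omega] - c(\epsilon)[E]$ with $c(\epsilon) \to 0$ as $\epsilon\to 0$ (the exponent depending on $n$, with dimensional constants absorbed into $\epsilon$) whose extremal error $\bar\partial \nabla^{1,0}S(\omega_\epsilon)$ is small in a weighted H\"older norm adapted to the neck region.

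Next, linearise the extremal equation: the differential of $\phi \mapsto S(\omega_\epsilon + i\ddbar \phi)$ is, up to a sign, the Lichnerowicz operator, whose kernel on $X$ is precisely the space of holomorphy potentials. The extremal equation should therefore be set up as a system for a pair $(\phi, V)$ with $V \in \Lie T$, namely
\[
S(\omega_\epsilon + i\ddbar \phi) = \hat S_\epsilon + h_{\chi_\epsilon + V},
\]
where $\chi_\epsilon$ is the candidate extremal vector field on $Bl_pX$ and $h_W$ denotes the holomorphy potential of a vector field $W$. One then works in weighted H\"older spaces $C^{k,\alpha}_\delta$ on $Bl_pX$, choosing the weight $\delta$ to avoid the indicial roots of the model Lichnerowicz operator on the Euclidean cone. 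In these spaces the goal is to produce a right-inverse for the Lichnerowicz operator of $\omega_\epsilon$, modulo a finite-dimensional obstruction, whose norm is bounded independently of $\epsilon$; this is achieved by assembling right-inverses on the ALE model and on $X\setminus\{p\}$ separately and patching them across the neck with a controlled error.

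The hypothesis that $p$ is fixed by a \emph{maximal} torus $T \subset \Aut(X,[\omega])$ is what allows the remaining finite-dimensional obstruction to be absorbed. Since $p$ is $T$-fixed, the full torus lifts to $Bl_pX$ and preserves $E$, so one may work $T$-equivariantly throughout. Equivariance kills almost all of the cokernel of the linearised operator, leaving only the contribution from holomorphy potentials of elements of $\Lie T$; this residue is absorbed into $V$ by solving a finite-dimensional equation that reduces, for small $\epsilon$, to a small perturbation of the identity, and is therefore solvable by the standard implicit function theorem. The main obstacle is the analytic one of obtaining uniform-in-$\epsilon$ Schauder-type estimates for the right-inverse in weighted norms: it requires a careful indicial analysis on the Euclidean cone, matching of scaling across the neck, and a gluing lemma ensuring that the right-inverses on the two pieces combine with $\epsilon$-independent constants. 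Once these estimates are in hand, a fixed-point iteration produces an extremal metric in $\pi^*[\omega] - c(\epsilon)[E]$ for all sufficiently small $\epsilon > 0$, which after reparametrisation yields the stated result.
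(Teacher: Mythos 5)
This statement is not proved in the paper at all: it is imported verbatim from Arezzo--Pacard--Singer, in the formulation of Stoppa--Sz\'ekelyhidi, and is used as a black box in the proof of Theorem \ref{bodymain}. Your sketch is a faithful outline of the actual proof in the cited reference: the gluing of a scalar-flat ALE model onto $\pi^*\omega$ near $p$, the reformulation of the extremal equation as $S(\omega_\epsilon+i\ddbar\phi)=\hat S_\epsilon+h_{\chi_\epsilon+V}$ with the unknown vector field $V$ ranging over $\Lie (T)$, the construction of a uniformly bounded right-inverse of the Lichnerowicz operator in weighted H\"older spaces, and the use of $T$-equivariance (with $T$ maximal, so that its centraliser in the reductive automorphism group is $T$ itself) to reduce the cokernel to torus holomorphy potentials, which are then absorbed into $V$. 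Two small points: in higher dimensions the correct ALE model is the scalar-flat K\"ahler metric of Simanca on $\scO_{\pr^{n-1}}(-1)$, not the Ricci-flat Calabi metric, which lives on $\scO_{\pr^{n-1}}(-n)$ (the resolution of $\C^n/\Z_n$, not of a blown-up smooth point); and the uniform-in-$\epsilon$ estimates for the patched right-inverse, which you correctly identify as the main analytic obstacle, constitute the bulk of the work in the original paper and are only asserted here --- which is acceptable for a result the paper itself takes as given.
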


Remark that since $p$ is fixed by the torus action, a maximal torus torus of automorphisms of $(X,[\omega])$ induces a maximal torus of automorphisms of $(Bl_pX, \pi^*[\omega]-\epsilon [F])$.

Now suppose $(X,[\omega])$ is strictly relatively K-semistable. This means that there exists a test configuration $(\scX,\scA)$ with $\DF_T(\scX,\scA) = 0$ but with $\|(\scX,\scA)\|_2>0.$ For $p\in X$, let $C=\overline{\C^*.p}$ be the closure of the orbit of $p$ under the $\C^*$-action. Define the \emph{Chow weight} of $p$ to be $$Ch_p(\scX,\scA) = \frac{\scA^{n+1}}{(n+1)[\omega]^n} - \int_{C}\scA.$$ This agrees with the Chow weight of the specialisation of $p$ in the usual geometric invariant theoretic sense when $\X$ is projective and $\scA$ is the first Chern class of a line bundle.

Suppose that $C$ is smooth. Provided $p$ is invariant under a maximal torus, $C$ will be also. Let $\scY=Bl_C\X \to \X$ be the blowup with exceptional divisor $F$. As $C$ is torus invariant, $(Bl_C\X,\scA-\epsilon [E])$ is also torus invariant. It is therefore an equivariant test configuration for $(Bl_pX, \pi^*[\omega]-\epsilon [F])$ \cite[p19]{DR}.

\begin{proposition}\cite{JS,JS2}\cite[Proposition 5.4]{DR}\label{DFcomparison} Suppose $\X$ and $C$ are smooth. The Donaldson-Futaki invariant of $(\scY,\scA-\epsilon [E])$ satisfies $$\DF(\scY,\scA-\epsilon [E]) = \DF(\scX,\scA) - n(n-1)\epsilon^{n-1}Ch_p(\scX,\scA) + O(\epsilon^n).$$ \end{proposition}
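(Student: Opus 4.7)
The plan is to prove this via a direct intersection-theoretic expansion on the blowup $\scY = \Bl_C \scX$, treating both factors in $\DF(\scY, f^*\scA - \epsilon[E])$ separately and extracting the coefficient of $\epsilon^{n-1}$. Two pieces of blowup geometry do all the work: first, since $C$ has codimension $n$ in the smooth space $\scX$, the canonical class transforms as $c_1(\scY) = f^*c_1(\scX) - (n-1)[E]$; second, the exceptional divisor $E$ is the projectivisation $\mathbb{P}(N_{C/\scX})$, a $\mathbb{P}^{n-1}$-bundle $p \colon E \to C$ over a curve, with $E|_E = -\xi$ for $\xi = c_1(\mathcal{O}_{\mathbb{P}(N)}(1))$, and since $C$ is one-dimensional, Segre classes $s_i(N_{C/\scX})$ vanish for $i \geq 2$. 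Consequently $p_*\xi^j$ is nonzero only for $j \in \{n-1, n\}$, and intersection numbers of the form $(f^*\scA)^{n+1-k} \cdot E^k$ or $f^*\beta \cdot (f^*\scA)^{n-k}\cdot E^k$ (with $\beta = c_1(\scX) - \pi^*c_1(\pr^1)$) vanish unless $k \geq n$. This is the structural reason why all orders $\epsilon, \ldots, \epsilon^{n-2}$ will be seen to vanish automatically.

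Step by step, I would first expand $(f^*\scA - \epsilon[E])^{n+1}$ and observe that the leading correction to $\scA^{n+1}$ appears only at order $\epsilon^n$ (with value $-(n+1)\int_C \scA$), contributing nothing at order $\epsilon^{n-1}$. Next I would expand the canonical term, splitting $-(c_1(\scY) - (\pi f)^*c_1(\pr^1))\cdot(f^*\scA - \epsilon E)^n$ as $-f^*\beta\cdot(f^*\scA - \epsilon E)^n + (n-1)E\cdot(f^*\scA - \epsilon E)^n$. The first summand contributes $-\beta\cdot\scA^n$ at order $\epsilon^0$ and nothing at order $\epsilon^{n-1}$. The second summand produces the first nonzero correction at order $\epsilon^{n-1}$, namely $(n-1)\cdot n\epsilon^{n-1}\int_C\scA$, computed via $\int_{\scY}(f^*\scA)\cdot E^n = (-1)^{n-1}\int_E \xi^{n-1}\cdot p^*(\scA|_C) = (-1)^{n-1}\int_C\scA$ combined with the binomial coefficient $\binom{n}{n-1} = n$ and the sign $(-\epsilon)^{n-1}(-1)^{n-1} = \epsilon^{n-1}$.

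The remaining contribution at order $\epsilon^{n-1}$ comes from expanding the generic-fibre slope $\mu(\Bl_p X, \pi^*[\omega] - \epsilon [F])$ appearing in the volume term. A parallel self-intersection calculation on $\Bl_p X$, using $F^n = (-1)^{n-1}$ and $c_1(\Bl_p X) = \pi^*c_1(X) - (n-1)[F]$, gives $(\pi^*[\omega] - \epsilon F)^n = [\omega]^n - \epsilon^n$ and $c_1(\Bl_p X)\cdot(\pi^*[\omega] - \epsilon F)^{n-1} = c_1(X)\cdot[\omega]^{n-1} - (n-1)\epsilon^{n-1}$, hence $\mu(\Bl_p X, \pi^*[\omega] - \epsilon F) = \mu(X,[\omega]) - (n-1)\epsilon^{n-1}/[\omega]^n + O(\epsilon^n)$. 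Multiplying by $\tfrac{n}{n+1}\scA^{n+1}$ yields a contribution $-\tfrac{n(n-1)\epsilon^{n-1}}{(n+1)[\omega]^n}\scA^{n+1}$ at the relevant order.

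Collecting the two $\epsilon^{n-1}$ contributions gives
\[
-n(n-1)\epsilon^{n-1}\left(\frac{\scA^{n+1}}{(n+1)[\omega]^n} - \int_C \scA\right) = -n(n-1)\epsilon^{n-1}\,Ch_p(\scX,\scA),
\]
which is exactly the claimed first-order correction. The main obstacle is purely bookkeeping: one must verify carefully that \emph{all} intermediate orders $\epsilon^1, \ldots, \epsilon^{n-2}$ vanish, which follows cleanly from the Segre-class vanishing $s_i(N_{C/\scX}) = 0$ for $i \geq 2$ forcing intersections to concentrate at top powers of $E$, but requires a consistent treatment of signs coming from $(-\xi) = E|_E$ and the $(-1)^{n-1}$ factor in $F^n$ on $\Bl_p X$.
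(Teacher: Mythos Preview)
Your computation is correct. The paper does not actually give its own proof of this proposition; it is quoted from \cite{JS,JS2} and \cite[Proposition 5.4]{DR}, so there is no in-paper argument to compare against. Your direct intersection-theoretic expansion on the blowup---using the codimension-$n$ canonical bundle formula $c_1(\scY)=f^*c_1(\scX)-(n-1)[E]$, the identification $E|_E=-\xi$ on the $\pr^{n-1}$-bundle $E\to C$, and the vanishing of higher Segre classes of $N_{C/\scX}$ over a curve---is exactly the standard route taken in the cited references, and your bookkeeping of the two $\epsilon^{n-1}$ contributions (from the slope change and from the $(n-1)E$ term in the relative canonical class) is accurate.
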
 

\begin{remark}\label{szk-rational} Sz\'ekelyhidi proves an analogue of Proposition \ref{DFcomparison} for the Futaki invariant of vector fields that are not necessarily rational. \end{remark}

Here we will prove an analogue of this for the norms and inner products. Abusing notation, we will denote by $\beta_i$ an orthogonal basis of generators of the maximal torus of automorphisms of $(X,[\omega])$ \emph{and} of $(Bl_pX, \pi^*[\omega]-\epsilon [E])$. We will also suppress the obvious pullbacks of classes to $\Y$.

\begin{proposition}\label{norm-ip-pert} Suppose $\X_0$ is a simple normal crossings divisor. The inner products satisfy $$\langle \alpha,\beta_i\rangle_{(\scY,\scA-\epsilon [E])} = \langle\alpha,\beta_i \rangle_{(\scX,\scA)} +  O(\epsilon^n),$$ and similarly for $\langle \beta_i,\beta_i\rangle$. \end{proposition}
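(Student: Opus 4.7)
The plan is parallel to the proof of Proposition \ref{DFcomparison}, but tracks the dependence on $\epsilon$ more carefully. Since $\alpha$ and $T$ commute (as $\beta_i$ is a vertical action), and both preserve the centre $C$ and hence the exceptional divisor $E$, by averaging over the maximal compact subgroup of the $\C^* \times T$-action I may choose a smooth invariant $\co \in \scA$ and a smooth invariant $(1,1)$-form $\nu$ representing $[E]$. Set $\co_\epsilon := f^*\co - \epsilon\nu$, which represents $f^*\scA - \epsilon[E]$ and is relatively K\"ahler for $0 < \epsilon \ll 1$.

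The key observation is that, with this choice, the hamiltonians $h^E_\alpha$ and $h^E_{\beta_i}$ of $\alpha$ and $\beta_i$ with respect to $\nu_0$ (in the sense of Definition \ref{hamiltonian-def}) are spatial constants. Indeed, $\alpha$-invariance of $\nu$ forces $i\ddbar \nu^\alpha(t) = 0$, so $\nu^\alpha(t)$ is pluriharmonic and hence a function of $t$ alone by compactness of the fibres; similarly for $\beta_i$. Thus $h_{\alpha, \epsilon} = f^*h_\alpha - \epsilon c^E_\alpha$ and $h_{\beta_i, \epsilon} = f^*h_{\beta_i} - \epsilon c^E_{\beta_i}$ with $c^E_\alpha, c^E_{\beta_i} \in \R$. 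These additive constants cancel upon normalisation, and so both $\langle \alpha, \beta_i \rangle_{(\Y, \scA - \epsilon[E])}$ and $\langle \beta_i, \beta_i \rangle_{(\Y, \scA - \epsilon[E])}$ reduce to expressions involving integrals of the form $\int_{\Y_0} f^*\phi \cdot \co_\epsilon^n$ for smooth functions $\phi : \X_0 \to \R$.

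The core estimate is that for any such $\phi$,
\[
\int_{\Y_0} f^*\phi \cdot \co_\epsilon^n \;=\; \int_{\X_0} \phi \, \co_0^n \;+\; O(\epsilon^n).
\]
Expanding binomially, one must show that the terms $\int_{\Y_0} f^*\phi \cdot (f^*\co_0)^{n-k}\,\nu_0^k$ for $1 \le k \le n - 1$ contribute together only an $O(\epsilon^n)$ error. The centre $C$ meets $\X_0$ in the unique point $p_0 := \lim_{t\to 0} t\cdot p$, fixed by both $\alpha$ and $T$ (as $p$ is $T$-invariant), so $E_0 = E \cap \Y_0$ sits above $p_0$. By the projection formula each such integral is controlled by $f_*(\nu_0^k)$, whose cohomology class vanishes via the blowup identity $f_*[E]^k = 0$ for $k < \mathrm{codim}_\X C = n$; the snc hypothesis on $\X_0$ allows this to be performed componentwise on each smooth irreducible component. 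The delicate point, and the main obstacle, is upgrading this cohomological vanishing to the sharp $O(\epsilon^n)$ bound in the presence of the non-closed coefficient $\phi$: one integrates by parts using the closedness of $\co_0$ and $\nu_0$, writes $f_*(\nu_0^k) = i\ddbar T_k$ for a current $T_k$ supported at $p_0$, and uses that $p_0$ is a common critical point of $h_\alpha$ and $h_{\beta_i}$ (as a fixed point of both actions) to bound the resulting localised contributions at $p_0$. Substituting the estimate into the reduction above yields the claim, and the same argument gives the analogous statement for $\langle \beta_i, \beta_i\rangle$.
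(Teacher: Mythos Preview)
Your argument has a genuine gap at the ``key observation'': the hamiltonians $h^E_\alpha$, $h^E_{\beta_i}$ for $\nu$ are \emph{not} spatial constants in general. Averaging over the maximal compact subgroup of $\C^*\times T$ yields only $S^1\times (S^1)^d$-invariance of $\nu$, not invariance under the full $\C^*$-action. Thus $\alpha(t)^*\nu=\nu$ holds only for $|t|=1$, so the potential $\nu^\alpha(t)$ is not pluriharmonic and your conclusion fails. Concretely, whenever the linearised $\C^*$-action on the normal space $T_{p_0}\X_0$ has unequal weights, the induced action on $E_0\cong\pr^{n-1}$ is nontrivial, and $h^E_\alpha$ restricted to $E_0$ is the (non-constant) moment map for this linear action. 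So the reduction to integrals of the form $\int_{\Y_0} f^*\phi\,\co_\epsilon^n$ collapses: the genuine hamiltonian on $\Y_0$ is $f^*h_\alpha-\epsilon h^E_\alpha$ with $h^E_\alpha$ a nontrivial function, and the cross-terms you dropped reappear.

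Even granting the reduction, your ``core estimate'' is not established. The cohomological vanishing $(f^*[\co_0])^{n-k}\!\cdot\![E_0]^k=0$ for $1\le k\le n-1$ does give $\int_{\Y_0}(f^*\co_0)^{n-k}\wedge\nu_0^k=0$, but once you insert the non-closed factor $f^*\phi$ this no longer follows: a top form with vanishing integral is exact, yet $\int f^*\phi\, d\eta=-\int d(f^*\phi)\wedge\eta$ has no reason to vanish, and the sketch with $f_*(\nu_0^k)=i\ddbar T_k$ and critical-point localisation does not produce the \emph{exact} vanishing needed term-by-term at orders $\epsilon,\dots,\epsilon^{n-1}$.

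The paper sidesteps both issues by a different route: it works component-by-component on the snc central fibre $\X_0=\sum a_j\X_{0,j}$, observes that any extra components of $\Y_0$ not arising as $Bl_q\X_{0,j}$ carry trivial hamiltonian, and then for each smooth $\X_{0,j}$ invokes Sz\'ekelyhidi's observation that the comparison is local near the blown-up point and hence reduces to the (known) projective calculation. This avoids having to control the hamiltonian of $\nu$ or the individual cross-terms directly.
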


The next result guarantees the existence of an equivariant ``destabilising point''. 

\begin{proposition}\label{destabilisingpoint} Suppose $\|(\scX,\scA)\|_2>0$. Then there exists a torus invariant $p\in X$ such that $Ch_p(\scX,\scA)>0$. \end{proposition}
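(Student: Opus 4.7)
The plan is to interpret the Chow weight as a difference of hamiltonian values and then produce the required $T$-fixed $p$ via a Bialynicki-Birula type argument applied to the $T$-fixed subspace of $\scX$.

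First I would establish the identity
\[ Ch_p(\scX,\scA) = \hat h_\alpha - h_\alpha(p_0), \]
where $p_0 = \lim_{t\to 0} t\cdot p$ is the specialisation of $p$ in $\X_0$ and $h_\alpha$ is the hamiltonian of Definition \ref{hamiltonian-def}. Both terms come from integral computations on the compactified $\overline{\scX} \to \pr^1$: the equality $\hat h_\alpha = \scA^{n+1}/((n+1)[\omega]^n)$ follows from integrating $\co^{n+1}$ and using that the glued trivial family over $\infty$ contributes zero in the canonical hamiltonian normalisation, while the analogous computation restricted to the orbit closure $C = \overline{\C^*\cdot p}$ yields $\int_C \scA = h_\alpha(p_0)$.

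The hypothesis $\|(\scX,\scA)\|_2 > 0$ then says precisely that $h_\alpha$ is non-constant on $\X_0$, so $m := \min_{\X_0} h_\alpha < \hat h_\alpha$. Since $h_\alpha$ is a hamiltonian for the $S^1$-part of the $\C^*$-action on the smooth locus (one reduces to this case via an equivariant log-resolution as in the proof of Theorem \ref{geodesic-test-config-norm}), the minimum is attained at a $\C^*$-fixed point; the $T$-invariance of the test configuration forces $h_\alpha$ and hence its minimum locus to be $T$-invariant. Let $Z$ be a connected component of this minimum locus inside $\X_0^{T\times\C^*}$.

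The last step is to realise some point of $Z$ as the specialisation of a $T$-fixed point of $X$ by applying a Bialynicki-Birula decomposition to the closed $T$-fixed subspace $\scX^T \subset \scX$ with its induced $\C^*$-action. The weights of $\C^*$ on the normal bundle $N_{Z/\scX^T}$ are strictly positive: on $N_{Z/\X_0^T}$ they are positive by minimality of $h_\alpha$ along $Z$ (any zero direction would be tangent to $Z$), while on the normal to $\X_0$ in $\X$ the weight equals $+1$. Hence $Z$ is a source of the $\C^*$-action on $\scX^T$, with open attractor $W^+(Z) \subset \scX^T$. Since $W^+(Z)$ is open, $\C^*$-invariant, and contains points of $\X_0$, it meets $\scX_s^T = X^T$ for every $s \neq 0$; picking $p$ in this intersection and transporting via the trivialisation $\scX|_{\C^*} \cong X \times \C^*$ yields a $T$-fixed $p \in X$ with $\lim_{t\to 0} t\cdot p \in Z$, whence $h_\alpha(p_0) = m$. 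Combined with the identity above this gives
\[ Ch_p(\scX,\scA) = \hat h_\alpha - m > 0. \]

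The main obstacle is the Bialynicki-Birula step when $\scX$ is singular, since the classical decomposition requires smoothness. This can be handled by a $T\times\C^*$-equivariant resolution of $\scX$ combined with a perturbation argument in the spirit of Proposition \ref{perturbation}, which preserves positivity of the norm and changes the Chow weight by an arbitrarily small amount, reducing to the smooth case.
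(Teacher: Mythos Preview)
Your approach is genuinely different from the paper's. The paper does not run a Bialynicki--Birula argument at all: it quotes \cite[Proposition 5.5]{DR} to obtain a (not necessarily $T$-fixed) point $p$ with $Ch_p(\scX,\scA)>0$, then uses that $h_\alpha$ is $\beta_i$-invariant (since $\alpha$ and $\beta_i$ commute) together with the identity $Ch_p(\scX,\scA) = h_\alpha(\lim_{t\to 0}\alpha(t).p)$ to show that replacing $p$ by $\lim_{s\to 0}\beta_i(s).p$ leaves the Chow weight unchanged. Iterating over an orthogonal basis of $T$ gives a $T$-fixed point with the same positive Chow weight. This is shorter and avoids all the smoothness and resolution issues you flag.

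There is a concrete gap in your argument: your sign in the identity $Ch_p(\scX,\scA) = \hat h_\alpha - h_\alpha(p_0)$ is the opposite of the paper's. After normalising $\hat h_\alpha = 0$, the paper records (via \cite[Lemma 5.17]{DR}) that $Ch_p(\scX,\scA) = h_\alpha(p_0)$, i.e.\ in general $Ch_p = h_\alpha(p_0) - \hat h_\alpha$. So you should be seeking $p_0$ near the \emph{maximum} of $h_\alpha$, not the minimum. This matters because your source/open-attractor claim rests on the assertion that the tangential weights are positive at a minimum of $h_\alpha$; with the paper's hamiltonian convention you should check this carefully, as the two sign choices (in the Chow formula and in the weight--Hessian relation) must be made consistently for the ``$W^+(Z)$ is open'' step to hold. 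If the signs do not align, $W^+(Z)$ is only $(\dim Z + 1)$-dimensional rather than open, and you would need instead the observation that the normal-to-$\X_0$ weight is $+1$, so $W^+(Z)$ still surjects onto a neighbourhood of $0\in\C$ and hence meets $\X_1^T$. Either way the strategy can be salvaged, but as written the argument does not match the paper's conventions and the key step is not justified.
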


These are all the tools needed to prove Theorem \ref{bodymain}. In reading the proof below, it is enlightening to first understand the proof when $\X$ and $C$ are smooth, which significantly simplifies the argument.

\begin{proof}[Proof of Theorem \ref{bodymain}]  We first consider some generalities on test configurations and blowups. Let $(\scX,\scA)$ be a test configuration, and let $p\in X$ be a torus invariant point. Our goal will be to construct auxiliary test configurations from this data, and to work out their numerical invariants. 

Let $\scS$ be an equivariant resolution of singularities of $\X$ such that the proper transform $\hat C$ of $C$ in $\scS$ is smooth and $\X_0$ is a simple normal crossings divisor. By definition $\DF(\scX,\scA) = \DF(\scY,\scA)$. Let $\scB\to\scS$ be the blow-up of $\hat C$. Denote by $E_1$ the exceptional divisor of $\scS\to\scX$, and $E_2$ the exceptional divisor of $\scB\to\scP$. We have the diagram: \[
\begin{tikzcd}
\B \arrow[swap]{d}{} \arrow{dr}{} &  \\
\scS \arrow{r}{} & \X
\end{tikzcd}
\]

The class $\scA-\epsilon^{2n} [E_1]$ is relatively K\"ahler on $\scS$, hence $(\scS,\scA-\epsilon^{2n} [E_1])$ is a test configuration for $(X,[\omega])$, as $E_1$ is supported on the central fibre. Similarly $(\scB,\scA-\epsilon [E_2]-\epsilon^{2n} [E_1])$ is a test configuration for $(Bl_pX, \pi^*[\omega]-\epsilon [E])$, and an analogous formula holds for the various inner products.

We now compare the various numerical invariants. First of all we have \begin{equation}\label{DFonResolutionPert}\DF(\scS,\scA-\epsilon^{2n} [E_1]) = \DF(\scX,\scA)+O(\epsilon^{2n}).\end{equation} Similarly we have \begin{equation}\label{perturb-the-norm} \langle \alpha,\beta_i\rangle_{(\scS,\scA-\epsilon^{2n} [E_1]} =  \langle \alpha,\beta_i\rangle_{(\scX,\scA)} + O(\epsilon^{2n}),\end{equation} and the same holds for the other inner products and also the Chow weight.

Now Proposition \ref{DFcomparison}, together with equation (\ref{DFonResolutionPert}) together with the fact that the Chow weight can be computed on $\scS$ \cite[Lemma 5.29]{DR}, implies that $$\DF(\scB,\scA-\epsilon [E_2]-\epsilon^{2n} [E_1]) = \DF(\scX,\scA) - n(n-1)\epsilon^{n-1}Ch_p(\scX,\scA) + O(\epsilon^n).$$ But then Proposition \ref{norm-ip-pert} together with equation (\ref{perturb-the-norm}) give that $$ \langle \alpha,\beta_i\rangle_{(\scB,\scA-\epsilon [E_2]-\epsilon^{2n} [E_1])} = \langle \alpha,\beta_i\rangle_{(\X,\scA)} + O(\epsilon^n),$$ and the corresponding results hold for the  other inner products.

Now we can complete the proof using the above. Suppose $(X,[\omega])$ is strictly relatively K-semistable. This means there exists a test configuration $(\scX,\scA)$ with $\DF_T(\scX,\scA) = 0$ but with $\|(\scX,\scA)\|_m>0.$ Using Proposition \ref{destabilisingpoint}, let $p\in X$ be a torus invariant point with $Ch_p(\scX,\scA)>0$. Then $(\scB,\scA-\epsilon [E_2]-\epsilon^{2n} [E_1])$ is a test configuration for $(Bl_pX, \pi^*[\omega]-\epsilon [E])$. By the above results, as well as Remark \ref{szk-rational}, we now have the key equation $$\DF_T(\scB,\scA-\epsilon [E_2]-\epsilon^{2n} [E_1])<0.$$ Hence if $(X,[\omega])$ is strictly relatively K-semistable, there exists a torus invariant point $p\in X$ such that $(Bl_pX, \pi^*[\omega]-\epsilon [E])$ is relatively K-unstable.

By assumption $(X,[\omega])$ admits an extremal metric. Moreover, for all torus invariant $p\in X$, by Theorem \ref{APS} the blowup $(Bl_pX, \pi^*[\omega]-\epsilon [E])$ admits an extremal metric. Hence for each torus invariant point $p\in X$, the blowup $(Bl_pX, \pi^*[\omega]-\epsilon [E])$ is also relatively K-semistable. Choosing the point $p$ that makes $(Bl_pX, \pi^*[\omega]-\epsilon [E])$ relatively K-unstable gives a contradiction, hence  $(X,[\omega])$ is relatively K-stable as claimed.

\end{proof}

\begin{remark} When $\X$ is projective, Stoppa-Sz\'ekelyhidi are able to work directly on $\X$ rather than passing to a resolution of singularities, which simplifies the above argument \cite[Theorem 4]{StSz}. As our definition of the Donaldson-Futaki invariant requires working on a smooth total space, this part of our argument is necessarily quite different. Clearly the above proof simplifies somewhat when $\X$ and $C$ are already smooth. \end{remark}
\subsection{Further details}\label{ECW}

We first prove Proposition \ref{destabilisingpoint}. 

\begin{proposition}\label{bodylemmachow} Suppose $\|(\scX,\scA)\|_2>0$. Then there exists a torus invariant $p\in X$ such that $Ch_p(\scX,\scA)>0$. \end{proposition}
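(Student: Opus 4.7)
The plan is to first give a hamiltonian reformulation of the Chow weight, and then locate a torus-invariant destabiliser via the joint moment polytope of the $T\times\langle\alpha\rangle_{S^1}$-action on $(\X_0,\co_0)$.

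First, I would show that, for any $p\in X$,
\[
Ch_p(\X,\scA)\;=\;\hat h_\alpha-h_\alpha(\tilde p),
\]
where $\tilde p:=\lim_{t\to 0}\alpha(t)\cdot p\in\X_0$ is the specialization of $p$. This is the analogue for $Ch_p$ of the hamiltonian reformulations of $\DF$ and the $L^p$-norm established in Section \ref{relative-projective-sec}: pushing $\co^{n+1}/(n+1)!$ forward to $\pr^1$ identifies $\scA^{n+1}/((n+1)[\omega]^n)$ with $\hat h_\alpha$, while restricting $\co$ to the compactified orbit closure $\bar C\cong\pr^1$ and using that the action at $\infty$ is trivial gives $\int_{\bar C}\co=h_\alpha(\tilde p)$. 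Singular $\X$ are handled by a $T$-equivariant resolution together with the perturbation argument of Proposition \ref{perturbation}.

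Next, let $P\subset\mathfrak t^*\times\R$ be the joint moment polytope of the $T\times\langle\alpha\rangle_{S^1}$-action on $\X_0$, with projection $P_X\subset\mathfrak t^*$, and consider its lower envelope $L(v):=\min\{r:(v,r)\in P\}$, a convex piecewise linear function on $P_X$. The key observation is that every vertex of $P$ corresponds to a component of $\X_0^{T,\alpha}\subset\X_0^T$, whose $\mu_T$-image lies in the finite set $S:=\mu_T(\X_0^T)=\mu_T(X^T)$ (the last equality follows from deformation-invariance of the moment polytope, after a $T$-equivariant smoothing if necessary). Consequently every piecewise-linear vertex of $L$ lies in $S$, and on each cell of the subdivision of $P_X$ determined by $L$ the function is linear with all vertices in $S$. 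Since a linear function on a convex polytope attains its minimum at a vertex,
\[
\min_{P_X} L\;=\;\min_{w\in S}L(w).
\]
The hypothesis $\|(\X,\scA)\|_2>0$ forces $h_\alpha$ to be non-constant on $\X_0$, so $\min_{\X_0}h_\alpha<\hat h_\alpha$, and there exists $w^*\in S$ with $L(w^*)<\hat h_\alpha$.

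Finally, let $Z_{w^*}\subset X^T$ be the connected $T$-fixed component with $\mu_T(Z_{w^*})=w^*$, and consider the $\alpha$-invariant slice $W:=\mu_T^{-1}(w^*)\cap\X$, whose generic fibre is $Z_{w^*}$. By the Bialynicki--Birula description of the $\alpha$-flow on $W$, a generic $p\in Z_{w^*}$ specializes to the attracting $\alpha$-fixed component of $W\cap\X_0$, whose hamiltonian value equals $L(w^*)$. Combined with the hamiltonian formula from the first step, this yields
\[
Ch_p(\X,\scA)\;=\;\hat h_\alpha-L(w^*)\;>\;0,
\]
producing the desired torus-invariant destabilising point. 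The main technical difficulty in the plan is the first step, namely the hamiltonian formula for $Ch_p$ in the possibly singular K\"ahler setting; this is handled by the same $T$-equivariant resolution and perturbation techniques already used to identify $\DF$ and $\|(\X,\scA)\|_p$ hamiltonianly earlier in the paper.
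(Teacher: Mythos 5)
Your route is genuinely different from the paper's, and it is where it departs that the gaps appear. The paper does not try to locate the invariant point directly: it takes the non-equivariant existence of a destabilising point from \cite[Proposition 5.5]{DR} as given, rewrites the Chow weight hamiltonianly as $Ch_p(\X,\scA)=h_\alpha\bigl(\lim_{t\to 0}\alpha(t)\cdot p\bigr)$ after normalising $\hat h_\alpha=0$ (\cite[Lemma 5.17]{DR} -- note this is the opposite sign to your formula), and then replaces $p$ by $\lim_{s\to 0}\beta_i(s)\cdot p$ for each generator $\beta_i$ of the maximal torus; since the two limits commute and $h_\alpha$ is constant on connected components of the $\alpha$-fixed locus, the Chow weight is unchanged at each step, and after finitely many steps the point is torus invariant. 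Your first step (the hamiltonian formula for $Ch_p$ on a possibly singular total space) is fine and is exactly what the paper quotes; but your replacement for the rest -- the joint moment polytope of the $T$- and $\alpha$-actions on $\X_0$ and its lower envelope -- does not close.

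Concretely: (1) the equality $\mu_T(\X_0^T)=\mu_T(X^T)$ is not justified. There is no reason $\X_0$ admits a $T$-equivariant smoothing, and along the only deformation available, namely $\X\to\C$ itself, fixed loci are only semicontinuous: $\X_0^T$ can contain components, with new moment values, that are not limits of $X^T$. In particular your minimising value $w^*$ need not be the image of any fixed component $Z_{w^*}\subset X^T$, and then the construction produces no point of $X$ at all. (2) The slice $W=\mu_T^{-1}(w^*)\cap\X$ is a real level set of the moment map, not an analytic subspace, so Bialynicki--Birula theory does not apply to it; moreover its general fibre is $\mu_T^{-1}(w^*)\cap X$, which is much larger than $Z_{w^*}$ unless $w^*$ is a vertex of the $T$-moment polytope of $X$, which your choice of $w^*$ (a breakpoint of the lower envelope) does not guarantee. (3) Even granting a component $Z_{w^*}\subset X^T$ mapping to $w^*$, the $\alpha$-specialisation of a generic $p\in Z_{w^*}$ lands on one particular fixed component of $\X_0$, namely the one attached to the open stratum of the closure $\overline{\alpha(\C^*)\cdot Z_{w^*}}$, and there is no argument that its hamiltonian value equals $L(w^*)$: that envelope value is an infimum over the entire fibre of the joint moment image of $\X_0$ over $w^*$, which may be realised on irreducible components of $\X_0$ or strata not contained in this closure, so the concluding inequality $Ch_p=\hat h_\alpha-L(w^*)>0$ is not established. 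Two further points: the sign discrepancy with \cite[Lemma 5.17]{DR} is harmless by symmetry, but it decides whether you need the lower or the upper envelope, i.e.\ whether generic $\alpha$-limits even move in the helpful direction; and the convexity input (vertices of the joint image being images of fixed components) is invoked on a singular, reducible, possibly non-reduced $\X_0$, where it must first be justified on resolutions or normalisations of the components. By contrast, the paper's flowing argument needs none of this structure on $\X_0$, at the cost of relying on the non-equivariant input from \cite{DR}.
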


\begin{proof} We take for granted \cite[Proposition 5.5]{DR}, which proves the existence of a point $p$ with $Ch_p(\scX,\scA)>0$ provided $\|(\scX,\scA)\|_2>0$, which is not necessarily torus invariant. 

For convenience we normalise the hamiltonian such that $\hat h_{\alpha} = 0$ (this involves changing the K\"ahler class $\scA$ but none of the relevant quantities). It follows from \cite[Lemma 5.17]{DR} that we have $$Ch_p(\scX,\scA) = h_{\alpha}\left(\lim_{t\to 0}\alpha(t).p\right),$$ i.e. the Chow weight is just the value of the hamiltonian at the specialisation of $p$. Pick $p$ such that $Ch_p(\scX,\scA)>0$. Take an orthogonal basis of generators $\beta_i$ of a maximal torus. As $\alpha$ and $\beta_i$ commute, $h_{\alpha}$ is invariant under $\beta_i$. By commutation we also have  $$\lim_{t\to 0} \alpha(t)\left(\lim_{s\to 0} \beta_i(s).p\right) = \lim_{t\to 0} \beta_i(t)\left(\lim_{s\to 0} \alpha(s).p\right).$$ This implies \begin{align*}Ch_p(\scX,\scA) &= h_{\alpha}\left(\lim_{t\to 0}\alpha(t).p\right), \\ & = h_{\alpha}\left(\lim_{t\to 0} \alpha(t)\left(\lim_{s\to 0} \beta_i(s).p\right)\right), \\ &=Ch_{\lim_{s\to 0}\beta_i(s).p}(\scX,\scA).\end{align*} Repeating this for each $i$ gives the result.\end{proof}

One can also give a proof of Proposition \ref{bodylemmachow} using intersection theory, again relying on \cite[Proposition 5.5]{DR} to produce a non-torus invariant destabilising point.  Then the limit $\lim_{t\to 0}\beta_i(t).C$ is a $\beta_i$-invariant curve. Doing this successively we get a curve $\hat C$ which is torus invariant and $\alpha$-invariant. Let $p = \hat C \cap \X_1$, which is a single torus invariant point. Since the action of $\beta$ on $\X$ restricts to the usual action on each fibre $\X_t$, we have that $\overline{\C^*.p}=\hat C$. One can assume $\hat C$ is smooth (and hence isomorphic to $\pr^1$) by blowing up if necessary. Thus it suffices to show that $\int_{\hat C} \scA = \int_C \scA$. This follows when $\scA$ is rational since the family of curves $\beta_i(t).C$ is flat and hence the degree of line bundles is preserved, and hence follows for general $\scA$ by an approximation argument, using that $\Pic(\pr^1)$ is one dimensional.

 \begin{remark} Stoppa-Sz\'ekelyhidi prove a result corresponding to Proposition \ref{bodylemmachow} in the projective case using finite dimensional geometric invariant theory \cite{JS,StSz}. As this is not available to us, our argument is very different. \end{remark} 
We now turn to the calculation of the norms and inner products on blowups. For this we rely on an observation of Sz\'ekelyhidi \cite[Proposition 37]{GS6}, that what we require holds when $\X_0$ is smooth, so that one is blowing up a (smooth) point on $\X_0$. Sz\'ekelyhidi's observation is that the computation is local around the point being blown up, hence follows from the corresponding result for projective space (or any smooth projective variety), which can be proven relatively easily using the algebro-geometric methods of \cite{StSz}.

\begin{proposition}Suppose $\X_0$ is a simple normal crossings divisor. The inner products satisfies $$\langle \alpha,\beta_i\rangle_{(\scY,\scA-\epsilon [E])} = \langle\alpha,\beta_i \rangle_{(\scX,\scA)} +  O(\epsilon^n).$$ A similar formula holds for $\langle \beta_i, \beta_i\rangle.$ \end{proposition}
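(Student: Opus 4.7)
The plan is to reduce the assertion to Sz\'ekelyhidi's local computation on the blowup of a smooth point, cited just before the statement as \cite[Proposition 37]{GS6}. Starting from the integral representation
\[
\langle \alpha, \beta_i \rangle_{(\scY,\scA-\epsilon[E])} = \int_{\scY_0} (h_{\alpha,\epsilon} - \hat h_{\alpha,\epsilon})(h_{\beta_i,\epsilon} - \hat h_{\beta_i,\epsilon})(f^*\co_0 - \epsilon\nu_0)^n,
\]
where $f\colon \scY\to\scX$ is the blowup, $\nu\in[E]$ is a smooth $S^1$-invariant representative, and the hamiltonians on $\scY$ with respect to $f^*\co-\epsilon\nu$ have the form $h_{\alpha,\epsilon} = f^*h_\alpha - \epsilon h_{\alpha,\nu}$ (and analogously for $\beta_i$), I first want to pass to the SNC decomposition $\X_0 = \sum_j a_j \X_{0,j}$ and use the additivity over components that was already exploited in the proof of Theorem \ref{geodesic-test-config-norm}. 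On each smooth component the integral reduces to an integral on $\X_{0,j}$ with respect to $\co_0|_{\X_{0,j}}$, perturbed by the pullback through $f$.

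Since $C = \overline{\C^*.p}$ is one-dimensional and transverse (generically) to the central fibre, its intersection $C\cap \X_0$ is a finite collection of torus-fixed points, and the proper transform $\widetilde{\X_{0,j}}$ of $\X_{0,j}$ in $\scY$ is either equal to $\X_{0,j}$ (when $C\cap \X_{0,j}=\emptyset$) or the blowup of $\X_{0,j}$ at the finitely many points of $C\cap \X_{0,j}$, which are smooth points of $\X_{0,j}$ by the SNC assumption. In the first case nothing changes. In the second case, the change in the integral is entirely local around the blown-up points, so Sz\'ekelyhidi's observation \cite[Proposition 37]{GS6} for the blowup of a smooth point on a smooth K\"ahler manifold applies verbatim and yields a correction of order $\epsilon^n$ for each component. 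Summing the contributions of the components (with their multiplicities $a_j$) gives the stated $O(\epsilon^n)$ bound for $\langle\alpha,\beta_i\rangle$.

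The argument for $\langle \beta_i, \beta_i\rangle$ is identical: one just takes $\alpha = \beta_i$ in the above, noting that since $\beta_i$ is a vertical automorphism the hamiltonian $h_{\beta_i}$ and its perturbation $h_{\beta_i,\nu}$ behave in exactly the same way on $\scY_0$. Power-counting in $\epsilon$ on the blowup of a point (where the exceptional divisor $E_x$ of $\widetilde{\X_{0,j}}\to \X_{0,j}$ satisfies $E_x^k\cdot(\text{pullbacks})^{n-k}=0$ for $k<n$) is what forces the exponent $n$ in the error, matching the fact that the first nontrivial self-intersection of the exceptional divisor $E$ against $n$ pullback classes from $\X$ requires the top power.

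The main obstacle I anticipate is not in the local computation itself, which is purely a matter of expanding the integrand, but in the passage from the smooth-point setting to the SNC setting: one must verify that the hamiltonians on $\scY_0$ restrict component-wise to the expected perturbed hamiltonians on each $\widetilde{\X_{0,j}}$, and that the normalising constants $\hat h_{\alpha,\epsilon}$ themselves change only by $O(\epsilon^n)$. The latter follows by applying the same additivity/projection formula argument to the constants (integrating $h_{\alpha,\epsilon}(f^*\co_0-\epsilon\nu_0)^n$ against the constant function $1$), so both the numerator and denominator in the averaged hamiltonians pick up only $O(\epsilon^n)$ corrections and the $\hat{\phantom{h}}$ subtraction does not spoil the estimate.
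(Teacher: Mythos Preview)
Your strategy is essentially the paper's: decompose $\X_0$ as a simple normal crossings cycle $\sum_j a_j\X_{0,j}$, compute the inner product component by component, and on each component invoke Sz\'ekelyhidi's local result \cite[Proposition 37]{GS6}. The one genuine gap is that you tacitly identify $\Y_0$ with the union of the proper transforms $\widetilde{\X_{0,j}}$. This is false in general. The paper gives the model example: if $\X_0$ is two lines meeting at the specialisation point $q$, then $\Y_0$ acquires an \emph{extra} component (the exceptional $\pr^1$ over $q$) in addition to the two proper transforms. More generally, whenever $q$ lies on the intersection of several $\X_{0,j}$, the exceptional fibre $f^{-1}(q)$ contributes new components of $\Y_0$ that are not proper transforms of anything in $\X_0$.

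You therefore need an argument that these extra components do not contribute to the inner product. The paper's point is that on any such extra component the $\C^*$-action is trivial (it fixes every point), so the hamiltonian is constant there and the integrand $(h_\alpha-\hat h_\alpha)(h_{\beta_i}-\hat h_{\beta_i})$ vanishes after subtracting the averages, contributing only to the $O(\epsilon^n)$ error. Once you add this observation, your argument matches the paper's. Two minor remarks: $C\cap\X_0$ is a single point $q$ (the specialisation of $p$), not a finite set; and your discussion of the normalising constants $\hat h_{\alpha,\epsilon}$ is more careful than the paper's, which simply absorbs this into the appeal to Sz\'ekelyhidi's computation.
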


\begin{proof} The proof for the inner product is the same as for the norm, so we work with the norm for notational simplicity. Let $q\in \X_0$ be the specialisation of $p\in X\cong\X_1$. The norm is calculated as an integral over $\X_0$, so let $\X = \sum a_j\X_{0,j}$ as a cycle, so that $\X_{0,j}$ are compact K\"ahler manifolds. 

Note that in general the blow-up of $\X_0$ at the point $q$ is not necessarily equal to $\Y_0$, for example if $\X_0$ is the intersection of two lines and $q$ is the point of intersection, $\Y_0$ will have an extra component. However, each component of $\Y_0$ is either the blow-up of a component of $\X_0$ at $q$ or a component $\Y_{0,j}$ such that the $\C^*$-action fixes each point on $\Y_{0,j}$. Hence the hamiltonian is trivial on such $\Y_{0,j}$, and so these do not affect the norm. 

Setting $\Y_{0,j}$ to be a component of $\Y_0$ which is the blow-up of $\X_{0,j}$ at $q$, it suffices to compare $$\int_{\X_{0,j}} |h_{\alpha} - \hat h_{\alpha}|^2\co_0^n$$ and $$\int_{\Y_{0,j}} |h_{\Y_{0,j},\alpha} - \hat h_{\Y_{0,j},\alpha}|^2(\co_0-\epsilon \nu_0)^n,$$ where we have picked $\nu\in [E]$ such that $\co-\epsilon \nu$ is relatively positive and set $h_{Y,\alpha}$ to be the induced hamiltonian. But the required comparison follows directly from the observation of Sz\'ekelyhidi \cite[Proposition 37]{GS6}.
\end{proof}

\begin{remark}\label{intersections-analytic} It seems very likely that one could prove this directly, without appealing to any projective results, by using equivariant Chern-Weil theory in the style of Atiyah-Bott \cite[Section 6]{AB2}. In the projective case Donaldson shows that from the $n$-dimensional polarised scheme $(\X_0,\scL_0)$ admitting two $\C^*$-actions, using a fibre bundle construction one can frm an $n+2$ dimensional scheme $(\scP,\scH)$ such that the inner product calculated on $(\X_0,\scL_0)$ becomes an intersection number on $(\scP,\scH)$ \cite[Section 5.1]{SD}. Mirroring this construction in our K\"ahler setting one can produce an analytic space, say $\scP$, with a $(1,1)$-form $\zeta$ such that $$\int_{\scP} \zeta^{n+2} = \int_{\X_0} h_\alpha h_{\beta} \omega_0^n,$$ so the norm and inner product are computed as integrals on $\scP$. The key point in the projective case is that $\zeta$ is closed, which Donaldson shows by demonstrating it is the curvature of a metric on the corresponding line bundle. Suppose for the moment one knew in the K\"ahler setting that $\zeta$ is closed. Then with $\Y_0$ the central fibre of the blow-up test configuration, the comparison of the inner products and norms simply becomes a comparison of intersection numbers on $\scP$ and the corresponding analytic space for $\Y_0$, which would then follow by standard arguments involving the Poincar\'e-Lelong formula. Nevertheless the simplest proof of this is the one we presented above, borrowing the projective result.

 \end{remark}

\bibliography{relative}
\bibliographystyle{amsplain}

\vspace{4mm}

\end{document}